\theoremstyle{plain}
\newtheorem{theorem}{Theorem}[section]
\newtheorem{proposition}[theorem]{Proposition}
\newtheorem{lemma}[theorem]{Lemma}
\newtheorem{corollary}[theorem]{Corollary}
\newenvironment{customthm}[1]
{\assumcustomthm}
{\endassumcustomthm}
\theoremstyle{remark}
\newtheorem{remark}{Remark}[section]
\newtheorem{example}{Example}
\journal{Stochastic Processes and their Applications}
\begin{document}

\begin{frontmatter}



\title{Uniform in time propagation of chaos for a Moran model}


\author[label1]{Bertrand Cloez}

\affiliation[label1]{organization={MISTEA, Univ Montpellier, INRAE, Institut Agro},
	city={Montpellier},
	postcode={34060}, 
	country={France}}

\author[label4,label2,label3]{Josu\'e Corujo\corref{cor1}}
\cortext[cor1]{Corresponding author.}

\affiliation[label4]{o={IRMA, Universit\'e de Strasbourg},
	c={Strasbourg},
	p={67084},
	cy={France}}
 
\affiliation[label2]{organization={CEREMADE, Universit\'e Paris-Dauphine, Universit\'e PSL, CNRS},
             city={Paris},
             postcode={75016},
             country={France}}

\affiliation[label3]{organization={Institut de Math\'ematiques de Toulouse, Universit\'e de Toulouse, Institut National des Sciences Appliqu\'ees},
             city={Toulouse},
             postcode={31077},
             country={France}}

\begin{abstract}
This article studies the limit of the empirical distribution induced by a mutation-selection multi-allelic Moran model. 
Our results include a uniform in time bound for the propagation of chaos in $\mathbb{L}^p$ of order $\sqrt{N}$, and the proof of the asymptotic normality with zero mean and explicit variance, when the number of individuals tend towards infinity, for the approximation error between the empirical distribution and its limit.
Additionally, we explore the interpretation of this Moran model as a particle process whose empirical probability measure approximates a quasi-stationary distribution, in the same spirit as the Fleming\,--\,Viot particle systems.
\end{abstract}



\begin{keyword}

multi-allelic Moran model, Feynman\,--\,Kac formulae \sep propagation of chaos \sep quasi-stationary distribution \sep Fleming\,--\,Viot particle system \sep asymptotic normality



\MSC[2020] 60J28 \sep 60F25 (Primary) \sep 92D10 \sep 62L20 (Secondary)

\end{keyword}

\end{frontmatter}

\section{Introduction and main results}

This paper is devoted to the study of a mutation-selection multi-allelic Moran model consisting on $N \in \mathbb{N}$ individuals, which can be of different allelic types belonging to a countable set $E$, equipped with the discrete metric.
The state space of the Moran model is the $N$ simplex
\[
\mathcal{E}_N := \left\{ \eta: E \rightarrow \mathbb{N} \mathrel{\Big|} \sum\limits_{x \in E} \eta(x) = N \right\}.
\]
The empirical distribution induced by $\eta \in \mathcal{E}_N$ is defined by
\[
m(\eta) = \sum_{x \in E} \frac{\eta(x)}{N} \delta_x \in \mathcal{M}_{1}(E),
\]
where $\mathcal{M}_{1}(E)$ is the set of probability measures on $E$. 
Let $Q$ be the generator of a continuous-time, non-explosive, irreducible Markov chain, and consider some rates $V_{\mu}(x,y) \ge 0$, for every $x \neq y \in E$ and $\mu \in \mathcal{M}_1(E)$.

The multi-allelic Moran model is a continuous-time Markov chain evolving on $\mathcal{E}_N$.
The process is at $\eta \in \mathcal{E}_N$ if there are $\eta(x)$ individuals of type $x$, for all $x \in E$.
Between reproduction events, the $N$ individuals evolve as independent copies of the mutation process generated by $Q = (Q_{x,y})_{x,y \in E}$.
In this sense we call $Q_{x,y}$, for $x,y \in E$, the \emph{mutation rates}.
Reproduction events consist of the death of an individual of type $x$, which is then removed from the population, and the reproduction of an individual of type $y$, which add an $y$ individual to the population. 
This happens at the rate $\eta(y)/N \cdot V_{m(\eta)}(x, y)$. 
Hence, the transition rate from $\eta \in \mathcal{E}_N$, with $\eta(x) > 0$, to $\eta - \mathbf{e}_x + \mathbf{e}_y$ is
\[
\eta(x) \left( Q_{x,y} + \frac{\eta(y)}{N} V_{m(\eta)}(x,y) \right),
\]
for every $x \neq y \in E$, where $\eta - \mathbf{e}_x + \mathbf{e}_y$ is the element in $\mathcal{E}_{N}$ satisfying
\[
\big(\eta - \mathbf{e}_x + \mathbf{e}_y\big)(z) =
\left\{
\begin{array}{ccl}
	\eta(z) & \text{ if } & z \notin \{x,y\},\\
	\eta(x)-1 & \text{ if } & z = x,\\
	\eta(y)+1 & \text{ if } & z = y.
\end{array}
\right.
\]
We will further detail particular examples but, for the moment, let us see that when $V_{m(\eta)}(x, y)$ is constant, each individual dies at the same rate and the parent is chosen uniformly at random over the individuals that are present in the population (this explains the term $\eta(y)/N$ in the transition rate). 
We can also interpret this rate by the opposed point of view: each individual reproduces at a constant rate, and the dying individual is chosen uniformly at random. 
This is often called neutral selection in ecology literature, but our models allow choosing various non constant $V_{m(\eta)}(x, y)$.
In this sense, we call the rates $V_\mu(x,y)$, for $x,y \in E$ and $\mu \in \mathcal{M}_1(E)$, the \emph{selection rates}.

Note that the reproduction dynamics depends in general on both the types of the parent and the offspring, and may also depend on the empirical distribution induced by the configuration of the population at the current time, in a sense that we will clarify further along in Assumptions \ref{assump:gral_slection rate} and \ref{assump:additive+symmetric}.
The generator of the Moran model is denoted $\mathcal{Q} := \mathcal{Q}^{\mathrm{mut}} + \mathcal{Q}^{\mathrm{sel}}$, where $\mathcal{Q}^{\mathrm{mut}}$ and $\mathcal{Q}^{\mathrm{sel}}$ act on every function $f \in \mathcal{B}_b(\mathcal{E}_{N})$ as follows
\begin{align*}
	(\mathcal{Q}^{\mathrm{mut}} f)(\eta) &= \sum_{x, y \in E} \eta(x) Q_{x,y} [f(\eta - \mathbf{e}_x + \mathbf{e}_y) - f(\eta)],\\
	(\mathcal{Q}^{\mathrm{sel}} f)(\eta) &= \frac{1}{N} \sum_{x, y  \in E} \eta(x) \eta(y) V_{m(\eta)}(x,y) [f(\eta - \mathbf{e}_x + \mathbf{e}_y) - f(\eta)],
\end{align*}
for every $\eta \in \mathcal{E}_N$.
Throughout the paper, the following boundedness condition holds:
\begin{equation}\label{eq:bound_condition}
	\|V\| := \sup_{\mu \in \mathcal{M}_1(E)} \sup_{x,y \in E} V_\mu(x,y) < \infty.
\end{equation}
Note that the non-explosion of the process generated by $Q$ and the bound condition \eqref{eq:bound_condition} let out the possibility of an infinite number of jumps in finite time.
Thus, the process generated by $\mathcal{Q}$ is well-defined for all $t \ge 0$.

This Moran model is an extension, for $K > 2$, of the model studied by \cite{cordero_deterministic_2017}.
In general, when generalising the Moran model for more than two allelic types, the selection rates are taken depending only on the children type, i.e.\ $V_{\mu}(x,y) = V^{\mathrm{b}}(y)$, for all $x,y \in E$ and $\mu \in \mathcal{M}_1(E)$, which is called \emph{selection at birth} or \emph{fecundity selection} \cite{durrett_probability_2008,muirhead_modeling_2009,etheridge_mathematical_2011}.
Moreover, in biological applications it has been also considered models with \emph{selection at death} or \emph{viability selection}, when the selection rates only depend on the parent type, i.e.\ $V_\mu(x,y) = V^{\mathrm{d}}(x)$ \cite{muirhead_modeling_2009}, for all $x,y \in E$ and $\mu \in \mathcal{M}_1(E)$.
However, the importance of this last model is beyond its biological interpretations: this process is also called \emph{Fleming\,--\,Viot particle process}, which is an interacting particle process 
intended for the approximation of a \emph{quasi-stationary distribution} (QSD) of an absorbing Markov chain conditioned on non-absorption.
These particle processes have attracted lots of attention in recent years.
See for instance
\cite{MR1956078,villemonais_general_2014,Cerou2020,zbMATH07298000} for general state spaces,
\cite{ferrari_quasi_2007,MR3156964,Galton-Watson_FV_2016,cloez_quantitative_2016} for countable state spaces, and even 
\cite{asselah_quasistationary_2011,Lelievre2018} for finite state spaces.

\subsection*{Structure of the paper} 
The rest of the paper is organised as follows.
In Section \ref{sec:main_results} we state the main results of the paper and comment some of their consequences.
In Section \ref{sec:examples} we consider several examples of mutation and selection rates satisfying Assumptions \ref{assump:gral_slection rate} and \ref{assump:additive+symmetric}, and such that the exponential uniform in time convergence of the normalised semigroups, namely Assumption \ref{assump:ergod_normalised}, also hold.
We end this section with a discussion about some of the possible extensions.
Finally, in Section \ref{sec:proofs}, we prove the main results, and other results and proofs are deferred to the appendices.

\subsection{Main results}\label{sec:main_results}

Let us get some insight into the limit of the empirical measure induced by this particle process when the number of particles tends towards infinity.
Let us denote by $(\eta_t)_{t \ge 0}$ the continuous-time Markov chain on $\mathcal{E}_N$, generated by $\mathcal{Q}$.
Although the process generated by $\mathcal{Q}$ clearly depends on $N$ and a better notation would be $(\eta_t^{(N)})_{t \ge 0}$, we keep this dependence implicit for the sake of simplicity.
By the Kolmogorov equation we know that
\(
\partial_t \mathbb{E}_{\eta} [m_x(\eta_t)] = \mathbb{E}_{\eta} \left[ (\mathcal{Q} \, m_x)(\eta_t) \right],
\)
where $m_x$ stands for the empirical distribution induced by $\eta$ on the point $x \in E$, i.e.\
\(
m_x : \eta \mapsto {\eta(x)}/{N}.
\)
Let us thus compute $\mathcal{Q} \, m_x$. 
On the one hand, it is easy to get
\begin{equation}\label{eq:gen_mut}
	\big( \mathcal{Q}^{\mathrm{mut}} m_x \big) (\eta) = \sum_{y \in E} Q_{x,y} m_{y}(\eta),	
\end{equation}
for every $x \in E$, for all $\eta \in \mathcal{E}_N$.
On the other hand,
\begin{align}
	\big( \mathcal{Q}^{\mathrm{sel}} m_x \big) (\eta) 
	&= -m_x(\eta) \sum_{y \in E} m_{y}(\eta) [V_{m(\eta)}(x,y) - V_{m(\eta)}(y,x)]. \label{eq:gen_sel}
\end{align}
Finally, we get
\[
\partial_t \mathbb{E}_{\eta} [m_x(\eta_t)] = \sum_{y \in E} Q_{x,y} \mathbb{E}_{\eta}[m_y(\eta_t)] - \sum_{y \in E} [V_{m(\eta_t)}(x,y) - V_{m(\eta_t)}(y,x)] \mathbb{E}_{\eta}[m_x(\eta_t) m_y(\eta_t)].
\]
When the number of individuals $N$ is large, we expect the Moran process to exhibit a \emph{propagation of chaos phenomenon} and thus the empirical distribution induced by the process approximates the solution of the following nonlinear system of ordinary differential equations:
\begin{equation}\label{eq:EDO_without_symmetric}
    \partial_t \gamma_t (x) = \sum_{y \in E} Q_{x,y} \gamma_t(y) - \sum_{y \in E} [V_{\gamma_t}(x,y) - V_{\gamma_t}(y,x)] \gamma_t(x) \gamma_t(y),
\end{equation}
for all $x \in E$.
For every function $\phi$ on $E$ we thus get the nonlinear differential equation
\begin{align}
	\partial_t \gamma_t(\phi) 
	&= \gamma_t( Q_{\gamma_t} \phi ), \label{eq:preODE}
\end{align}
where $Q_{\gamma} := Q + \Pi_\gamma$ and
\begin{equation*}
	\Pi_\gamma \phi: x \mapsto \sum_{y \in E} \gamma(y) V_{\gamma}(x,y) [\phi(y) - \phi(x)],
\end{equation*}
for every probability distribution $\gamma$ on $E$.

The main results we provide in this article are related to the speed of convergence of $\big(m(\eta_t) \big)_{t \ge 0}$ towards $\big( \gamma_t \big)_{t \ge 0}$ when $N \to \infty$.

\subsubsection{Propagation of chaos with general selection rate}

Let $\mathcal{B}_b(E)$ be the set of bounded functions on $E$ for the uniform norm, which is denoted by $\| \cdot \|$ and defined by
\(
\|\phi\| := \sup_{x \in E} |\phi(x)|.
\) 
Let us also denote $\mathcal{B}_1(E) := \{\phi : E \to \mathbb{R}: \; \|\phi\| \le 1\}$.
For two probability distributions  $\mu_1, \mu_2 \in \mathcal{M}_1(E)$ the total variation distance is defined as follows:
\[ 
\| \mu_1 - \mu_2 \|_{\mathrm{TV}} := \sup_{A \subset E} |\mu_1(A) - \mu_2(A)| = \frac{1}{2} \sup_{\phi \in \mathcal{B}_1(E) } \left| \mu_1 (\phi) - \mu_2 (\phi) \right| = \frac{1}{2} \sum_{x \in E} |\mu_1(x) - \mu_2(x)|,
\]	
where $\mu(\phi)$ stands for the mean of $\phi$ with respect to $\mu \in \mathcal{M}_1(E)$.

First, we consider the case where the selection rates satisfy the following hypothesis.
\begin{customthm}{$(\mathrm{G1})$}[General selection rate] \label{assump:gral_slection rate}
	There exist $V^{\mathrm{d}}_i, V^{\mathrm{b}}_i$, for $i \ge 1$, and
	a continuous, nonnegative function $\mu \mapsto V_\mu^{\mathrm{s}}$ from
	$(\mathcal{M}_1(E), \|\cdot\|_{\mathrm{TV}})$ to $(\mathcal{B}_b(E \times E), \|\cdot\|)$
	such that $V_\mu^{\mathrm{s}}$ is symmetric in $E \times E$ for every $\mu$, and
	\begin{equation}\label{eq:Vdecomp}
	    V_\mu(x,y) = \sum_{i \ge 1} V^{\mathrm{d}}_i(x) V^{\mathrm{b}}_i(y) + V_\mu^\mathrm{s}(x,y), 
	\end{equation}
	for all $\mu \in \mathcal{M}_1(E)$ and $x,y \in E$.
	In addition, $	V_\mu - V^\mathrm{s}_\mu \in \mathcal{B}_b(E \times E)$ and 
	\begin{equation}\label{eq:boundednes_condition}
	    \left\{
	\sum_{i \ge 1} |V^{\mathrm{d}}_i - V^{\mathrm{b}}_i|,\;
	\sup_{i \ge 1} V^{\mathrm{d}}_i,\;
	\sup_{i \ge 1} V^{\mathrm{d}}_i
	\right\} \subset \mathcal{B}_b(E).
	\end{equation}
\end{customthm}

\begin{remark}[Decomposition of $V \in \mathcal{B}_b(E \times E)$]
    Since the state space $E$ is countable, any function $V \in \mathcal{B}_b(E \times E)$ can be written as follows
    \[
        V(x,y) = \sum_{z \in E} \pmb{1}_{ \{z\}} (x) V(z,y) 
        = \sum_{z \in E} \pmb{1}_{ \{z\}} (y) V(x,z).
    \]
    Thus, it is always possible to decompose $V$ in a countable sum as in \eqref{eq:Vdecomp}. 
    In particular, Assumption \ref{assump:gral_slection rate} is always satisfied when $E$ is finite. 
    However, when $E$ is infinite such decomposition does not necessarily satisfy the boundedness conditions in Assumption \ref{assump:gral_slection rate}, namely it does not necessarily satisfy
    \[
	\sum_{z \in E} |V(z, \cdot)| \in \mathcal{B}_b(E)	\text{ or } 
	\sum_{z \in E} |V(\cdot, z)| \in \mathcal{B}_b(E).
    \]
    Consider for example the case where $E = \mathbb{N}$ and $V_\mu(x,y) = a_x b_y$, where $(a_x)_{x \in \mathbb{N}}$ and $(b_y)_{x \in \mathbb{N}}$ are bounded and are the general terms of two divergent series.
    Taking $V^\mathrm{s} \equiv 0$,
    $V_z^\mathrm{d}: x \mapsto \pmb{1}_{\{z\}}(x)$ and $V_z^\mathrm{b} : y \mapsto a_z b_y$ (or $V_z^\mathrm{d}: x \mapsto a_x b_z$ and $V_z^\mathrm{b} : y \mapsto \pmb{1}_{\{z\}}(y)$), then condition \eqref{eq:boundednes_condition} is not satisfied.
    However, taking $V_1^\mathrm{d} : x \mapsto a_x$, $V_1^\mathrm{b} : y \mapsto b_y$ and $V_i^\mathrm{d} V_i^\mathrm{b} \equiv 0$, for all $i \ge 2$, one easily checks that Assumption \ref{assump:gral_slection rate} is satisfied.
\end{remark}

\begin{remark}[Rates giving the same mean-field limit]
    Note that, because of \eqref{eq:EDO_without_symmetric}, the mean-field limit is invariant as soon as the same the terms $V_\mu(x,y) - V_{\mu}(y,x)$, for all $x,y \in E$, do not change.
    In particular, the mean-field limit does not depend on the symmetric term in Assumption \ref{assump:additive+symmetric}.
\end{remark}

Now, for every $V \in \mathcal{B}_b(E \times E)$ and $\gamma_0 \in \mathcal{M}_1(E)$, let us define the ``implicit Feynman\,--\,Kac'' flow: \begin{equation}\label{eq:F-Kimplicit}
    \gamma_t^V(\phi) := \mathbb{E}_{\gamma_0}\left[ \phi(X_t) \exp\left\{ \int_0^t \sum_{x \in E} [V(X_s, x) - V(x, X_s)] \gamma_s^V(x) \mathrm{d}s \right\} \right].
\end{equation}
Then, $(\gamma_t^V)_{t \ge 0}$ is a solution of the Cauchy problem
\begin{equation}\label{eq:postODE}
      \partial_t \mu_t(\phi) 
    = \mu_t\big( \widetilde{Q}_{\mu_t} \phi \big), \text{ with } \mu_0(\phi) = \gamma_0(\phi),
\end{equation}
where
\[
    \widetilde{Q}_{\mu} \phi: x \mapsto (Q \phi)(x) + \sum_{y \in E} \mu(y) V(x,y) [\phi(y)-\phi(x)].
\] 
See \cite[p.\ 25]{DelMoral2004} and the references therein for more details on the implicit Feynman\,--\,Kac.
Besides, as suggested in the above-mentioned reference, following the martingale arguments in \cite{del_moral_moran_2000}, one can check that $(\gamma_t^V)_{t \ge 0}$ is in fact the unique solution of \eqref{eq:postODE}.
According to these references, we claim the following result.
\begin{lemma}[Existence of the solution of \eqref{eq:postODE}] \label{assump:gral_slection rate_existence}
	Assume that Assumption \ref{assump:gral_slection rate} is satisfied.
	For every $\mu_0 \in \mathcal{M}_1(E)$, there is a unique solution $(\mu_t)_{t \ge 0}$ of the differential equation \eqref{eq:postODE}, and it is given by \eqref{eq:F-Kimplicit} taking $V = V_\mu - V_\mu^{\mathrm{s}}$.
\end{lemma}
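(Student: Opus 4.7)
The plan is to combine the classical well-posedness theory for measure-valued evolution equations driven by bounded, measure-dependent rates with a Dynkin-formula verification that the implicit Feynman--Kac flow solves \eqref{eq:postODE}.

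\textbf{Reduction to the antisymmetric part.} Since $V_\mu^{\mathrm{s}}(x,y) = V_\mu^{\mathrm{s}}(y,x)$, one has $V_\mu(x,y) - V_\mu(y,x) = \tilde V_\mu(x,y) - \tilde V_\mu(y,x)$ for $\tilde V_\mu := V_\mu - V_\mu^{\mathrm{s}}$. Consequently, both the integrand in the exponent of \eqref{eq:F-Kimplicit} and the drift in \eqref{eq:postODE} (the latter after the $\mu_t \otimes \mu_t$-integration against $V(x,y)[\phi(y)-\phi(x)]$) are unaffected by replacing $V_\mu$ by $\tilde V_\mu$. By Assumption \ref{assump:gral_slection rate}, $\tilde V_\mu \in \mathcal{B}_b(E \times E)$ with a uniform bound in $\mu$, and $\mu \mapsto \tilde V_\mu$ is continuous from $(\mathcal{M}_1(E), \|\cdot\|_{\mathrm{TV}})$ to $(\mathcal{B}_b(E \times E), \|\cdot\|)$.

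\textbf{Picard iteration for the ODE.} Fix $T > 0$ and work on the complete metric space $\mathcal{X}_T := C([0,T]; \mathcal{M}_1(E))$ equipped with the uniform TV distance. For each $\nu \in \mathcal{X}_T$, the operator $\widetilde{Q}_{\nu_t}$ is the generator of a conservative Markov chain on $E$ with rates uniformly bounded by $\|Q\| + 2\|\tilde V\|$, so the linear Cauchy problem $\partial_t \mu_t(\phi) = \mu_t(\widetilde{Q}_{\nu_t} \phi)$ with $\mu_0 = \gamma_0$ admits a unique probability-measure solution, which I denote $\Psi(\nu) \in \mathcal{X}_T$. Using the continuity of $\mu \mapsto \tilde V_\mu$ together with the summable decomposition \eqref{eq:Vdecomp}--\eqref{eq:boundednes_condition}, a Duhamel-plus-Gr\"onwall computation yields
\[
\sup_{t \in [0,T]} \| \Psi(\nu)_t - \Psi(\nu')_t \|_{\mathrm{TV}} \;\le\; \kappa(T) \sup_{t \in [0,T]} \|\nu_t - \nu'_t\|_{\mathrm{TV}},
\]
with $\kappa(T) \to 0$ as $T \to 0$. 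Banach's fixed-point theorem then gives a unique local solution, and the argument extends globally by concatenation since the constants depend only on $\|\tilde V\|$ and not on the initial time or initial condition.

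\textbf{Identification with the Feynman--Kac flow.} Let $(\gamma_t)_{t \ge 0}$ denote the unique solution of \eqref{eq:postODE} just constructed, and set
\[
h_s(x) := \sum_{y \in E}\bigl[\tilde V_{\gamma_s}(x,y) - \tilde V_{\gamma_s}(y,x)\bigr]\gamma_s(y), \qquad M_t := \exp\!\Bigl(\int_0^t h_s(X_s)\,\mathrm{d}s\Bigr),
\]
which is a uniformly bounded multiplicative functional of the mutation process with generator $Q$. Dynkin's formula gives
\[
\partial_t\, \mathbb{E}_{\gamma_0}\!\bigl[\phi(X_t) M_t\bigr] = \mathbb{E}_{\gamma_0}\!\bigl[(Q\phi)(X_t) M_t\bigr] + \mathbb{E}_{\gamma_0}\!\bigl[h_t(X_t)\phi(X_t) M_t\bigr],
\]
and rewriting the right-hand side in terms of $\mu_t(\phi) := \mathbb{E}_{\gamma_0}[\phi(X_t) M_t]$, using the antisymmetric structure of $h$, identifies it with $\mu_t(\widetilde Q_{\gamma_t}\phi)$. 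Thus both $t\mapsto \gamma_t(\phi)$ and $t\mapsto \mathbb{E}_{\gamma_0}[\phi(X_t) M_t]$ solve the \emph{same} linearised equation with time-inhomogeneous generator $\widetilde Q_{\gamma_t}$ and the same initial value $\gamma_0(\phi)$; uniqueness of the linear problem forces them to coincide, which is precisely \eqref{eq:F-Kimplicit}.

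\textbf{Main obstacle.} The crux of the argument is the contraction estimate, which requires uniform control in $\mu$ of the countable sum of rank-one terms $\sum_i V^{\mathrm{d}}_i V^{\mathrm{b}}_i$ (via \eqref{eq:boundednes_condition}) and of the TV-continuity of $\mu \mapsto V^{\mathrm{s}}_\mu$; Assumption \ref{assump:gral_slection rate} has been tailored to deliver exactly these two ingredients. Once this is secured, the remaining steps amount to a routine adaptation of the martingale arguments in \cite{DelMoral2004,del_moral_moran_2000}.
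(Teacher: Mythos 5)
The paper does not actually write out a proof of this lemma: it simply defers to \cite[p.\ 25]{DelMoral2004} and the martingale arguments of \cite{del_moral_moran_2000}, so your attempt can only be measured against the standard argument those references contain. Your overall architecture (Picard iteration for well-posedness of the nonlinear flow, then identification with the Feynman--Kac expectation via uniqueness of a linear problem) is the right one, and the fixed-point step is essentially sound --- modulo two loose ends: you write the jump rates as bounded by $\|Q\|+2\|\tilde V\|$ although $Q$ is only assumed non-explosive, not bounded (the Duhamel argument survives because $\mathrm{e}^{tQ}$ is a TV-contraction and only the perturbation $\Pi_{\nu}$ needs to be bounded), and $\tilde V = V_\mu - V^{\mathrm{s}}_\mu$ is not assumed nonnegative, so the frozen operator $\widetilde Q_{\nu_t}$ need not be a Markov generator and positivity of $\Psi(\nu)$ must be obtained a posteriori (e.g.\ from the Feynman--Kac representation) rather than asserted.

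The genuine gap is in the identification step. Writing $\mu_t(\phi):=\mathbb{E}_{\gamma_0}[\phi(X_t)M_t]$, Dynkin gives $\partial_t\mu_t(\phi)=\mu_t(Q\phi)+\mu_t(h_t\phi)$, and you claim this right-hand side ``identifies with'' $\mu_t(\widetilde Q_{\gamma_t}\phi)=\mu_t(Q\phi)+\sum_{x,y}\mu_t(x)\gamma_t(y)\tilde V(x,y)[\phi(y)-\phi(x)]$. That identity is false off the diagonal: the transposition
\begin{equation*}
\nu\Big(\textstyle\sum_{y}\gamma(y)\tilde V(\cdot,y)[\phi(y)-\phi(\cdot)]\Big)=\nu\big(G^{\gamma}\phi\big),\qquad G^{\gamma}(x)=\sum_{y}\gamma(y)\big[\tilde V(y,x)-\tilde V(x,y)\big],
\end{equation*}
holds only when $\nu=\gamma$ (both slots carry the same measure), which is precisely what you are trying to prove; and even on the diagonal the potential produced is $G^{\gamma}=-h$, the \emph{opposite} sign to the one in your $M_t$ (and, incidentally, to the exponent printed in \eqref{eq:F-Kimplicit}, which appears to carry a sign typo relative to \eqref{eq:postODE}). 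So $\mu_t$ does not solve the linearised equation with generator $\widetilde Q_{\gamma_t}$, and the ``same linear problem'' conclusion does not follow as written. The repair is to run the argument in the other direction: use the on-diagonal identity to show that the Picard solution $\gamma$ itself satisfies the \emph{linear} Feynman--Kac equation $\partial_t\nu_t(\phi)=\nu_t(Q\phi)+\nu_t(G_t\phi)$ with the frozen potential $G_t=G^{\gamma_t}$, and then invoke uniqueness (or the explicit multiplicative-functional representation) of that linear equation to conclude $\gamma_t(\phi)=\mathbb{E}_{\gamma_0}[\phi(X_t)\exp\{\int_0^tG_s(X_s)\,\mathrm{d}s\}]$. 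With that reversal, and the sign corrected, your proof goes through.
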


Consider also the following assumption.
\begin{customthm}{$(\mathrm{I})$}[Initial condition]\label{assump:initial_condition}
	The empirical measure induced by the particle process at $t = 0$ converges towards the initial distribution $\mu_0 \in \mathcal{M}_1(E)$ in $\mathbb{L}^p$, for every $p \ge 1$.
	More precisely, for every $p \ge 1$, there exists a constant $C_p > 0$ such that
	\[
	\sup_{ \phi \in \mathcal{B}_1(E) } \mathbb{E}[|m(\eta_0)(\phi) - \mu_0(\phi)|^p] \le \frac{C_p}{N^{p/2}}.
	\]
\end{customthm}

Note that Assumption \ref{assump:initial_condition} is verified when initially all the particles are sampled independently with distribution $\mu_0$, as the next lemma shows.
\begin{lemma}[Control of the initial error]\label{lemma:control_initial_condition}
	Assume that initially the $N$ particles are sampled independently according to $\mu_0 \in \mathcal{M}_1(E)$. 
	Then, Assumption \ref{assump:initial_condition} is verified.
\end{lemma}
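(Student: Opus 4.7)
The plan is to reduce the statement to a standard moment inequality for i.i.d.\ centered bounded random variables. If $X_1,\dots,X_N$ are i.i.d.\ with law $\mu_0$ and $\phi \in \mathcal{B}_1(E)$, then
\[
    m(\eta_0)(\phi) - \mu_0(\phi) = \frac{1}{N}\sum_{i=1}^N Y_i, \qquad Y_i := \phi(X_i) - \mu_0(\phi),
\]
where the $Y_i$ are i.i.d., centered, and bounded by $2\|\phi\| \le 2$. Thus the claim is just a quantitative form of the law of large numbers, and the key point is that the constant produced depends only on $p$, not on $\phi$.

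For $p \ge 2$, I would invoke the Marcinkiewicz--Zygmund (or Rosenthal) inequality, which provides a universal constant $B_p$ such that
\[
    \mathbb{E}\left[\left|\sum_{i=1}^N Y_i\right|^p\right] \le B_p\, N^{p/2}\, \mathbb{E}[|Y_1|^p] \le B_p\, 2^p\, N^{p/2}.
\]
Dividing by $N^p$ yields $\mathbb{E}[|m(\eta_0)(\phi) - \mu_0(\phi)|^p] \le 2^p B_p\, N^{-p/2}$, and since the right-hand side is independent of $\phi$, the bound is uniform over $\phi \in \mathcal{B}_1(E)$.

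For $1 \le p < 2$, I would use Jensen's inequality in the form $\mathbb{E}[|Z|^p] \le (\mathbb{E}[Z^2])^{p/2}$ and apply the $p=2$ case, noting that the $Y_i$'s being i.i.d.\ and centered give
\[
    \mathbb{E}\!\left[\bigl(m(\eta_0)(\phi) - \mu_0(\phi)\bigr)^2\right] = \frac{\mathrm{Var}(\phi(X_1))}{N} \le \frac{\|\phi\|^2}{N} \le \frac{1}{N}.
\]
Combining both regimes yields the desired $C_p$. There is no real obstacle here; the entire argument is a textbook application of moment bounds for bounded i.i.d.\ sums, and the only thing worth emphasising is that the uniformity in $\phi$ comes for free from the bound $\|\phi\|_\infty \le 1$.
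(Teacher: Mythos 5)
Your proof is correct and follows essentially the same route as the paper: the paper likewise writes $m(\eta_0)(\phi)-\mu_0(\phi)$ as an average of i.i.d.\ centred bounded variables and invokes an auxiliary lemma combining Jensen's inequality for $p\le 2$ with the Marcinkiewicz--Zygmund inequality for $p>2$. The only cosmetic difference is that you state the Marcinkiewicz--Zygmund bound in its correct form $\mathbb{E}\bigl[\lvert\sum_{i=1}^N Y_i\rvert^p\bigr]\le B_p N^{p/2}\mathbb{E}[\lvert Y_1\rvert^p]$, whereas the paper's displayed version misplaces the factor $N^{p/2}$; the final estimate is the same.
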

The proof of Lemma  \ref{lemma:control_initial_condition} is deferred to Appendix \ref{sec:appendix}.
We include Assumption \ref{assump:initial_condition} in order to be able to apply our results to a wider class of situations, than that described in Lemma \ref{lemma:control_initial_condition}.

\begin{theorem}[Propagation of chaos]\label{thm:propagation_chaos}
	Suppose that Assumptions \ref{assump:gral_slection rate}, and \ref{assump:initial_condition} are verified. 
	Then, for every $T \ge 0$ and $p \ge 1$, there exist two positive constants $\alpha_{p}$ and $\beta_p$, possibly depending on $p$,  such that
	\[
	\sup_{ \phi \in \mathcal{B}_1(E) }  \mathbb{E} \left[  \sup_{t \in [0,T]} | m(\eta_t)(\phi) - \mu_t(\phi) |^p \right]^{1/p} \le
	\alpha_p \frac{\sqrt{1 + T}}{\sqrt{N}} \mathrm{e}^{\beta_p T},
	\]
	where $(\mu_t)_{t \ge 0}$ is as in Lemma \ref{assump:gral_slection rate_existence}, with initial condition $\mu_0 \in \mathcal{M}_1(E)$ as in Assumption \ref{assump:initial_condition}.
\end{theorem}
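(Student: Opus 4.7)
The plan is a classical martingale–BDG–Gronwall scheme, in the spirit of \cite{DelMoral2004}. For a fixed $\phi \in \mathcal{B}_1(E)$ I would first use Dynkin's formula to write
\[
m(\eta_t)(\phi) = m(\eta_0)(\phi) + \int_0^t (\mathcal{Q} m_\phi)(\eta_s)\,\mathrm{d}s + M_t^\phi,
\]
where $m_\phi(\eta) := m(\eta)(\phi)$ and $M_t^\phi$ is a càdlàg square-integrable martingale. A direct computation based on \eqref{eq:gen_mut}--\eqref{eq:gen_sel} shows that the symmetric component $V_{m(\eta)}^\mathrm{s}$ of the selection rate cancels after symmetrisation (since $\phi(y)-\phi(x)$ is antisymmetric in $x,y$), yielding $(\mathcal{Q} m_\phi)(\eta) = m(\eta)(\widetilde{Q}_{m(\eta)}\phi)$, with $\widetilde Q_\mu$ as in \eqref{eq:postODE} taken with the $\mu$-\emph{independent} kernel $V_\mu - V_\mu^\mathrm{s} = \sum_{i\ge 1} V_i^\mathrm{d} V_i^\mathrm{b}$ provided by Assumption \ref{assump:gral_slection rate}. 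Subtracting the integral form of Lemma \ref{assump:gral_slection rate_existence} leaves
\[
m(\eta_t)(\phi) - \mu_t(\phi) = [m(\eta_0)(\phi) - \mu_0(\phi)] + M_t^\phi + \int_0^t \big[F_\phi(m(\eta_s)) - F_\phi(\mu_s)\big]\,\mathrm{d}s,
\]
with $F_\phi(\nu) := \nu(\widetilde Q_\nu \phi)$, which is a quadratic polynomial in $\nu$.

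Next I would control the martingale term via a Burkholder--Davis--Gundy (Rosenthal) inequality for purely discontinuous martingales. Each jump of $m_\phi(\eta_t)$ is deterministically bounded by $2/N$, and the carré du champ $\Gamma(m_\phi,m_\phi)(\eta)$ is bounded by $C/N$ uniformly in $\eta$ thanks to \eqref{eq:bound_condition} and the explicit form of the jump rates. Hence $\mathbb{E}[\langle M^\phi\rangle_T] \le CT/N$, and the jump-BDG inequality gives
\[
\mathbb{E}\left[\sup_{t \le T} |M_t^\phi|^p\right]^{1/p} \le \frac{C_p \sqrt{1+T}}{\sqrt N},
\]
uniformly in $\phi \in \mathcal{B}_1(E)$, first for $p \ge 2$ and then for $p \in [1,2)$ by Hölder.

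The third step linearises the drift. Using $\mu$-independence of $V - V^\mathrm{s}$ and the bilinear identity $\nu\nu - \mu\mu = (\nu-\mu)\nu + \mu(\nu-\mu)$, the summability hypothesis \eqref{eq:boundednes_condition} should give
\[
|F_\phi(\nu) - F_\phi(\mu)| \le \kappa\, \sup_{\psi \in \mathcal{B}_1(E)} |\nu(\psi) - \mu(\psi)|
\]
with a constant $\kappa$ uniform in $\phi \in \mathcal{B}_1(E)$. Taking the $\mathbb{L}^p$ norm, pulling Minkowski's inequality inside the time integral, handling the initial term via Assumption \ref{assump:initial_condition}, and taking the supremum over $\phi$ would produce the closed estimate
\[
u(T) \le \frac{\widetilde C_p \sqrt{1+T}}{\sqrt N} + \kappa \int_0^T u(s)\,\mathrm{d}s,
\]
for $u(T) := \sup_{\phi \in \mathcal{B}_1(E)} \mathbb{E}\bigl[\sup_{t \le T} |m(\eta_t)(\phi) - \mu_t(\phi)|^p\bigr]^{1/p}$. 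Gronwall's inequality then finishes the proof with $\beta_p = \kappa$.

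The hard part will be the drift linearisation: an $\mathbb{L}^p$ bound uniform in $\phi \in \mathcal{B}_1(E)$ must be extracted without losing the $N^{-1/2}$ rate. This is precisely what Assumption \ref{assump:gral_slection rate} is engineered for: the symmetric part $V^\mathrm{s}$ drops out of the mean-field equation (consistently with the remark after Assumption \ref{assump:additive+symmetric}) and only feeds the martingale bracket, while \eqref{eq:boundednes_condition} supplies the summability needed to turn the a priori infinite bilinear expansion of $F_\phi$ into a globally Lipschitz functional on $(\mathcal{M}_1(E),\|\cdot\|_\mathrm{TV})$, uniformly in $\phi$. Without the decomposition \eqref{eq:Vdecomp} one would have to control series that need not converge, and the scheme would break down.
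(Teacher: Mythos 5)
There is a genuine gap in your drift linearisation, and it is precisely the point the paper's proof is engineered to avoid. Writing $F_\phi(\nu) = \nu(\widetilde{Q}_\nu \phi) = \nu(Q\phi) + (\text{bilinear selection term})$, your claimed bound $|F_\phi(\nu)-F_\phi(\mu)| \le \kappa \sup_{\psi \in \mathcal{B}_1(E)} |\nu(\psi)-\mu(\psi)|$ with $\kappa$ uniform over $\phi \in \mathcal{B}_1(E)$ fails for the mutation part: $Q$ is only assumed to generate a non-explosive irreducible chain on the countable set $E$, so $Q\phi$ need not be bounded (think of the birth--death chain of Example \ref{example:B-D_chain} with unbounded rates), and the term $(\nu-\mu)(Q\phi)$ cannot be absorbed into a total-variation Lipschitz constant. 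This is exactly why the paper does not work with the fixed test function $\phi$: it takes $\psi_{s,T} = P(s,T)(\bar{\phi}_T)$, where $P(s,t)$ is the propagator of the inhomogeneous family $(\widetilde{Q}_{\mu_t})$ satisfying the backward Kolmogorov equation, so that $\partial_s \psi_{s,T} + \widetilde{Q}_{\mu_s}\psi_{s,T} = 0$ and the entire $Q$-contribution (together with the frozen-$\mu_s$ selection part) cancels identically inside the martingale decomposition of Proposition \ref{prop:martingale}; only the bounded difference $\widetilde{\Pi}_{m(\eta_s)} - \widetilde{\Pi}_{\mu_s}$ survives in the remainder. Without this device your Gr\"onwall scheme does not close in the generality of the theorem (it would only work for bounded $Q$, e.g.\ finite $E$).

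A second, related problem concerns the selection part itself: phrasing the estimate through $\sup_{\psi \in \mathcal{B}_1(E)}|\nu(\psi)-\mu(\psi)| = 2\|\nu-\mu\|_{\mathrm{TV}}$ is the wrong functional. After taking expectations you would need to control $\mathbb{E}\big[\|m(\eta_s)-\mu_s\|_{\mathrm{TV}}^p\big]$, which is not dominated by $u(s)^p = \sup_\phi \mathbb{E}[\cdots]$ (the supremum over $\psi$ sits inside the expectation, so it cannot be exchanged in the direction you need), and the total variation distance between an $N$-point empirical measure and a spread-out $\mu_s$ on a countable space does not decay at rate $N^{-1/2}$ in general. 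The paper instead uses the transposition identity \eqref{eq:transposingPi} together with the decomposition \eqref{eq:Vdecomp} and the summability \eqref{eq:boundednes_condition} to rewrite the quadratic remainder as a summable combination of terms $\lambda_s(g)$ for \emph{explicit deterministic} functions $g \in \mathcal{B}_1(E)$ (after normalising by $\kappa = \|\sum_{i}|V_i^{\mathrm{d}}-V_i^{\mathrm{b}}|\|$), each of which is individually bounded by the Gr\"onwall functional $\Phi_p(s)$. Your rank-one expansion can be repaired to do the same, but the statement ``globally Lipschitz on $(\mathcal{M}_1(E),\|\cdot\|_{\mathrm{TV}})$'' as written does not yield a closed inequality. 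The martingale part of your argument (jump bound $2/N$, carr\'e-du-champ of order $1/N$, BDG giving $\sqrt{1+T}/\sqrt{N}$) is sound and matches Lemmas \ref{lemma:bound_quadratic_variation:additive}--\ref{thm:control_quadratic_variation:additive} of the paper.
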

The proof of Theorem \ref{thm:propagation_chaos} is deferred to Section \ref{sec:proofThm1}.

Let $(x_n)_{n \ge 1}$ be an enumeration of the elements in $E$.
We define the following distance in $\mathcal{M}_1(E)$:
\begin{equation*}
	\|\mu_1 - \mu_2 \|_{\mathrm{w}} := \sum_{k \ge 1} 2^{-k} |\mu_1(x_k) - \mu_2(x_k)|.
\end{equation*}
Note that the space $\mathcal{M}_1(E)$ with the convergence in law (the weak topology) is metrisable with this distance.
As a consequence of Theorem \ref{thm:propagation_chaos} we get the following result.
\begin{corollary}[Convergence of the empirical measure]\label{cor:convergence_in_normL}
	Suppose that Assumptions \ref{assump:gral_slection rate}, and \ref{assump:initial_condition} are verified. 
	Then, for every $T \ge 0$ and $p \ge 1$, there exist two positive constants $\alpha_{p}$ and $\beta_p$, such that
	\[
	\mathbb{E}\left[ \left( \sup_{t \in [0,T]} \| m(\eta_t) - \mu_t \|_\mathrm{w} \right)^p \right]^{1/p} \le 
	\alpha_p \frac{\sqrt{1 + T}}{\sqrt{N}} \mathrm{e}^{\beta_p T},	
	\]
	where $(\mu_t)_{t \ge 0}$ is as in Lemma \ref{assump:gral_slection rate_existence}, with initial condition $\mu_0 \in \mathcal{M}_1(E)$ as in Assumption \ref{assump:initial_condition}.
\end{corollary}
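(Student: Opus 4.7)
The plan is to reduce Corollary \ref{cor:convergence_in_normL} to a direct application of Theorem \ref{thm:propagation_chaos}, exploiting the sum structure of $\|\cdot\|_{\mathrm{w}}$. The crucial observation is that for each $k \ge 1$ one has $|\mu_1(x_k) - \mu_2(x_k)| = |\mu_1(\pmb{1}_{\{x_k\}}) - \mu_2(\pmb{1}_{\{x_k\}})|$ with $\pmb{1}_{\{x_k\}} \in \mathcal{B}_1(E)$, so the bound provided by Theorem \ref{thm:propagation_chaos} can be applied termwise with the same constants $\alpha_p,\beta_p$ for every $k$.

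First, I would swap the supremum over $t$ with the series by monotone convergence (the partial sums are nondecreasing and nonnegative) to obtain
\[
\sup_{t \in [0,T]} \|m(\eta_t) - \mu_t\|_{\mathrm{w}}
\;\le\; \sum_{k \ge 1} 2^{-k} \sup_{t \in [0,T]} \bigl| m(\eta_t)(\pmb{1}_{\{x_k\}}) - \mu_t(\pmb{1}_{\{x_k\}}) \bigr|.
\]
Next, I would apply Minkowski's inequality for countable sums of non-negative random variables in $\mathbb{L}^p$ (which follows from the finite triangle inequality together with monotone convergence on the partial sums), yielding
\[
\mathbb{E}\!\left[ \bigl( \sup_{t \in [0,T]} \|m(\eta_t) - \mu_t\|_{\mathrm{w}} \bigr)^{p} \right]^{1/p}
\;\le\; \sum_{k \ge 1} 2^{-k}\, \mathbb{E}\!\left[ \sup_{t \in [0,T]} \bigl| m(\eta_t)(\pmb{1}_{\{x_k\}}) - \mu_t(\pmb{1}_{\{x_k\}}) \bigr|^{p} \right]^{1/p}.
\]

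Finally, I would invoke Theorem \ref{thm:propagation_chaos} with $\phi = \pmb{1}_{\{x_k\}}$ to bound each summand by $\alpha_p \sqrt{1+T}\,\mathrm{e}^{\beta_p T}/\sqrt{N}$, which is uniform in $k$. Since $\sum_{k \ge 1} 2^{-k} = 1$, this constant factors out and gives the stated inequality with the same $\alpha_p,\beta_p$ as in Theorem \ref{thm:propagation_chaos}. There is no real obstacle: the whole content of the statement lies in the uniformity of Theorem \ref{thm:propagation_chaos} over $\phi \in \mathcal{B}_1(E)$, which makes the termwise estimate independent of $k$; the remaining manipulations (interchange of supremum and series, countable Minkowski) are standard measure-theoretic facts.
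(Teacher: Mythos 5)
Your proposal is correct and follows essentially the same route as the paper's own proof: bound the supremum of the weighted series by the series of suprema, apply Minkowski's inequality for infinite sums in $\mathbb{L}^p$, and then invoke Theorem \ref{thm:propagation_chaos} termwise with $\phi = \pmb{1}_{\{x_k\}} \in \mathcal{B}_1(E)$, using the uniformity over $\mathcal{B}_1(E)$ and $\sum_{k \ge 1} 2^{-k} = 1$ to conclude. Your write-up merely makes the interchange of supremum and series a separate explicit step, which the paper folds into a single application of Minkowski's inequality.
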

Corollary \ref{cor:convergence_in_normL} is proved in Section \ref{sec:proofThm1}.

Note that this ensures a functional convergence in $\mathbb{L}^p \big(\mathcal{C}([0,T], \mathcal{M}_1(E)) \big)$: 
\[
m(\eta_\cdot) \xrightarrow[N \rightarrow \infty]{\mathbb{L}^p} \mu_\cdot,
\]
with an estimation of the speed of convergence.
Furthermore, Theorem  \ref{thm:propagation_chaos}, for $p=4$, and a Borel\,--\,Cantelli argument imply the convergence $m(\eta_\cdot) \xrightarrow{\mathrm{c.c.}} \mu_\cdot$ in the weak sense, where $\mathrm{c.c.}$ denotes the complete (or universal) convergence (cf.\ \cite[Def.\ 1.6]{2013Gut}).
In particular, this implies
\(
m(\eta_\cdot) \xrightarrow{ \mathrm{a.s.} } \mu_\cdot,
\)
when $N \rightarrow \infty$, in the weak sense, no matter in which space the random variables are coupled. 

Theorem \ref{thm:propagation_chaos} is a generalisation for multi-allelic Moran models with more than two allelic types, of Proposition 3.1 in \cite{cordero_deterministic_2017}, where the uniform convergence on compacts time intervals in probability is proved.	
The speed of convergence in Theorem \ref{thm:propagation_chaos} can also be related to existing results that ensure the convergence of the empirical measure induced by a Moran type (or Fleming\,--\,Viot) particle process towards the law of an absorbing process conditioned to non-absorption.
See for instance \cite[Prop.\ 3.5]{DelMoralMiclo2000} \cite[Lemma 3.1]{DelMoral2011}, \cite[Thm.\ 2.2]{villemonais_general_2014}, 
\cite[Thm.\ 1.1]{cloez_fleming-viot_2016},
and \cite[Thm.\ 5.10 and Cor.\ 5.12]{MR4193898}.
See also \cite[Thm.\ 3.1 and Rmk.\ 3.2]{2015Benaim&Cloez} where the almost sure convergence  (and also the complete convergence) is proved when the state space is finite.
As far as we know, Theorem \ref{thm:propagation_chaos} and Corollary \ref{cor:convergence_in_normL} are the first results ensuring the convergence uniformly on compacts in $\mathbb{L}^p$, for all $p \ge 1$, with speed of convergence of order $1/\sqrt{N}$ for multi-allelic Moran models
with general selection rates in the sense of Assumption \ref{assump:gral_slection rate},
in discrete countable state spaces, not necessarily finite.
The idea behind the proof is closed to the methods in \cite{MR2262944}: it consists in finding a martingale indexed by the interval $[0, T]$, whose terminal value at time $T$ is precisely
$m(\eta_T)(\phi) - \mu_T(\phi)$ plus a term whose $\mathbb{L}^p$ norm can be controlled, for any $\phi \in \mathcal{B}_b(E)$.
Thereafter, the final result comes by a  Gr\"{o}nwall type argument, similarly to the proof of Proposition 1 in \cite{SalezMerle2019}.
Nevertheless, \cite{MR2262944} does not contain any uniform bound as in Theorem \ref{thm:propagation_chaos}.

\subsubsection{Uniform in time propagation of chaos for additive selection rates}

Under a more specific expression for the selection rates, we prove a uniform in time bound for the convergence of $\big(m(\eta_t) \big)_{t \ge 0}$ towards $\big( \mu_t \big)_{t \ge 0}$, when $N \rightarrow \infty$.
Consider the following kind of selection rates that we call \emph{additive selection}.
\begin{customthm}{$(\mathrm{C}1)$}[Additive selection] \label{assump:additive+symmetric}
	The selection rates are uniformly bounded as in \eqref{eq:bound_condition}.
	Moreover, there exist two continuous nonnegative functions 
	\(
	\mu \mapsto V^{\mathrm{d}}_{\mu}
	\)
	and
	\(
	\mu \mapsto V^{\mathrm{b}}_{\mu},
	\)
	from $(\mathcal{M}_1(E), \|\cdot\|_{\mathrm{TV}})$ to $(\mathcal{B}_b(E), \|\cdot\|)$;
	and a continuous, nonnegative function $\mu \mapsto V_\mu^{\mathrm{s}}$ from
	$(\mathcal{M}_1(E), \|\cdot\|_{\mathrm{TV}})$ to $(\mathcal{B}_b(E \times E), \|\cdot\|)$
	such that $V_\mu^\mathrm{s}$ is symmetric on $E \times E$, for every $\mu \in \mathcal{M}_1(E)$ and
	\begin{equation*}
		V_\mu(x,y) = V_\mu^{\mathrm{d}}(x) + V_\mu^{\mathrm{b}}(y) + V_\mu^\mathrm{s}(x,y), 
	\end{equation*}
	for all $x,y \in E$ and $\mu \in \mathcal{M}_1(E)$.
	Furthermore, there exists a function $\Lambda \in \mathcal{B}_b(E)$ such that 
	\begin{equation}\label{eq:defW}
		\Lambda(x) = V_\mu^{\mathrm{b}}(x) - V_\mu^{\mathrm{d}}(x),
	\end{equation}
	for every $x \in E$.
\end{customthm}

\begin{remark}[Selection rates independent on $\mu$]
	When the selection rates do not depend on $\mu$, Assumption \ref{assump:additive+symmetric} reduces to the existence of $V^\mathrm{d}, V^\mathrm{b} \in \mathcal{B}_b(E)$ and a symmetric  $V^\mathrm{s} \in \mathcal{B}_b(E \times E)$ such that
	\[
	V(x,y) = V^{\mathrm{d}}(x) + V^{\mathrm{b}}(y) + V^\mathrm{s}(x,y).	
	\]
	Let $\Lambda \in \mathcal{B}_b(E)$ be a fixed function.
	Typical examples of functions $V^\mathrm{b}$ and $V^\mathrm{d}$ satisfying this condition are
	\[
	V^\mathrm{b} = (\Lambda -c)^+ \text{ and } V^\mathrm{d} = (\Lambda -c)^-,
	\]
	for a fixed constant $c \in \mathbb{R}$, 
	where we use the standard notation 
	\(
	(x)^+ := \max\{x,0\} \text{ and } (x)^- := -\min\{x,0\}.
	\)
	These are in fact the selection rates considered by Angeli et al.\ \cite[\S\ 3.3]{Angeli2021} in the context of  cloning algorithms.
	Moreover, the case $c = 0$ is considered in Example 3.1-(2) in \cite{MR2262944}.
	Note that in this case, Assumption \ref{assump:gral_slection rate} is also verified.
	
	From a biological point of view, the parameter $c \in \mathbb{R}$ can be seen as a fitness parameter.
	Let us assume that $V_\mu^\mathrm{s}$ is null for simplicity, and denote by $\xi_t^{(i)}$ the type of the $i$-th individual, for $1 \le i \le N$, at time $t \ge 0$.
	Then, if $\Lambda(\xi_t^{(i)}) \le c$, the $i$-th individual dies and another randomly chosen individual reproduces with rate $(\Lambda(\xi_t^{(i)}) - c)^-$. 
	Otherwise, if $\Lambda(\xi_t^{(i)}) \ge c$ a random chosen individual dies and the $i$-th individual  reproduces with rate $(\Lambda(\xi_t^{(i)}) - c)^+$.
	
	Another example of particular interest is when 
	\(
	V^\mathrm{b} = 0.
	\)
	Notice that the Moran process with these selection rates is in fact a Fleming\,--\,Viot particle process (cf.\ \cite{ferrari_quasi_2007}).

\end{remark}

\begin{remark}[Selection rates depending on $\mu$]
	Consider a fixed function $\Lambda \in \mathcal{B}_b(E)$.
	Typical examples of functions $V_\mu^\mathrm{b}$ and $V_\mu^\mathrm{d}$ are:
	\[
	V_\mu^\mathrm{b} = \big(\Lambda - \mu(\Lambda)\big)^+ \text{ and } V_\mu^\mathrm{d} = \big(\Lambda - \mu(\Lambda)\big)^-.
	\]
	These are the selection rates considered in \cite[\S\ 1.5.2, p.\ 35]{DelMoral2004}, see also Example 3.1-(3) in \cite{MR2262944}.
	In this case, the biological interpretation of $\mu(\Lambda)$ is similar to that of the parameter $c$ in the previous remark.
	Indeed, the fitness coefficient evolves in time according to the evolution of the population.
	
\end{remark}

Consider that Assumption \ref{assump:additive+symmetric} is satisfied.
Then, from \eqref{eq:preODE} we can recover the nonlinear differential equation
\begin{equation}\label{eq:fokker-plack}
	\partial_t \gamma_t(\phi) = \gamma_t \big((Q + \Lambda) \phi - \gamma_t(\Lambda) \phi \big),
\end{equation}
where $\Lambda$ is as defined in \eqref{eq:defW}.
Consider the Feynman\,--\,Kac semigroup $(P_t^\Lambda)_{t \ge 0}$, where
\begin{equation*}
	P_t^\Lambda (\phi) : x \mapsto \mathbb{E}_x\left[ \phi(X_t) \exp\left\{\int_0^t \Lambda(X_s) \mathrm{d}s \right\} \right]
\end{equation*}
whose generator is $Q + \Lambda$. 
Let us define the normalised version of this semigroup as follows
\begin{equation}\label{def:eq_normalised_semigroup}
		\mu_t (\phi) := \frac{\mu_0 P_t^\Lambda (\phi)}{\mu_0 P_t^\Lambda (\pmb{1})},
	\end{equation}
	where $\pmb{1}$ denotes the all-one function on $E$.
	Then, $(\mu_t)_{t \ge 0}$ is the solution of the nonlinear differential equation \eqref{eq:fokker-plack} with initial value $\mu_0(\phi)$ for $t = 0$ \cite[Eq.\ (1.17)]{DelMoral2004}.
	
	Furthermore, the definition of $(\mu_t)$ in \ref{def:eq_normalised_semigroup} is invariant by translation of the function $\Lambda$, i.e.\ for every real $\beta$ we have that $\mu_t (\phi) = {\mu_0 P_t^{\Lambda - \beta} (\phi)}/{\mu_0 P_t^{\Lambda - \beta} (\pmb{1})}$.
	In particular, one can always interpret $(\mu_t)_{t \ge 0}$ as the distribution of an absorbed Markov chain conditioned to non-absorption up to time $t$ with killing rate $\kappa = \sup \Lambda - \Lambda$.
	This naturally relates the study of the behaviour of $(\mu_t)_{t \ge 0}$ when $t \rightarrow \infty$, to the theory of quasi-stationary distributions (QSD), see e.g.\ \cite[\S\ 7]{zbMATH06190205} and \cite[\S\ 1.2]{Corujo_thesis}.

Consider the following assumptions related to the control in the norm $\mathbb{L}^p$ of the initial error and the exponential convergence of $(\mu_t)_{t \ge 0}$, as defined by \eqref{def:eq_normalised_semigroup}, towards a unique limit, for every initial distribution on $\mu_0 \in \mathcal{M}_1(E)$.

\begin{customthm}{$(\mathrm{C}2)$}[Uniform exponential ergodicity of the normalised semigroup]\label{assump:ergod_normalised}
	There exist a distribution $\mu_{\infty}\in \mathcal{M}_1(E)$ and $C, \gamma > 0$, such that for every initial distribution $\mu_0 \in \mathcal{M}_1(E)$ and for all $t \ge 0$:
	\begin{equation}
		\|\mu_{t}  - \mu_{\infty}\|_{\mathrm{TV}} \le C \mathrm{e}^{-\gamma t}, \label{eq:assump_ergo_normalised} 
	\end{equation}
	where $(\mu_t)_{t \ge 0}$ is defined as in \eqref{def:eq_normalised_semigroup}.
\end{customthm}

Assumption \ref{assump:ergod_normalised} is always satisfied when $E$ is finite. 
In this case, inequality \eqref{eq:assump_ergo_normalised} was proved by Darroch and Seneta \cite{Darroch_Seneta1967} and the result comes as a consequence of the Perron\,--\,Frobenius Theorem (see \cite[Thm.\ 8]{meleard_quasi-stationary_2012} for the specific context of quasi-stationary distributions).
The case where $E$ is countable is more delicate and has attracted lots of attention and several methods have been applied.
See for example the work of Del Moral and Miclo \cite{AFST_2002_6_11_2_135_0}, the articles of Champagnat and  Villemonais on exponential convergence towards the quasi-stationary distribution, specifically \cite{zbMATH06540706,Villemonais2017}, and also the works of Bansaye et al.\ \cite{Bansaye2019,zbMATH07181472}.
See also the recent work of Del Moral et al.\ \cite{DelMoral2022} on the stability of positive semigroups.
In Section \ref{sec:examples}, we provide several examples of process where Assumption \ref{assump:ergod_normalised} holds.

We are now in a position to state our main results for the multi-allelic Moran model with additive selection.

\begin{theorem}[Uniform in time propagation of chaos]\label{thm:uniformLp}
	Under Assumptions \ref{assump:initial_condition}, \ref{assump:additive+symmetric} and \ref{assump:ergod_normalised}, for every $p \ge 1$, there exists a constant $C_p$, such that
	\[
	\sup_{ \phi \in \mathcal{B}_1(E) } \sup_{t \ge 0} \mathbb{E} \big[ | m(\eta_t)(\phi) - \mu_t(\phi) |^p \big]^{1/p} \le \frac{C_p}{\sqrt{N}}.
	\] 
\end{theorem}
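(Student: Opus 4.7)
The plan is to combine the finite-horizon bound from Theorem~\ref{thm:propagation_chaos}, which deteriorates exponentially in the window length, with a contraction estimate for the deterministic nonlinear flow $\Phi_\tau : \mathcal{M}_1(E) \to \mathcal{M}_1(E)$, where $\Phi_\tau(\nu_0)$ denotes the value at time $\tau$ of the solution of \eqref{eq:fokker-plack} starting from $\nu_0$. Under Assumption~\ref{assump:additive+symmetric} this flow admits the explicit Feynman\,--\,Kac representation $\Phi_\tau(\nu)(\phi) = \nu(P_\tau^\Lambda \phi)/\nu(P_\tau^\Lambda \pmb{1})$, and Assumption~\ref{assump:ergod_normalised} will deliver a contraction that, on time scales $\tau$ large enough, absorbs the exponential blow-up in Theorem~\ref{thm:propagation_chaos}. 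Setting $\delta_s := \sup_{\phi \in \mathcal{B}_1(E)} \mathbb{E}[|m(\eta_s)(\phi) - \mu_s(\phi)|^p]^{1/p}$, the target is a recursion of the form $\delta_t \le \mathrm{const}/\sqrt{N} + \tfrac{1}{2}\delta_{t-\tau}$ for a fixed $\tau$.

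\textbf{Step 1 (semigroup decomposition).} Fix $\tau>0$, to be chosen later. For $t \ge \tau$ and $\phi \in \mathcal{B}_1(E)$, use $\mu_t = \Phi_\tau(\mu_{t-\tau})$ to split
\[
m(\eta_t)(\phi) - \mu_t(\phi) = \big[\, m(\eta_t)(\phi) - \Phi_\tau(m(\eta_{t-\tau}))(\phi)\, \big] + \big[\, \Phi_\tau(m(\eta_{t-\tau}))(\phi) - \Phi_\tau(\mu_{t-\tau})(\phi)\, \big].
\]
By the Markov property, conditionally on $\mathcal{F}_{t-\tau}$ the process $(\eta_{t-\tau+s})_{0 \le s \le \tau}$ is a copy of the Moran model started from $\eta_{t-\tau}$, for which Assumption~\ref{assump:initial_condition} holds with deterministic initial profile $\mu_0 = m(\eta_{t-\tau})$ and vanishing initial error. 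Applying Theorem~\ref{thm:propagation_chaos} on $[0,\tau]$ conditionally on $\mathcal{F}_{t-\tau}$ and then taking unconditional $L^p$-norms bounds the first bracket by $\alpha_p \sqrt{1+\tau}\, \mathrm{e}^{\beta_p \tau}/\sqrt{N}$.

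\textbf{Step 2 (contraction).} The crucial analytic step is the identity
\[
\Phi_\tau(\nu_1)(\phi) - \Phi_\tau(\nu_2)(\phi) = \frac{(\nu_1 - \nu_2)\big( P_\tau^\Lambda \phi - \Phi_\tau(\nu_2)(\phi)\, P_\tau^\Lambda \pmb{1} \big)}{\nu_1(P_\tau^\Lambda \pmb{1})}.
\]
Assumption~\ref{assump:ergod_normalised} applied with initial laws $\delta_x$ and $\nu_2$ yields $\big|P_\tau^\Lambda \phi(x)/P_\tau^\Lambda \pmb{1}(x) - \Phi_\tau(\nu_2)(\phi)\big| \le 4C\, \mathrm{e}^{-\gamma \tau}\, \|\phi\|$, uniformly in $x \in E$. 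Normalising numerator and denominator by $P_\tau^\Lambda \pmb{1}$ evaluated at a fixed reference point, one rewrites the right-hand side as $(\nu_1 - \nu_2)(\psi_{\tau,\phi,\nu_2})/\nu_1(r_\tau)$ for a \emph{deterministic} test function $\psi_{\tau,\phi,\nu_2}$ with $\|\psi_{\tau,\phi,\nu_2}\| \le K \mathrm{e}^{-\gamma \tau}$ and a normalised potential $r_\tau$ bounded above and below uniformly in $\tau$. Taking $\nu_1 = m(\eta_{t-\tau})$, $\nu_2 = \mu_{t-\tau}$ and using that $\psi_{\tau,\phi,\mu_{t-\tau}}/\|\psi_{\tau,\phi,\mu_{t-\tau}}\| \in \mathcal{B}_1(E)$ is deterministic, one gets
\[
\mathbb{E}\big[|\Phi_\tau(m(\eta_{t-\tau}))(\phi) - \Phi_\tau(\mu_{t-\tau})(\phi)|^p\big]^{1/p} \le K' \mathrm{e}^{-\gamma \tau}\, \delta_{t-\tau}.
\]
The main obstacle here is the denominator: a naive estimate introduces uncompensated factors $\mathrm{e}^{\mathrm{osc}(\Lambda)\tau}$ that would destroy the contraction whenever $\gamma < \mathrm{osc}(\Lambda)$. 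Ensuring instead that the ratio $P_\tau^\Lambda \pmb{1}(x)/P_\tau^\Lambda \pmb{1}(y)$ is bounded above and below uniformly in $\tau$ and in $x,y \in E$ is the nontrivial consequence of Assumption~\ref{assump:ergod_normalised} that does the work; it is extracted by writing $P_\tau^\Lambda \pmb{1} = P_{\tau/2}^\Lambda(P_{\tau/2}^\Lambda \pmb{1})$ and exploiting the uniform stabilisation of $P_{\tau/2}^\Lambda(\,\cdot\,)/P_{\tau/2}^\Lambda \pmb{1}(\,\cdot\,)$ towards the constant $\mu_\infty(\,\cdot\,)$.

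\textbf{Step 3 (iteration).} Combining Steps~1 and~2 gives
\[
\delta_t \le \alpha_p\, \frac{\sqrt{1+\tau}}{\sqrt{N}}\, \mathrm{e}^{\beta_p \tau} + K' \mathrm{e}^{-\gamma \tau}\, \delta_{t-\tau}, \qquad t \ge \tau.
\]
Fixing $\tau$ so large that $K' \mathrm{e}^{-\gamma \tau} \le 1/2$ and iterating along a partition of $[0,t]$ into windows of length $\tau$, the resulting geometric series together with the initial control $\delta_0 \le C_p/\sqrt{N}$ from Assumption~\ref{assump:initial_condition} yields $\delta_t \le C/\sqrt{N}$ uniformly in $t \ge 0$. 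For the remaining range $t \in [0,\tau]$, Theorem~\ref{thm:propagation_chaos} applied directly on $[0,\tau]$ already provides the same $1/\sqrt{N}$ bound with constants depending only on the fixed $\tau$, so the claimed uniform estimate holds for all $t \ge 0$.
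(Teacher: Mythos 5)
Your proposal is correct, and although it starts from the same window decomposition as the paper's proof (equation \eqref{eq:obj_function} in Lemma \ref{lemma:keylemma} is exactly your split into $a(t)=m(\eta_t)(\phi)-\Phi_\tau(m(\eta_{t-\tau}))(\phi)$ and $b(t)=\Phi_\tau(m(\eta_{t-\tau}))(\phi)-\Phi_\tau(\mu_{t-\tau})(\phi)$), it closes the argument in a genuinely different way. The paper bounds $b(t)$ by the crude uniform contraction $\sup_{\nu_1,\nu_2}\|\Phi_\tau(\nu_1)-\Phi_\tau(\nu_2)\|_{\mathrm{TV}}\le C\rho^{\tau}$, which discards all $N$-dependence; this forces a window length $\tau\sim\ln N$ to balance $\kappa^{\tau}/\sqrt{N}$ against $\rho^{\tau}$, yields only the suboptimal rate $N^{-\epsilon p/2}$, and then requires the separate bootstrap $I_p(N)\le C(N^{-p/2}+I_{2p}(N))$ together with an induction on $p$-doubling to reach $N^{-1/2}$. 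You instead prove a genuine Lipschitz-type contraction of $\Phi_\tau$ tested against the signed measure $m(\eta_{t-\tau})-\mu_{t-\tau}$, which retains the factor $\delta_{t-\tau}$; this permits a \emph{fixed} $\tau$ and a geometric recursion $\delta_t\le A/\sqrt{N}+\tfrac12\delta_{t-\tau}$ that delivers the optimal rate in one pass, with no bootstrap. This is essentially the stability-of-nonlinear-filters route (Del Moral\,--\,Guionnet, Le Gland\,--\,Oudjane), and your key identity and the bound $\|P_\tau^\Lambda\phi/P_\tau^\Lambda\pmb{1}-\Phi_\tau(\nu_2)(\phi)\|\le 4C\mathrm{e}^{-\gamma\tau}$ from Assumption \ref{assump:ergod_normalised} are both correct; the fact that $\psi_{\tau,\phi,\mu_{t-\tau}}$ is deterministic is exactly what makes the $\mathbb{L}^p$ step legitimate.

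Two details deserve to be made explicit. First, Theorem \ref{thm:propagation_chaos} is stated under Assumption \ref{assump:gral_slection rate}, whereas you invoke it under Assumption \ref{assump:additive+symmetric}; this is legitimate, but only after observing that \ref{assump:additive+symmetric} implies \ref{assump:gral_slection rate} by absorbing $V_\mu^{\mathrm{d}}(x)+V_\mu^{\mathrm{d}}(y)$ into the symmetric part, so that the remaining non-symmetric part is $\pmb{1}(x)\Lambda(y)$ and is $\mu$-independent (alternatively, one redoes the finite-horizon estimate directly, as the paper does for its term $a(t)$ via the weight process $A_t^s$). Second, the uniform two-sided bound on $x\mapsto \mathrm{e}^{-\lambda\tau}P_\tau^\Lambda\pmb{1}(x)$, uniformly in $\tau$ \emph{and} in $x$ (you need it pointwise in $x$ because $\nu_1=m(\eta_{t-\tau})$ is random), requires $\inf_E h>0$, which Lemma \ref{corollary:exp_ergodicity_nonnormalised} does not state; it does follow from Assumption \ref{assump:ergod_normalised} and $\|\Lambda\|<\infty$ via the identity $h(x)=\mathrm{e}^{-\lambda s}P_s^\Lambda\pmb{1}(x)\,\Phi_s(\delta_x)(h)\ge \mathrm{e}^{-(\lambda+\|\Lambda\|)s}\big(1-2C\mathrm{e}^{-\gamma s}\|h\|\big)$ for $s$ fixed large enough. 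Both are fillable computations rather than gaps.
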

The proof of Theorem \ref{thm:uniformLp} is based on the methods developed by Rousset \cite{MR2262944} and it is deferred to Appendix \ref{sec:proof_thm2-3}.

\begin{corollary}[Convergence of the empirical measure]\label{cor:convergence_in_normL_uniform}
	Suppose that Assumptions \ref{assump:initial_condition}, \ref{assump:additive+symmetric} and \ref{assump:ergod_normalised} are verified. 
	Then, for every $p \ge 1$, there exists a constant $C_{p} > 0$, such that
	\[
	\sup_{t \ge 0 } \mathbb{E}\big[ \big(  \| m(\eta_t) - \mu_t \|_\mathrm{w} \big)^p \big]^{1/p} \le \frac{C_{p}}{\sqrt{N}}.
	\]
\end{corollary}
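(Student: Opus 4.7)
The plan is to derive this corollary directly from Theorem \ref{thm:uniformLp} by exploiting the fact that the weak distance $\|\cdot\|_\mathrm{w}$ is a weighted $\ell^1$ sum of pointwise discrepancies. First, I would note that for each $k \ge 1$, the indicator function $\pmb{1}_{\{x_k\}}$ belongs to $\mathcal{B}_1(E)$ since $\|\pmb{1}_{\{x_k\}}\| = 1$, and that $m(\eta_t)(x_k) - \mu_t(x_k) = m(\eta_t)(\pmb{1}_{\{x_k\}}) - \mu_t(\pmb{1}_{\{x_k\}})$. Therefore, Theorem \ref{thm:uniformLp} immediately yields
\[
\sup_{t \ge 0} \mathbb{E}\big[|m(\eta_t)(x_k) - \mu_t(x_k)|^p\big]^{1/p} \le \frac{C_p}{\sqrt{N}},
\]
with $C_p$ independent of $k$.

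Next, I would apply Minkowski's inequality for $\mathbb{L}^p$ norms to the series defining $\|m(\eta_t) - \mu_t\|_\mathrm{w}$. Writing $\|m(\eta_t) - \mu_t\|_\mathrm{w} = \sum_{k \ge 1} 2^{-k} |m(\eta_t)(x_k) - \mu_t(x_k)|$, the triangle inequality in $\mathbb{L}^p(\mathbb{P})$ gives
\[
\mathbb{E}\big[\|m(\eta_t) - \mu_t\|_\mathrm{w}^p\big]^{1/p} \le \sum_{k \ge 1} 2^{-k} \mathbb{E}\big[|m(\eta_t)(x_k) - \mu_t(x_k)|^p\big]^{1/p}.
\]
Substituting the bound from the previous step yields an upper bound of $(C_p/\sqrt{N}) \sum_{k \ge 1} 2^{-k} = C_p/\sqrt{N}$, uniformly in $t \ge 0$. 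Taking the supremum over $t$ concludes the proof.

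There is no substantial obstacle here: the corollary is a purely elementary consequence of Theorem \ref{thm:uniformLp}, following the exact same template as the derivation of Corollary \ref{cor:convergence_in_normL} from Theorem \ref{thm:propagation_chaos}. The only minor technical point to verify is the legitimacy of exchanging the infinite sum with the $\mathbb{L}^p$ norm via Minkowski, which is immediate since the series $\sum 2^{-k}$ converges and each term has a uniform $\mathbb{L}^p$ bound, ensuring absolute summability in $\mathbb{L}^p$.
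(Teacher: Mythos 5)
Your proof is correct and is exactly the argument the paper intends: the authors explicitly state that the proof of this corollary is analogous to that of Corollary \ref{cor:convergence_in_normL}, i.e.\ apply Minkowski's inequality for infinite sums to the series defining $\|\cdot\|_\mathrm{w}$ and invoke the uniform-in-time bound of Theorem \ref{thm:uniformLp} for the test functions $\pmb{1}_{\{x_k\}} \in \mathcal{B}_1(E)$. No gaps.
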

The proof of Corollary \ref{cor:convergence_in_normL_uniform} is analogous to that of Corollary \ref{cor:convergence_in_normL}, and we skip it for the seek of brevity.

\begin{remark}[Almost sure convergence]\label{rmk:almostsure_conv}
	Corollary \ref{cor:convergence_in_normL_uniform}, for $p=4$, and a Borel\,--\,Cantelli argument imply the convergence $m(\eta_T) \xrightarrow{ \mathrm{c.c.} } \mu_T$ in $\mathcal{M}_1(E)$, when $N \rightarrow \infty$, for every $T \ge 0$, where $\mathrm{c.c.}$ denotes the complete (or universal) convergence.
	In particular, this implies
	\(
	m(\eta_T) \xrightarrow{ \mathrm{a.s.} } \mu_T,
	\)
	when $N \rightarrow \infty$, for every $T \ge 0$.
	Note that in contrast with Corollary \ref{cor:convergence_in_normL}, Corollary \ref{cor:convergence_in_normL_uniform} does not ensure the convergence in $\mathcal{C}([0,T], \mathcal{M}_1(E))$.
\end{remark}

As a consequence of Theorem \ref{thm:uniformLp}, it is also possible to control the bias of one particle, following a similar method of that in \cite[Thm.\ 4.2]{MR2262944}.
Thus, since these results are somehow expected, we have decided, for the sake of brevity, to place the statement (Theorem \ref{thm:TVbound}) and its proof in the appendices.

The following result ensures the exponential ergodicity of the unnormalised semigroup.

\begin{lemma}[Exponential ergodicity of the unnormalised semigroup]\label{corollary:exp_ergodicity_nonnormalised}
	Suppose that Assumptions \ref{assump:additive+symmetric} and \ref{assump:ergod_normalised} are verified.
	Then, there exists a unique triplet $(\mu_{\infty}, h, \lambda) \in \mathcal{M}_1(E) \times \mathcal{B}_b(E) \times \mathbb{R}$, of eigenelements of $Q+ \Lambda$ such that $h$ is strictly positive, $\mu_{\infty}(h) = 1$,
	\[
	\mu_{\infty} P_t^\Lambda = \mathrm{e}^{\lambda t} \mu_{\infty} \text{ and } P_t^\Lambda (h) = \mathrm{e}^{\lambda t}h.
	\] 
	Moreover, there exist  $C, \gamma > 0$ such that for all $t \ge 0$:
	\begin{equation}
		\sup_{ \mu_0 \in \mathcal{M}(E)}\| \mathrm{e}^{-\lambda t} \mu_0 P_{t}^{\Lambda} - \mu_0(h) \mu_\infty \|_{\mathrm{TV}} \le C \mathrm{e}^{- \gamma t}. \label{eq:assump_ergo_FeynmanKac}
	\end{equation}
	Furthermore, $\lambda \le 0$ whether $\Lambda \le 0$.
\end{lemma}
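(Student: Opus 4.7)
The plan is to construct $(\mu_\infty, h, \lambda)$ explicitly from the long-time asymptotics of the Feynman--Kac semigroup $(P_t^\Lambda)$, and then derive \eqref{eq:assump_ergo_FeynmanKac} by splitting the difference into a ``mass'' contribution and a ``shape'' contribution, each controlled via Assumption \ref{assump:ergod_normalised}. The key identity, obtained by differentiating in $t$ and using $Q\mathbf{1} = 0$, is
\[
\mu_0 P_t^\Lambda(\mathbf{1}) = \exp\Bigl\{ \int_0^t \mu_s(\Lambda)\,\mathrm{d}s \Bigr\},
\]
where $(\mu_s)$ is the normalised flow starting from $\mu_0$.

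Setting $\lambda := \mu_\infty(\Lambda)$, Assumption \ref{assump:ergod_normalised} together with $|\mu_s(\Lambda) - \lambda| \le 2\|\Lambda\| \|\mu_s - \mu_\infty\|_{\mathrm{TV}} \le 2C\|\Lambda\|\, \mathrm{e}^{-\gamma s}$ ensures that $\int_0^\infty (\mu_s(\Lambda) - \lambda)\,\mathrm{d}s$ converges absolutely, uniformly in $\mu_0$. Applying this with $\mu_0 = \delta_x$, I would define
\[
h(x) := \lim_{t \to \infty} \mathrm{e}^{-\lambda t} P_t^\Lambda \mathbf{1}(x) = \exp\Bigl\{ \int_0^\infty (\mu_s^x(\Lambda) - \lambda)\,\mathrm{d}s \Bigr\},
\]
which is strictly positive, bounded, and satisfies $\|\mathrm{e}^{-\lambda t} P_t^\Lambda \mathbf{1} - h\| \le K \mathrm{e}^{-\gamma t}$ via the elementary bound $|\mathrm{e}^a - \mathrm{e}^b| \le \mathrm{e}^{\max(a,b)}|a - b|$. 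To check $P_t^\Lambda h = \mathrm{e}^{\lambda t} h$, I would pass to the limit $s \to \infty$ in the semigroup identity $P_t^\Lambda(\mathrm{e}^{-\lambda s} P_s^\Lambda \mathbf{1}) = \mathrm{e}^{\lambda t} \cdot \mathrm{e}^{-\lambda(t+s)} P_{t+s}^\Lambda \mathbf{1}$, using that $P_t^\Lambda$ has operator norm $\le \mathrm{e}^{t\|\Lambda\|}$ on $\mathcal{B}_b(E)$ and hence preserves uniform convergence. For the left eigenmeasure, Lipschitz dependence of the ODE \eqref{eq:postODE} on its initial condition on bounded time intervals, combined with the flow identity $\mu_{t+s}^{(\mu_0)} = \mu_t^{(\mu_s^{(\mu_0)})}$ and Assumption \ref{assump:ergod_normalised}, gives $\mu_t^{(\mu_\infty)} = \mu_\infty$ for all $t$; whence $\mu_\infty P_t^\Lambda = \mu_\infty P_t^\Lambda(\mathbf{1})\, \mu_\infty = \mathrm{e}^{\lambda t} \mu_\infty$. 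The normalisation $\mu_\infty(h) = 1$ then follows by dominated convergence from $\mu_\infty P_t^\Lambda(\mathbf{1}) = \mathrm{e}^{\lambda t}$.

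For the ergodicity bound, write $\mu_0 P_t^\Lambda = \mu_0 P_t^\Lambda(\mathbf{1})\, \mu_t$ and decompose
\[
\mathrm{e}^{-\lambda t} \mu_0 P_t^\Lambda - \mu_0(h)\mu_\infty = \bigl(\mathrm{e}^{-\lambda t} \mu_0 P_t^\Lambda(\mathbf{1}) - \mu_0(h)\bigr)\mu_t + \mu_0(h)\bigl(\mu_t - \mu_\infty\bigr).
\]
The first summand has TV-norm bounded by $\|\mathrm{e}^{-\lambda t} P_t^\Lambda \mathbf{1} - h\| \le K\mathrm{e}^{-\gamma t}$ (uniformly in $\mu_0$), and the second by $\|h\|\, \|\mu_t - \mu_\infty\|_{\mathrm{TV}} \le \|h\| C \mathrm{e}^{-\gamma t}$, yielding the desired estimate with a constant independent of $\mu_0$.

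Uniqueness reduces to the uniqueness in Assumption \ref{assump:ergod_normalised}: any alternative triplet $(\mu'_\infty, h', \lambda')$ has $\mu'_\infty$ invariant under the normalised flow, forcing $\mu'_\infty = \mu_\infty$ and $\lambda' = \mu'_\infty(\Lambda) = \lambda$; then the identity $\mu_t^x(h') = \mathrm{e}^{\lambda t} h'(x)/P_t^\Lambda \mathbf{1}(x) \to h'(x)/h(x)$ together with $\mu_t^x(h') \to \mu_\infty(h') = 1$ gives $h' = h$. Finally, $\Lambda \le 0$ immediately yields $\lambda = \mu_\infty(\Lambda) \le 0$. The main obstacle I anticipate is justifying rigorously that $\mu_\infty$ is a fixed point of the normalised flow: Assumption \ref{assump:ergod_normalised} only asserts asymptotic convergence, and one must appeal to continuity of the ODE solution in its initial condition—which holds under Assumption \ref{assump:additive+symmetric} since the nonlinear drift is Lipschitz in TV—to propagate the convergence through the semigroup.
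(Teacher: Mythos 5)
Your proposal is correct, but it takes a genuinely different route from the paper: the paper gives no argument of its own and simply invokes Theorem 2.1 of Champagnat and Villemonais \cite{Villemonais2017}, which is precisely the statement that uniform exponential convergence of the conditioned (normalised) semigroup implies existence of the eigentriplet and uniform exponential convergence of the $h$-renormalised unnormalised semigroup. What you do instead is re-derive that implication from scratch in this discrete setting, starting from the mass identity $\mu_0 P_t^\Lambda(\pmb{1}) = \exp\{\int_0^t \mu_s(\Lambda)\,\mathrm{d}s\}$ (which the paper itself records as \eqref{eq:identity_Pt1} in the proof of Theorem \ref{thm:quadbound}), setting $\lambda = \mu_\infty(\Lambda)$, building $h$ as the uniform limit of $\mathrm{e}^{-\lambda t}P_t^\Lambda\pmb{1}$, and splitting $\mathrm{e}^{-\lambda t}\mu_0 P_t^\Lambda - \mu_0(h)\mu_\infty$ into a mass error and a shape error, each exponentially small by Assumption \ref{assump:ergod_normalised}. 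This buys self-containedness and makes the constants traceable to those of \ref{assump:ergod_normalised}, at the cost of redoing work the citation covers; the citation buys brevity and covers more general (non-discrete) settings. The one obstacle you flag --- that $\mu_\infty$ is a fixed point of the normalised flow --- does not really require the ODE well-posedness you invoke: the map $\nu \mapsto \nu P_t^\Lambda / \nu P_t^\Lambda(\pmb{1})$ is directly TV-Lipschitz on bounded time intervals because $P_t^\Lambda$ has operator norm at most $\mathrm{e}^{t\|\Lambda\|}$ and $\nu P_t^\Lambda(\pmb{1}) \ge \mathrm{e}^{-t\|\Lambda\|}$, so passing $s \to \infty$ in the flow identity already yields $\Phi_{0,t}(\mu_\infty) = \mu_\infty$. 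With that simplification your argument is complete and matches the conclusion of the cited theorem, including the uniqueness and the sign of $\lambda$.
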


This result is basically a consequence of Theorem 2.1 of \cite{Villemonais2017}.
Lemma \ref{corollary:exp_ergodicity_nonnormalised} establishes an exponential control on the speed of convergence of the unnormalised semigroup.
A similar estimate is stated by Angeli et al.\ \cite[Assumption 2.2]{Angeli2021} as hypothesis.
However, their assumption implies that the eigenfunction $h$ is constant, which in practice makes their assumption only valid when $\Lambda$ ($\mathcal{V}$ in their notation) is constant.

Let us define
\begin{equation}
	S_\mu(\phi) := \sum_{x,y \in E} (\phi(x) - \phi(y))^2 V_\mu^\mathrm{s}(x,y) \mu(x) \mu(y), \label{eq:defSmu}
\end{equation}
for every $\phi \in \mathcal{B}_b(E)$, and the operator $W_{t,T}$ for $t \le T$ as follows
\begin{equation}
	W_{t,T}	: \phi \mapsto \frac{P_{T-t}^\Lambda (\phi)}{ \mu_t \left( P_{T-t}^\Lambda(\pmb{1}) \right)}, \label{eq:defWtT_intro}
\end{equation}

Our last two results are addressed  to the study of the asymptotic square error of the approximation of $\mu_{T}$ by $m(\eta_T)$ when $T,N \rightarrow \infty$.
These results are particularly important when the Moran process is used for approximating a quasi-stationary distribution.
Let us define the asymptotic quadratic errors:
\begin{align*}
	\sigma^2_T(\phi) &:= \lim\limits_{N \rightarrow \infty}  N \mathbb{E} \left[ \big( m(\eta_T)(\phi) - \mu_{T}(\phi) \big)^2 \right], \nonumber \\
	\sigma^2_\infty(\phi) &:= \lim\limits_{T \rightarrow \infty} \sigma_T^2(\phi),
\end{align*}
for every $\phi \in \mathcal{B}_b(E)$.
First, we prove the asymptotic normality of the bias and we provide explicit expressions for $\sigma^2_T(\phi)$ and $\sigma^2_\infty(\phi)$.
Then, we use this expression to show how to define another Moran process approaching the same distribution $\mu_{\infty}$, with smaller or equal asymptotic square error.

In order to prove the asymptotic normality of the statistic $\sqrt{N} \big( m\big(\eta_T \big)(\phi) - \mu_T(\phi) \big)$, for every $T \ge 0$, we naturally need to ask, in addition to the law of large numbers established by Assumption \ref{assump:initial_condition}, for the existence of a central limit theorem on the initial empirical distribution, as stated in the following hypothesis.

\begin{customthm}{$(\mathrm{I'})$}[Asymptotic normality for initial empirical distribution]\label{assump:initial_condition_as_normality}
	For every $\phi \in \mathcal{B}_b(E)$, the empirical measure induced by the particle process at $t = 0$ satisfy the following condition:
	$\sqrt{N} \big( m(\eta_0)(\phi) - \mu_0(\phi) \big)$ converges in law towards a centred Gaussian distribution of variance $\mu_0(\phi^2)$, when $N \rightarrow \infty$.
\end{customthm}

Analogously to Lemma \ref{lemma:control_initial_condition}, we have that Assumption \ref{assump:initial_condition_as_normality} is verified when initially the $N$ particles are sampled independently according to $\mu_0 \in \mathcal{M}_1(E)$.
The proof of this result is a consequence of the classical functional central limit theorem for martingales.

\begin{theorem}[Asymptotic normality]\label{thm:quadbound}
	Suppose that Assumptions \ref{assump:initial_condition}, \ref{assump:initial_condition_as_normality}, \ref{assump:additive+symmetric} and \ref{assump:ergod_normalised} are verified. 
	Then, for every $\phi \in \mathcal{B}_b(E)$ and $T \ge 0$, we have that
	\(
	\sqrt{N} \big( m(\eta_T)(\phi) - \mu_T(\phi) \big)
	\)
	converges in law, when $N$ goes to infinity, towards a Gaussian centred random variable of variance
	\[
	\sigma^2_T(\phi) = \operatorname{Var}_{\mu_{T}}(\phi) + \int_0^T  S_{\mu_s} \big( W_{s,T}(\bar{\phi}_T) \big) \mathrm{d}s
	+ 2 \int_0^T  \mu_s \Big(  W_{s,T}(\bar{\phi}_T)^2 \Big( V^\mathrm{b}_{\mu_s} + \mu_{s}\big(V^\mathrm{d}_{\mu_s}\big) \Big) \Big) \mathrm{d}s,
	\]
	where $\operatorname{Var}_{\mu_{T}}$ stands for the variance with respect to $\mu_T$, 
	$\bar{\phi}_T := \phi - \mu_T(\phi)$
	and $S_{\mu}$ and $W_{t,T}$ are as defined in \eqref{eq:defSmu} and \eqref{eq:defWtT_intro}, respectively.
	Moreover,
	\begin{align*}
		\sigma^2_\infty(\phi) = & \operatorname{Var}_{\mu_{\infty}}(\phi)  
		+ 
		\int_0^\infty \mathrm{e}^{- 2 \lambda s} S_{\mu_\infty} \big( P_s^\Lambda(\bar{\phi}_\infty) \big) \mathrm{d}s + 2 \int_0^\infty \mathrm{e}^{-2 \lambda s} \mu_\infty\Big(  P_s^{\Lambda}(\bar{\phi}_\infty)^2 \Big( V^\mathrm{b}_{\mu_\infty} + \mu_{\infty}\big(V^\mathrm{d}_{\mu_\infty}\big) \Big) \Big) \mathrm{d}s,
	\end{align*}
	where $\operatorname{Var}_{\mu_{\infty}}$ stands for the variance with respect to $\mu_\infty$, also 
	$\bar{\phi}_\infty := \phi - \mu_{\infty}(\phi)$ and
	$\lambda$ is the eigenvalue in the statement of Lemma \ref{corollary:exp_ergodicity_nonnormalised}.
\end{theorem}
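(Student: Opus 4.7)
Fix $T > 0$ and $\phi \in \mathcal{B}_b(E)$, set $\bar{\phi}_T := \phi - \mu_T(\phi)$, and introduce the time-dependent test function $\psi_s := W_{s,T}(\bar{\phi}_T)$ for $s \in [0, T]$. By the flow property of the normalised Feynman\,--\,Kac semigroup, $\mu_s(\psi_s) = \mu_T(\bar{\phi}_T) = 0$ for every $s \in [0,T]$, while $\psi_T = \bar{\phi}_T$; a direct computation then shows that $\psi_s$ solves the backward equation $\partial_s \psi_s = -(Q+\Lambda)\psi_s + \mu_s(\Lambda)\psi_s$. Applying Dynkin's formula to $F(s,\eta) = m(\eta)(\psi_s)$ and using \eqref{eq:gen_mut}\,--\,\eqref{eq:gen_sel}, which under Assumption \ref{assump:additive+symmetric} reduce to $\mathcal{Q}^{\mathrm{sel}}m(\psi_s)(\eta) = m(\eta)(\Lambda\psi_s) - m(\eta)(\Lambda)\, m(\eta)(\psi_s)$, all terms involving $(Q+\Lambda)\psi_s$ cancel, leaving
\[
m(\eta_T)(\bar{\phi}_T) = m(\eta_0)(\psi_0) + \int_0^T \bigl(\mu_s(\Lambda) - m(\eta_s)(\Lambda)\bigr) m(\eta_s)(\psi_s)\, \mathrm{d}s + M^N_T,
\]
where $M^N$ is a purely discontinuous martingale with $M^N_0 = 0$. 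By Cauchy\,--\,Schwarz and Theorem \ref{thm:uniformLp} with $p=2$, each of the two factors in the drift integrand is $O(1/\sqrt{N})$ in $L^2$ uniformly in $s$, so the drift is $O(1/N)$ in $L^1$ and $\sqrt{N}$ times it is negligible.

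The predictable bracket of $M^N$ is explicitly
\[
N[M^N]_T = \int_0^T \sum_{x,y}(\psi_s(y)-\psi_s(x))^2 m_x(\eta_s)\bigl[Q_{x,y} + m_y(\eta_s)\, V_{m(\eta_s)}(x,y)\bigr]\, \mathrm{d}s,
\]
and by Theorem \ref{thm:uniformLp} together with dominated convergence it converges in probability to
\[
v_T^2 := \int_0^T \mu_s(\Gamma_Q \psi_s)\, \mathrm{d}s + \int_0^T \sum_{x,y}(\psi_s(y)-\psi_s(x))^2 \mu_s(x)\mu_s(y)\, V_{\mu_s}(x,y)\, \mathrm{d}s,
\]
where $\Gamma_Q(\psi)(x) := \sum_y Q_{x,y}(\psi(y) - \psi(x))^2$ is the carré du champs of $Q$. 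Since the jumps of $\sqrt{N} M^N$ are uniformly $O(1/\sqrt{N})$, the Lindeberg condition is automatic and, conditionally on $\eta_0$, the martingale CLT for jump processes yields $\sqrt{N} M^N_T \Rightarrow \mathcal{N}(0, v_T^2)$ (the limiting bracket being deterministic). Combined with Assumption \ref{assump:initial_condition_as_normality} applied to $\psi_0$ (which satisfies $\mu_0(\psi_0) = 0$), this delivers, by asymptotic independence, $\sqrt{N}(m(\eta_T)(\phi) - \mu_T(\phi)) \Rightarrow \mathcal{N}\bigl(0,\, \mu_0(\psi_0^2) + v_T^2\bigr)$.

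To recognise the stated formula, combine equation \eqref{eq:fokker-plack} for $\mu_s$ with the backward equation for $\psi_s$ to derive the key identity $\partial_s \mu_s(\psi_s^2) = \mu_s(\Gamma_Q \psi_s) - \mu_s(\Lambda\psi_s^2) + \mu_s(\Lambda)\mu_s(\psi_s^2)$. Integrating from $0$ to $T$ and using $\mu_T(\psi_T^2) = \mathrm{Var}_{\mu_T}(\phi)$, $\Lambda = V^{\mathrm{b}}_{\mu} - V^{\mathrm{d}}_{\mu}$, $V_\mu = V^{\mathrm{d}}_\mu + V^{\mathrm{b}}_\mu + V^{\mathrm{s}}_\mu$ and $\mu_s(\psi_s) = 0$, a direct bookkeeping reorganises $\mu_0(\psi_0^2) + v_T^2$ into the expression
\[
\mathrm{Var}_{\mu_{T}}(\phi) + \int_0^T S_{\mu_s}(\psi_s)\, \mathrm{d}s + 2\int_0^T \mu_s\Bigl(\psi_s^2\bigl(V^{\mathrm{b}}_{\mu_s} + \mu_s(V^{\mathrm{d}}_{\mu_s})\bigr)\Bigr) \mathrm{d}s,
\]
which is exactly $\sigma_T^2(\phi)$. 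For the limit $T \to \infty$, the change of variables $u = T-s$ gives $\psi_{T-u} = P_u^{\Lambda}(\bar{\phi}_T)/\mu_{T-u}(P_u^{\Lambda}(\pmb{1}))$; Lemma \ref{corollary:exp_ergodicity_nonnormalised} yields $\mu_{T-u}(P_u^{\Lambda}(\pmb{1})) \to \mathrm{e}^{\lambda u}$ and $P_u^{\Lambda}(\bar{\phi}_T) \to P_u^{\Lambda}(\bar{\phi}_\infty)$, so $\psi_{T-u} \to \mathrm{e}^{-\lambda u} P_u^{\Lambda}(\bar{\phi}_\infty)$, and this limit decays exponentially in $u$ because the leading Perron\,--\,Frobenius mode vanishes (since $\mu_\infty(\bar{\phi}_\infty) = 0$); dominated convergence then yields the stated expression for $\sigma_\infty^2(\phi)$.

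\textbf{Main obstacle.} The principal technical challenge is the joint CLT: one must control the bracket $N[M^N]_T$ in probability (which relies crucially on the uniform in time propagation of chaos, Theorem \ref{thm:uniformLp}) and combine the martingale CLT for $\sqrt{N} M^N_T$ with the initial CLT from Assumption \ref{assump:initial_condition_as_normality} through a conditional argument on $\eta_0$, while discarding the drift term. A secondary delicate point is the $T \to \infty$ analysis, where the cancellation of the leading eigenmode of $P_t^\Lambda$ applied to $\bar{\phi}_T$ forces one to exploit the explicit exponential rate of convergence in Lemma \ref{corollary:exp_ergodicity_nonnormalised} in order to secure integrability of the resulting limit integrands at $u = \infty$.
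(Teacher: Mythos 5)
Your proposal is correct and follows essentially the same route as the paper: the same backward-evolved test function $W_{s,T}(\bar{\phi}_T)$, the same use of Theorem \ref{thm:uniformLp} together with Cauchy\,--\,Schwarz to discard the drift and to identify the deterministic limit of the bracket, a martingale CLT combined with Assumption \ref{assump:initial_condition_as_normality}, and the same change of variables $u = T-s$ for the $T \to \infty$ limit. The only cosmetic differences are that the paper absorbs the initial value into the shifted martingale $\widetilde{\mathcal{M}}$ and invokes the Jacod\,--\,Shiryaev functional CLT (Proposition \ref{prop:CLT_martingale}) instead of conditioning on $\eta_0$, and it reorganises the limiting bracket through the stochastic decomposition \eqref{eq:predictable-quad-variation_decomposition} rather than through your deterministic identity for $\partial_s \mu_s(\psi_s^2)$ --- both computations yield the same $\sigma_T^2(\phi)$.
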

The proof of Theorem \ref{thm:quadbound} can be found in Section \ref{thm:proof_thm6}.
Note that the two integrals in the expression of $\sigma^2_\infty(\phi)$ in Theorem \ref{thm:quadbound} converge as a consequence of Lemma \ref{corollary:exp_ergodicity_nonnormalised}.

Let us mention the relation between Theorem \ref{thm:quadbound} and some existing results in the literature.
When $V^\mathrm{s}$ is null, and the selection rates do not depend on $\mu$, our result is related to Proposition 3.7 in \cite{MR1956078}.
Indeed, for the particular choice of the parameters of the model in \cite{MR1956078} as follows: $V = 2 V^\mathrm{b}$, $V' = 2 V^\mathrm{d}$ and $\rho = 1/2$, Theorem \ref{thm:quadbound} can be obtained from Proposition 3.7 in \cite{MR1956078}.
See also the multivariate fluctuation study in \cite[\S\ 3.3.2]{DelMoralMiclo2000}. 

Moreover, when $V_\mu^\mathrm{b}$ and $V_\mu^\mathrm{s}$ are null and thus $\Lambda = - V^\mathrm{d} \le 0$, we get
\[
\sigma^2_\infty(\phi) = \operatorname{Var}_{\mu_{\infty}}(\phi)  - 2 \lambda \int_0^\infty \mathrm{e}^{- 2  \lambda s}  \operatorname{Var}_{\mu_\infty} \left( P^\Lambda_s(\phi) \right) \mathrm{d}s.
\]
When the process $(\eta_t)_{t \ge 0}$ is ergodic and converges in law to some random variable $\eta_\infty$, when $t \to \infty$, Theorem \ref{thm:quadbound} states that $\sqrt{N} \big(m(\eta_\infty)(\phi) - \mu_\infty(\phi)\big)$ converges to a centred Gaussian law of variance $\sigma^2_\infty(\phi)$, when $N\to \infty$. 
Indeed, recall that a Gaussian sequence converges in law if their first two moments converge. 
In particular, we recover (and extend) the result of Lellièvre et al.\ \cite[Thm.\ 2.4]{Lelievre2018} for finite state spaces.
Note that the negative constant $\lambda$ in the previous expression for $\sigma^2_\infty(\phi)$ is the opposite of that in \cite[Thm.\ 2.4]{Lelievre2018}.

The expression for $\sigma^2_\infty(\phi)$, when $V_\mu^\mathrm{s} = 0$, is also similar to the expression for the asymptotic square error in Theorem 4.4 in \cite{MR2262944}.
However, the results in \cite{MR2262944} do not include the asymptotic normality we prove in Theorem \ref{thm:quadbound}.
See also Corollary 2.7 and Remark 2.8 \cite{Cerou2020} for a central limit theorem for the empirical measure induced by Fleming\,--\,Viot particle systems.

Note that the three summands in the expression of $\sigma^2_T(\phi)$ in Theorem \ref{thm:quadbound} are positive, for every $T \ge 0$.
Moreover, the limit $(\mu_t)_{t \ge 0}$ is invariant by the choice of the symmetric component $V_\mu^\mathrm{s}$ in Assumption \ref{assump:additive+symmetric}.
As a consequence, for a given selection rate $V_\mu$ we can obtain another Moran process approaching the same limit distribution taking the selection rate $V_\mu - \Sigma_\mu \ge 0$, where $\Sigma_\mu$ is a symmetric function in $\mathcal{B}_b(E \times E)$.
We thus get the following result.

\begin{corollary}[Moran process with smaller asymptotic square error]\label{cor:small_asymp_error}
	Suppose that Assumptions \ref{assump:initial_condition}, \ref{assump:initial_condition_as_normality}, \ref{assump:additive+symmetric} and \ref{assump:ergod_normalised} are verified. 
	Let $(\eta_t)_{t \ge 0}$ and $(\eta_t^\star)_{t \ge 0}$ be the Moran processes with the same mutation rates and selection rates given by $V_\mu$ and $V_\mu - \Sigma_\mu$, respectively, where
	\[
	\Sigma_\mu(x,y) :=  \min\Big\{ V_\mu^{\mathrm{d}}(x), V_\mu^{\mathrm{b}}(x) \Big\} \pmb{1}_{ \{x\} } + \min\Big\{ V_\mu^{\mathrm{d}}(y), V_\mu^{\mathrm{b}}(y) \Big\} \pmb{1}_{ \{y\} } + V_\mu^{\mathrm{s}}(x,y),
	\]
	where $\pmb{1}_A$ stands for the indicator function on $A \subset E$.
	Then,
	\[
	\lim\limits_{N \rightarrow \infty}  N \mathbb{E} \left[ \big( m(\eta_T^\star)(\phi) - \mu_{T}(\phi) \big)^2 \right]
	\le
	\lim\limits_{N \rightarrow \infty}  N \mathbb{E} \left[ \big( m(\eta_T)(\phi) - \mu_{T}(\phi) \big)^2 \right],
	\]
	for all $T \ge 0$.
\end{corollary}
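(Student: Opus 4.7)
My plan is to apply Theorem \ref{thm:quadbound} to both processes $(\eta_t)_{t \ge 0}$ and $(\eta_t^\star)_{t \ge 0}$ and then compare the three summands in the resulting expressions for $\sigma^2_T(\phi)$ term by term. The strategy relies on the fact that, beyond $Q$, $\Lambda$, and the initial condition, the only data entering the expression for $\sigma^2_T(\phi)$ are the components $V^\mathrm{d}$, $V^\mathrm{b}$ and $V^\mathrm{s}$ separately, and subtracting $\Sigma_\mu$ decreases each of them in a favourable direction while preserving $\Lambda$.

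First, I would verify that $(\eta_t^\star)$ still falls under Assumption \ref{assump:additive+symmetric}. A direct computation using $V_\mu^\mathrm{b} - V_\mu^\mathrm{d} = \Lambda$ gives
\[
V_\mu^\star(x,y) = V_\mu(x,y) - \Sigma_\mu(x,y) = \bigl(V_\mu^\mathrm{d}(x) - V_\mu^\mathrm{b}(x)\bigr)^+ + \bigl(V_\mu^\mathrm{b}(y) - V_\mu^\mathrm{d}(y)\bigr)^+ = \Lambda(x)^- + \Lambda(y)^+,
\]
so the starred selection rate admits the decomposition $V_\mu^{\star,\mathrm{d}}(x) = \Lambda(x)^-$, $V_\mu^{\star,\mathrm{b}}(y) = \Lambda(y)^+$, $V_\mu^{\star,\mathrm{s}} \equiv 0$, which is nonnegative, independent of $\mu$, preserves $V^{\star,\mathrm{b}} - V^{\star,\mathrm{d}} = \Lambda$, and thus satisfies Assumption \ref{assump:additive+symmetric} with the same $\Lambda$ as the original process. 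Since the normalised Feynman\,--\,Kac flow defined by \eqref{def:eq_normalised_semigroup} depends only on $Q$ and $\Lambda$, the limit $(\mu_t)_{t \ge 0}$ is identical for both processes, and Assumption \ref{assump:ergod_normalised} therefore transfers to the starred process for free.

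Applying Theorem \ref{thm:quadbound} to both processes produces expressions for $\sigma^2_T(\phi)$ and $\sigma^{\star,2}_T(\phi)$ relative to the \emph{same} limit $(\mu_t)$, the same $\bar\phi_T$, and the same operator $W_{s,T}$. The first summand $\operatorname{Var}_{\mu_T}(\phi)$ coincides. The second summand $\int_0^T S_{\mu_s}(W_{s,T}(\bar\phi_T))\,\mathrm{d}s$ vanishes for the starred process because $V_\mu^{\star,\mathrm{s}} \equiv 0$ annihilates the integrand in \eqref{eq:defSmu}, while it stays nonnegative for the original process by inspection of \eqref{eq:defSmu}. For the third summand, the pointwise bounds $\Lambda^+ \le V_\mu^\mathrm{b}$ and $\Lambda^- \le V_\mu^\mathrm{d}$ follow at once from $\Lambda = V_\mu^\mathrm{b} - V_\mu^\mathrm{d}$ together with $V_\mu^\mathrm{b}, V_\mu^\mathrm{d} \ge 0$; integrating these against the nonnegative quantity $\mu_s\bigl(W_{s,T}(\bar\phi_T)^2 \,\cdot\,\bigr)$ (and, for the $\mu_s(V^\mathrm{d}_{\mu_s})$ part, against $\mu_s$) and then in $s$ delivers the desired inequality between third summands.

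The main, rather mild, obstacle is the bookkeeping in the first step: recognising the correct decomposition of $V_\mu^\star$ and checking that $V^{\star,\mathrm{d}}$, $V^{\star,\mathrm{b}}$, $V^{\star,\mathrm{s}}$ verify the structural requirements of Assumption \ref{assump:additive+symmetric} with the same $\Lambda$ as the original. Once the two asymptotic variances are placed side by side against the common flow $(\mu_t)$, termwise domination is immediate and no further estimate is needed.
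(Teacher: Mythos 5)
Your proof is correct and takes essentially the same route as the paper: the paper's own argument is just the observation that $V_\mu - \Sigma_\mu$ again satisfies Assumption \ref{assump:additive+symmetric} (with $V^{\star,\mathrm{d}} = \Lambda^-$, $V^{\star,\mathrm{b}} = \Lambda^+$, $V^{\star,\mathrm{s}} \equiv 0$ and the same $\Lambda$, hence the same limit flow $(\mu_t)_{t\ge 0}$ and the same $W_{s,T}$), so that Theorem \ref{thm:quadbound} applies to both processes and the three summands of $\sigma^2_T(\phi)$ are compared termwise. Your write-up merely makes explicit the pointwise dominations $\Lambda^+ \le V_\mu^{\mathrm{b}}$, $\Lambda^- \le V_\mu^{\mathrm{d}}$ and the vanishing of the symmetric contribution, which the paper leaves as a ``simple consequence.''
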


Note that the selection rate $V_\mu-\Sigma_\mu$ in the statement of Corollary \ref{cor:small_asymp_error} satisfies Assumption \ref{assump:additive+symmetric}.
The proof of the previous result is thus a simple consequence of Theorem \ref{thm:quadbound}.
In Example \ref{example:twi_sites}, we discuss the application of this result to the simple case of the bi-allelic Moran model, that is, when the cardinality of $E$ is $2$.

\subsection{Examples}\label{sec:examples}

In this section we consider several examples where Assumption \ref{assump:ergod_normalised} holds, for the process with additive selection satisfying \ref{assump:additive+symmetric} and \ref{assump:gral_slection rate}.

The first example we consider is precisely when $E = \{1,2\}$.
This example offers us the opportunity to compare our result with the existing results on bi-allelic Moran models and the Fleming\,--\,Viot particle process approximating the QSD of an absorbing Markov chain with two transient states.  

\begin{example}[Two-allelic Moran model]\label{example:twi_sites}
	Consider the two-allelic Moran model on $E = \{1,2\}$ with mutation rate matrix
	\[
	Q = \left(
	\begin{array}{rr}
		-a &  a\\
		b & -b
	\end{array}
	\right)	
	\]
	and selection rates $V_{1,2} = p$ and $V_{2,1} = q$, with $a,b > 0$ and $p,q \ge 0$.
	Let us assume, without loss of generality, that $p \le q$. 
	
	The empirical probability measure induced by this Moran process approaches the QSD of the absorbing Markov chain on $E \cup \{ \partial \}$, where $\partial$ is an absorbing state, with infinitesimal generator
	\[
	\left(
	\begin{array}{ccc}
		-(a + p) &  a & p \\
		b & -(b + q) & q \\
		0 & 0 & 0
	\end{array}
	\right).
	\]
	See \cite{cordero_deterministic_2017} and \cite[\S\ 3]{cloez_fleming-viot_2016} for a deeper treatment of this model and the limit behaviour of the interacting particle process approaching its QSD.
	
	Theorem \ref{thm:propagation_chaos} applied in this case improves Proposition 3.1 in \cite{cordero_deterministic_2017} and Theorem 3.1 (see also Remark 3.2) in \cite{2015Benaim&Cloez}.
	Furthermore, Theorem \ref{thm:uniformLp}, and also \eqref{eq:conseq_of_main_thm_st_distrib}, improve the control of the speed of convergence to stationarity of the bounds obtained in \cite[Cor.\ 1.5]{cloez_quantitative_2016} and \cite[Thm.\ 2.4]{zbMATH07298000}.

	Likewise, as a consequence of Corollary \ref{cor:small_asymp_error}, we have that the Moran model with the same mutation rate matrix $Q$ and with selection rates $V_{1,2} = 0$ and $V_{2,1} = q-p$, approaches the same QSD but with smaller asymptotic square error.
	
	Consider now the case when $p = q$.
	Then, the empirical distribution induced by the particle system approaches the stationary distribution of the process generated by $Q$.
	When $E$ is finite, the results about the spectrum of the generator $\mathcal{Q}$ in \cite{2020arXiv201008809C} imply that the asymptotic ergodicity is independent of the value of $p$.
	Besides, Corollary \ref{cor:small_asymp_error} implies that a minimal asymptotic variance is obtained when there is no selection, that is, when the particle system is simply given by $N$ independent particles, where each of them is driven by $Q$.
\end{example}

We now focus on the classical birth and death Markov chain.
The existence and uniqueness of QSD for these models have been well understood.
We rely on existing results to find explicit conditions on the parameters of the birth and death chain that are equivalent to the existence of a unique QSD and the uniform exponential convergence.

\begin{example}[Birth and death chain]\label{example:B-D_chain}
	
	Consider two positive sequences $(b_x)_{x \ge 1}$ and $(d_x)_{x \ge 1}$ and the Markov chain on $\mathbb{N}$ with rate matrix
	\[
	Q_{x,y} := \left\{
	\begin{array}{cl}
		b_x & \text{ if } x \ge 1 \text{ and } y = x + 1 \\
		d_x & \text{ if } x \ge 2 \text{ and } y = x - 1 \\
		0 & \text{ otherwise,}
	\end{array}
	\right.	
	\]
	and $\Lambda := d_1 \mathbf{1}_{\{x = 1\}}$.
	Note that $0$ is an absorbing state and  $\mathbb{N}$ is a transient class. 
	Van Doorn \cite[Thm.\ 3.2]{vanDorn1991} has found an explicit condition characterising the three possible cases: there is no QSD, there exists a unique QSD or there exists an infinite continuum of QSDs.
	See also \cite[\S\ 4]{meleard_quasi-stationary_2012}.
	Furthermore, Mart\'inez et al.\ \cite[Thm.\ 2]{martinez2014} have proved that the existence of a unique QSD is in fact equivalent to the uniform exponential convergence of the law of the conditioned process to its QSD, i.e.\ Assumption \ref{assump:ergod_normalised}.
	In addition, this occurs if and only if
	\begin{equation}\label{condition:uniqueQSD_BDchains}
		\sum\limits_{k \ge 2} \frac{1}{d_k \alpha_k} \sum\limits_{r \ge k} \alpha_r < \infty,
	\end{equation}
	where $\alpha_r := \prod\limits_{i = 1}^{r-1} b_i \bigg/ \prod\limits_{i = 2}^{r} d_i$.
	We refer also to \cite[\S\ 4.1]{zbMATH06540706}, where the uniform exponential convergence is ensured for some generalisations of the classical birth and death chain.
\end{example}

We end this section presenting two quantitative criteria on the transition rates and on the spectral elements, respectively, ensuring the uniform exponential convergence in \ref{assump:ergod_normalised}.

\begin{example}[A criterion on the mutation and selection rates]
	
	We next describe a criterion on the transition rates, which is Theorem 3 in \cite{martinez2014}.
	Assume that \ref{assump:additive+symmetric} is verified, and the following condition holds: there exists a finite subset $K \subset E$ such that
	\[
	\inf\limits_{y \in E \setminus K} \left( \Lambda(y) + \sum_{x \in K} Q_{y,x} \right) > \sup_{y \in E} \Lambda(y).
	\]
	Then, \ref{assump:ergod_normalised} holds.
	This provides an easy condition on the mutation rates and $\Lambda$ to verify Assumption \ref{assump:ergod_normalised}, which is applicable to a wide range of Moran processes with discrete countable state space.
	See also \cite[Thm.\ 1.1]{cloez_quantitative_2016}, where a stronger condition is asked in order to provide, via a coupling technique, explicit constants for the upper bound in \ref{assump:ergod_normalised}.
	
\end{example}

\begin{example}[A spectral criterion]
	Assume there exists a triplet $(\mu_{\infty}, h, \lambda) \in \mathcal{M}_1(E) \times \mathcal{B}_b(E) \times \mathbb{R}$, of eigenelements of $Q+ \Lambda$ such that $\lambda$ is an eigenvalue of $Q + \Lambda$, $h$ is strictly positive, $\mu_{\infty}(h) = 1$, and
	\[
	\mu_{\infty} P_t^\Lambda = \mathrm{e}^{\lambda t} \mu_{\infty} \text{ and } P_t^\Lambda (h) = \mathrm{e}^{\lambda t} h.
	\] 
	Note that these are the eigenelements in the statement of Lemma \ref{corollary:exp_ergodicity_nonnormalised}.
	Let us also assume
	\(
	\|h^{-1}\| \le \infty,
	\)
	which is always true if $E$ is finite, and furthermore, there exists $\epsilon > 0$ such that the set
	\[
	K_\epsilon := \{ x \in E: \Lambda(x) \ge \lambda - \epsilon \}	
	\]
	is finite. 
	Then, \ref{assump:ergod_normalised} is verified.
	
	The proof is based on the methods in \cite{downetal1996}, and is very similar to the proofs of Proposition 3.2 in \cite{MR2262944} and Proposition A.5 in \cite{Angeli2021}.
	Consider the Doob's $h$-transform 
	\[
	P_ t^{\Lambda, h} := \frac{1}{h} \mathrm{e}^{- \lambda t} P_t^{\Lambda} (h \cdot),	
	\]
	which is the semigroup associated to an irreducible continuous-time Markov chain on $E$ with generator $Q^h$ acting on every $\phi \in \mathcal{M}_1(E)$ as follows
	\[
	Q^h (\phi) = \frac{1}{h} \big( Q + \Lambda - \lambda \big)(h \phi).	
	\]
	Furthermore, the process driven by $Q^h$ has stationary distribution $\mu_\infty^h \in \mathcal{M}_1(E)$, satisfying $\mu_\infty^h(\phi) = \mu_{\infty}(h \phi)$, for every $\phi \in \mathcal{B}_b(E)$.
	Now, note that $h^{-1}$ is bounded on $K_{\epsilon}$, and consequently there exists $\beta > 0$ such that
	\[
	Q^h (h^{-1}) = \frac{\Lambda - \lambda}{h} \le - \epsilon h^{-1} + \beta \mathbb{1}_{K_\epsilon}.
	\]
	Thus, condition $(\tilde{D})$ in \cite{downetal1996} is verified and using their Theorem 5.2-(c) we get the $h^{-1}$-uniform exponential ergodicity of $P_t^{\Lambda, h}$ as follows
	\[
	\sup_{|g| \le h^{-1}} \left| \frac{1}{h(x)} \mathrm{e}^{- \lambda t} \delta_x P_t^{\Lambda}(h g) - \mu_\infty(h g) \right| \le \frac{C}{h(x)} \rho^{t}.
	\]
	Multiplying by $h(x)$ the previous inequality and taking $\phi = h g \in \mathcal{B}_1(E)$, we get the uniform exponential ergodicity \eqref{eq:assump_ergo_FeynmanKac}.
	Finally, it is not difficult to verify that Assumption \ref{assump:ergod_normalised} also holds, using the exponential ergodicity \eqref{eq:assump_ergo_FeynmanKac} and the fact that $\|h^{-1}\| < \infty$.
\end{example}

\begin{remark}
	In \cite[Appendix]{Angeli2021}, the authors state a similar result, but they do not include the fact that $\|h^{-1}\|$ is bounded in their hypothesis.
	We have not found or understood the  arguments making the authors claim that $\|h^{-1}\|$ is bounded when the state space is locally compact \cite[p.\ 150]{Angeli2021}.
	We next provide an example of a birth and death chain whose generator allows an unbounded eigenfunction associated to its greatest eigenvalue.
	Indeed, let us consider the following parameters for the birth and death chain in Example \ref{example:B-D_chain}: 
	$b_i = b$, and $d_i = d$, for all $i \ge 2$, with $b< d$. 
	Moreover, take $b_1 > b$ and $d_1 = d(\mathrm{e} - 1)$.
	Hence, taking $h : n \in \mathbb{N} \mapsto \mathrm{e}^{-n}$ we get $\big( Q + \Lambda \big) (h) = \lambda h$, for $\lambda = b(\mathrm{e}^{-1} - 1 ) + d (\mathrm{e} - 1) > 0$.
	Moreover, $K_\epsilon = \{1\}$ is finite (compact), but $\|r^{-1}\| = \infty$.
	In fact, the infinite sum \eqref{condition:uniqueQSD_BDchains} diverges, thus this birth and death chain allows an infinite number of QSDs (cf.\ \cite[Thm.\ 3.2]{vanDorn1991}).
\end{remark}

\subsection{Final remarks}

The fact that the state space $E$ is countable and discrete is not necessary for our proofs. 
Therefore, we expect to be able to extend all our results to more general Markov processes following the same methods.

There are lots of possible directions to continue this research.
Maybe, the more natural is to weaken the condition  \ref{assump:ergod_normalised} and consider the case where there exists a minimal QSD but the exponential convergence is not uniform on $\mathcal{M}_1(E)$.
Lots of research have been done for controlling the domains of attraction of the minimal QSD.
See for example the works of Champagnat and Villemonais \cite{Villemonais2015B&D,ChampagnatVillemonaisP2017a,Villemonais2020Rpositive,Villemonais2020Rpositive:Erratum,zbMATH07298000}
and also the related works of Bansaye et al.\ \cite{Bansaye2019,zbMATH07181472}, and the references therein.
Another interesting research direction is to improve the upper bound constants in Theorem \ref{thm:propagation_chaos}.
In this sense, the results of Arnaudon and Del Moral \cite[Thm.\ 5.10 and Cor.\ 5.12]{MR4193898} suggest that a bound of type $C_p T$ could hold.
Hence, a future research direction would be to combine the approach of \cite{MR4193898} and this paper to improve the upper bound in Theorem \ref{thm:propagation_chaos}.
Moreover, the results in \cite[\S\ 5]{MR4193898} could also be useful to obtain exponential concentration inequalities, which is a natural continuation of the research on the long time behaviour of the empirical measure induced by Moran type particle processes.

\section{Proof of the main results}\label{sec:proofs}

This section contains the proofs of Theorems \ref{thm:propagation_chaos} and \ref{thm:quadbound}, which are, in our opinion, the main results in the present article with original proof methods.
In Section \ref{sec:martingale} we briefly study the martingale problem associated to the generator $\mathcal{Q}_N$.
Then, in Sections \ref{sec:proofThm1} and \ref{thm:proof_thm6} we prove Theorems \ref{thm:propagation_chaos} and \ref{thm:quadbound}, respectively.
Other comments, results and proofs are deferred to the appendices.

\subsection{The associated martingale problem}\label{sec:martingale}

For a Markovian generator $L$, its associated ``carr\'e-du-champ'' operator, denoted $\Gamma_L$, is defined by 
\[
\Gamma_L : \phi \mapsto L (\phi^2) - 2 \phi L \phi.
\]
See, for example, \cite[Def.\ 2.5.1]{zbMATH01633816} for more details on the theory related to this operator.

It is not difficult to prove that $\Gamma_{\mathcal{Q}}$ satisfies
\[
\Gamma_{\mathcal{Q}} (\psi)(\eta) = \sum_{x \in E} \eta(x) \sum_{y \in E} \left( Q_{x,y} + V_{m(\eta)}(x,y) \frac{\eta(y)}{N} \right) [\psi(\eta - \mathbf{e}_x + \mathbf{e}_y) - \psi(\eta)]^2,
\]
where for every $\eta \in \mathcal{E}_N$.
We recall that $m(\eta)$ denotes the empirical distribution induced by $\eta \in \mathcal{E}_N$.
Moreover, $m(\eta)(\phi)$ stands for the mean of $\phi$ with respect to $m(\eta)$, for every $\phi \in \mathcal{B}_b(E)$.
Suppose that one of Assumptions \ref{assump:gral_slection rate} or \ref{assump:additive+symmetric} is verified.
In either case, let us denote $\widetilde{V}_\mu := V_\mu - V^\mathrm{s}_\mu$.

\begin{lemma} \label{lemma:1}
	Suppose that one of Assumptions \ref{assump:gral_slection rate} or \ref{assump:additive+symmetric} is verified.
	We have
	\begin{align*}
		\mathcal{Q}(m(\cdot) (\phi)) &= m(\cdot) \left( \widetilde{Q}_{m(\cdot)} (\phi) \right),\\
		\Gamma_{\mathcal{Q}} \big(m(\cdot)(\phi)\big) &= \frac{1}{N} m(\cdot) \left( \Gamma_{Q_{m(\cdot)}} (\phi) \right),
	\end{align*}
	where
	\begin{align*}
		\widetilde{Q}_{\mu} \phi: x &\mapsto (Q \phi)(x) + \sum_{y \in E} \mu(y) \widetilde{V}_\mu(x,y) [\phi(y)-\phi(x)],\\
		Q_{\mu} \phi: x &\mapsto (Q \phi)(x) + \sum_{y \in E} \mu(y) V_{\mu}(x,y) [\phi(y)-\phi(x)],
	\end{align*}
	for every $\phi \in \mathcal{B}_b(E)$ and all $x \in E$.
\end{lemma}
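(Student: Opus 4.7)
The plan is to split the computation along the decomposition $\mathcal{Q} = \mathcal{Q}^{\mathrm{mut}} + \mathcal{Q}^{\mathrm{sel}}$ and exploit the key observation that the test function $\psi(\eta) = m(\eta)(\phi) = \frac{1}{N}\sum_{x \in E}\eta(x)\phi(x)$ is linear in $\eta$, so a single jump $\eta \mapsto \eta - \mathbf{e}_x + \mathbf{e}_y$ produces the simple increment
\[
m(\eta - \mathbf{e}_x + \mathbf{e}_y)(\phi) - m(\eta)(\phi) = \frac{1}{N}\bigl(\phi(y) - \phi(x)\bigr).
\]
Substituting this into the formula for $\mathcal{Q}^{\mathrm{mut}}$ yields, after a change of summation order,
\(
(\mathcal{Q}^{\mathrm{mut}} m(\cdot)(\phi))(\eta) = \sum_{x \in E} m_x(\eta)(Q\phi)(x) = m(\eta)(Q\phi),
\)
while the same substitution in $\mathcal{Q}^{\mathrm{sel}}$ produces
\[
(\mathcal{Q}^{\mathrm{sel}} m(\cdot)(\phi))(\eta) = \sum_{x,y \in E} m_x(\eta) m_y(\eta)\, V_{m(\eta)}(x,y)\bigl(\phi(y) - \phi(x)\bigr).
\]

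The only delicate point in the first identity is to get rid of the symmetric component $V_\mu^{\mathrm{s}}$. I would handle this by swapping the dummy indices $x \leftrightarrow y$ in the selection sum: the weights $m_x(\eta) m_y(\eta) V^{\mathrm{s}}_{m(\eta)}(x,y)$ are invariant under this swap (using symmetry of $V^{\mathrm{s}}$), whereas $\phi(y) - \phi(x)$ changes sign. Hence the $V^{\mathrm{s}}$-contribution equals its own opposite and vanishes, so $V_{m(\eta)}$ may be replaced by $\widetilde V_{m(\eta)} = V_{m(\eta)} - V^{\mathrm{s}}_{m(\eta)}$. Collecting the mutation and selection terms and recognizing the resulting sum as $m(\eta)(\widetilde Q_{m(\eta)}\phi)$ proves the first identity. (The absolute convergence of the double sums under either Assumption \ref{assump:gral_slection rate} or \ref{assump:additive+symmetric} justifies the rearrangements via the boundedness conditions on $V$ and $Q$.)

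For the carré-du-champ identity, I would plug $\psi = m(\cdot)(\phi)$ into the expression for $\Gamma_{\mathcal{Q}}$ already displayed in Section \ref{sec:martingale}. Squaring the increment gives $[\psi(\eta - \mathbf{e}_x + \mathbf{e}_y) - \psi(\eta)]^2 = N^{-2}(\phi(y) - \phi(x))^2$, so
\[
\Gamma_{\mathcal{Q}}(m(\cdot)(\phi))(\eta) = \frac{1}{N}\sum_{x \in E} m_x(\eta)\sum_{y \in E}\Bigl(Q_{x,y} + m_y(\eta) V_{m(\eta)}(x,y)\Bigr)(\phi(y) - \phi(x))^2.
\]
Here the symmetric part does \emph{not} drop out because the squared increment is already symmetric. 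To finish, I would invoke the standard carré-du-champ formula for a pure-jump generator $L$ with rates $L_{x,y}$, namely $\Gamma_L(\phi)(x) = \sum_{y} L_{x,y}(\phi(y) - \phi(x))^2$, applied to $L = Q_{m(\eta)}$ (whose jump rates are exactly $Q_{x,y} + m_y(\eta) V_{m(\eta)}(x,y)$). This recognizes the inner sum as $\Gamma_{Q_{m(\eta)}}(\phi)(x)$ and yields the claimed factor $1/N$.

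The proof is essentially a direct calculation; no obstacle of substance arises. The only place requiring care is the symmetry argument dropping $V^{\mathrm{s}}$ from the generator identity while keeping it in the carré-du-champ identity, and the use of Fubini on the double sums, which is legitimate thanks to \eqref{eq:bound_condition} and the non-explosion of $Q$.
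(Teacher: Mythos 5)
Your proposal is correct and follows essentially the same route as the paper: the first identity is the computation of $\mathcal{Q}^{\mathrm{mut}}m_x$ and $\mathcal{Q}^{\mathrm{sel}}m_x$ from \eqref{eq:gen_mut}--\eqref{eq:gen_sel} combined with the index-swap cancellation $\mu(Q_\mu\phi)=\mu(\widetilde{Q}_\mu\phi)$, and the second is the direct substitution of the squared increment $N^{-2}(\phi(y)-\phi(x))^2$ into $\Gamma_{\mathcal{Q}}$ and recognition of the inner sum as $\Gamma_{Q_{m(\eta)}}(\phi)(x)$. Your explicit remarks on why $V^{\mathrm{s}}$ cancels in the first identity but not in the second, and on the absolute convergence justifying the rearrangements, are consistent with (and slightly more detailed than) the paper's argument.
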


The proof of Lemma \ref{lemma:1} is deferred to Appendix \ref{app:proof_lemma:1}.

Using this result, we can study the martingale problem associated to the process $\left( m(\eta_t) (\psi_t) \right)_{t \ge 0}$. 

\begin{proposition}[Martingale decomposition] \label{prop:martingale}
	Let $\psi$ be a function on $E \times \mathbb{R}_+$ such that $\psi_\cdot(x)$ is continuously differentiable in $\mathbb{R}_+$, for every $x \in E$ and $\psi_t(\cdot) \in \mathcal{B}_b(E)$, for every $t \in \mathbb{R}_+$.
	Then, the process $\big( \mathcal{M}_t(\psi_\cdot) \big)_{t \ge 0}$ such that
	\[
	\mathcal{M}_t(\psi_\cdot)  := m(\eta_{t})(\psi_{t}) - m(\eta_0)(\psi_0) - \int_0^t m(\eta_{s}) \left(\partial_s \psi_s + \widetilde{Q}_{m(\eta_{s})} (\psi_{s})  \right) \mathrm{d}s,
	\]
	where $\widetilde{Q}_{\mu}$ is defined as in Lemma \ref{lemma:1},
	is a local martingale, with \emph{predictable quadratic variation} given by
	\begin{align*}
		\langle \mathcal{M}(\psi_\cdot) \rangle_t &= \frac{1}{N} \int_0^t m(\eta_s) \Big( \Gamma_{Q_{m(\eta_s)}}(\psi_s) \Big) \mathrm{d}s. 
	\end{align*}
	Moreover,
	\[
	|\Delta \mathcal{M}_t(\psi_t)| \le \frac{2 \|\psi_t\|}{N}.
	\]
\end{proposition}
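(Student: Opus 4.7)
The plan is to apply the time-dependent version of Dynkin's formula to the function $F : (\eta, s) \mapsto m(\eta)(\psi_s)$, and then identify the resulting drift and bracket using Lemma~\ref{lemma:1}. Set $F_s(\eta) := m(\eta)(\psi_s)$. Since $s \mapsto \psi_s(x)$ is $C^1$ and $m(\eta)$ is supported on at most $N$ points, $s \mapsto F_s(\eta)$ is absolutely continuous with derivative $\partial_s F_s(\eta) = m(\eta)(\partial_s \psi_s)$. The standard extended Dynkin formula for pure-jump Markov processes then states that
\[
F_t(\eta_t) - F_0(\eta_0) - \int_0^t \bigl[\partial_s F_s(\eta_s) + (\mathcal{Q} F_s)(\eta_s)\bigr] \, \mathrm{d}s
\]
is a local martingale. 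Invoking the first identity of Lemma~\ref{lemma:1} to rewrite $(\mathcal{Q} F_s)(\eta_s) = m(\eta_s)\bigl( \widetilde{Q}_{m(\eta_s)}(\psi_s)\bigr)$ shows this is exactly $\mathcal{M}_t(\psi_\cdot)$.

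For the predictable quadratic variation, I would use the classical fact that, for a pure-jump Markov process with generator $\mathcal{L}$ and a sufficiently regular (time-dependent) test function $F_s$, the bracket of the associated Dynkin martingale is $\int_0^t \Gamma_{\mathcal{L}}(F_s)(\eta_s)\,\mathrm{d}s$: the time component of the space–time process is continuous and deterministic, so it contributes nothing to the bracket. Substituting $F_s(\eta) = m(\eta)(\psi_s)$ and applying the second identity of Lemma~\ref{lemma:1} immediately yields
\[
\langle \mathcal{M}(\psi_\cdot) \rangle_t = \frac{1}{N}\int_0^t m(\eta_s)\bigl( \Gamma_{Q_{m(\eta_s)}}(\psi_s)\bigr)\,\mathrm{d}s.
\]

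The jump bound is direct: at any jump time $t$ the configuration changes by $\eta_{t-} \mapsto \eta_{t-} - \mathbf{e}_x + \mathbf{e}_y$ for some $x,y \in E$, hence
\[
\Delta \mathcal{M}_t(\psi_\cdot) = m(\eta_t)(\psi_t) - m(\eta_{t-})(\psi_t) = \frac{\psi_t(y) - \psi_t(x)}{N},
\]
whose absolute value is bounded by $2\|\psi_t\|/N$.

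The main technical point, which is routine but requires a little care, is to check that all the sums defining $\mathcal{Q} F_s$ and $\Gamma_{\mathcal{Q}} F_s$ converge absolutely so that the Dynkin formula applies. The selection part is unproblematic because $m(\eta)$ has support of cardinality at most $N$ and $\|V\| < \infty$ by \eqref{eq:bound_condition}. For the mutation part, the inequality $\sum_{y \in E} Q_{x,y}[\psi_s(y) - \psi_s(x)]^2 \le 4\|\psi_s\|^2(-Q_{x,x})$ together with $-Q_{x,x} < \infty$ (a consequence of non-explosion of $Q$) suffices. If needed, the localisation is handled by the sequence of jump times of $(\eta_t)_{t\ge 0}$, which tend to infinity almost surely by non-explosion of the Moran process.
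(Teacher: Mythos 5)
Your proposal is correct and follows essentially the same route as the paper: the paper first invokes the martingale problem for time-independent test functions and then the It\^{o} formula to absorb the time dependence (which is exactly the extended Dynkin formula you use), identifies the drift and the bracket via the two identities of Lemma~\ref{lemma:1} and the carr\'e-du-champ, and bounds the jump by noting that each transition moves a single particle. Your added checks (absolute convergence of the sums, the explicit jump computation, localisation by the jump times) merely make explicit details the paper leaves implicit.
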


The proof of Proposition \ref{prop:martingale} is deferred to Appendix \ref{app:proof_prop:martingale}.

Now, for a function $\psi$ on $E \times \mathbb{R}_+$, continuously differentiable in $\mathbb{R}_+,$ we get
\begin{equation*}
	\mathrm{d} m(\eta_t) (\psi_t) = \mathrm{d} \mathcal{M}_t(\psi_\cdot) + m(\eta_t) \Big( \partial_t \psi_t + \widetilde{Q}_{m(\eta_t)} (\psi_t) \Big) \mathrm{d} t.
\end{equation*}
Thus, the empirical measure induced by the particle process is a perturbation of the dynamic given by \eqref{eq:preODE}, by a martingale whose jumps and predictable quadratic variation are of order ${1}/{N}$.

\subsection{Proof of Theorem \ref{thm:propagation_chaos}} \label{sec:proofThm1}

Throughout this section, we will suppose that the expression for the selection rates in Assumption \ref{assump:gral_slection rate} is verified.
We will denote $\widetilde{Q}_\mu = Q + \widetilde{\Pi}_\mu$ as in Lemma \ref{lemma:1}, namely
\[
\widetilde{\Pi}_{\mu} \phi: x \mapsto \sum_{y \in E} \mu(y) V(x,y) [\phi(y)-\phi(x)],
\]
where 
\(
V = V_\mu - V_\mu^\mathrm{s},
\)
which is independent of $\mu \in \mathcal{M}_1(E)$.

The family of generators $\big(\widetilde{Q}_{\mu_t}\big)_{t\ge 0}$ defines an inhomogeneous-time Markov chain, which is associated to a map $(s,t) \mapsto P(s,t)$, for all $s \le t$ such that $P(s,s) = I$, for all $s \ge 0$ and satisfies the forward and backward Kolmogorov equations:
\begin{align}
	\partial_t P(s,t) &= P(s,t) \widetilde{Q}_{\mu_t}, \text{ for } t \ge s, \label{eq:forward_Kolmogorov}\\
	\partial_s P(s,t) &= - \widetilde{Q}_{\mu_s} P(s,t), \text{ for } s \le t. \nonumber 
\end{align}
See \cite{2014NonHomogeneous} and the references therein.
Besides, using the forward Kolmogorov equation \eqref{eq:forward_Kolmogorov}, we get that $(\mu_t)_{t \ge 0}$ as in Lemma \ref{assump:gral_slection rate_existence}, satisfies the propagation equation 
\( 
\mu_T = \mu_t P(t,T). 
\)
Note that since $P(t,T)$ is the propagator of an inhomogeneous Markov chain, we get $\|P(t,T)\| \le 1$ for all $t \in [0,T]$, which implies
\begin{equation}\label{eq:bound_integral_WtT}
	\int_0^T \|P(s,T)(\phi)\|^{p} \mathrm{d}s \le  T.  
\end{equation}

Let us now study the control in $\mathbb{L}^p$ norm for the martingales that are obtained taking the functions $t \in [0,T] \mapsto P(\cdot, T)\big(\phi\big)$ and $t \in [0,T] \mapsto P(\cdot, T)\big(\phi\big)^2$ in Proposition \ref{prop:martingale}.	
Note that,
\[
\partial_t \Big( P(t,T)\big(\phi\big)\Big) = -  \widetilde{Q}_{\mu_t} \Big( P(t,T)\big(\phi\big) \Big).
\]
From Proposition \ref{prop:martingale}, we get the following local martingale for $t \in [0,T]$:
\begin{align*}
	\mathcal{M}_t \Big( P(\cdot, T) \big(\phi\big) \Big) :=& 
	\; m(\eta_t)\Big( P(t,T)\big(\phi\big) \Big) - m(\eta_0)\Big( P(0,T)\big(\phi\big) \Big) \\
	&- \int_0^t m(\eta_s) \Big(  \widetilde{Q}_{m(\eta_s)} \Big(P(s,T)\big(\phi\big) \Big) - \widetilde{Q}_{\mu_s} \Big(P(s,T)\big(\phi\big) \Big) \Big) \mathrm{d}s.
\end{align*}
Similarly, we get
\begin{align}
	\mathcal{M}_t \left( P(\cdot, T)\big(\phi\big)^2 \right) :=& \, 
	m(\eta_t)\left( P(t,T)\big(\phi\big)^2 \right) - m(\eta_0)\left( P(0,T)\big(\phi\big) \right) \nonumber \\
	&- \int_0^t m(\eta_s)  \left(  \widetilde{Q}_{m(\eta_s)} \Big(P(s,T)\big(\phi\big)^2 \Big) - 2 P(s,T) (\phi) \; \widetilde{Q}_{\mu_s} \Big( P(s,T)\big(\phi\big) \Big) \right)  \mathrm{d}s. \label{eq:martingalePtT2}
\end{align}
Moreover, by definition, $P(T,T)\big(\phi\big) = \phi$.

\begin{lemma}[Control of the predictable quadratic variation]\label{lemma:bound_quadratic_variation:additive}
	Assume that Assumption \ref{assump:gral_slection rate} is verified.
	For every test function $\phi \in \mathcal{B}_1(E)$, we have
	\begin{align*}
		N \Big\langle \mathcal{M}\left(P(\cdot, T)\big( \phi \big) \right) \Big\rangle_t &\le C ( t + 1) - \mathcal{M}_t \Big(P(\cdot, T)\big( \phi \big)^{2}\Big), \;\; \text{ for all } \;\;  t \in [0,T].
	\end{align*}
\end{lemma}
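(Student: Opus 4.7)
The plan is to combine the martingale expression from Proposition~\ref{prop:martingale} with the predictable quadratic variation formula and study the sum
\[
\mathcal{A}_t := N\langle \mathcal{M}(P(\cdot,T)(\phi))\rangle_t + \mathcal{M}_t(P(\cdot,T)(\phi)^2),
\]
showing directly that $\mathcal{A}_t \le C(t+1)$. Writing $\psi_s := P(s,T)(\phi)$, one has $\|\psi_s\| \le \|\phi\| \le 1$ (since $P(s,T)$ is an $L^\infty$-contraction) and $\partial_s\psi_s = -\widetilde{Q}_{\mu_s}\psi_s$ by the backward Kolmogorov equation. Applying Proposition~\ref{prop:martingale} yields the identity~\eqref{eq:martingalePtT2} for $\mathcal{M}_t(\psi_\cdot^2)$, while
\[
N\langle \mathcal{M}(\psi_\cdot)\rangle_t = \int_0^t m(\eta_s)\bigl(\Gamma_{Q_{m(\eta_s)}}(\psi_s)\bigr)\,\mathrm{d}s.
\]
Observe the crucial asymmetry: the carr\'e-du-champ involves the full generator $Q_\mu$ (with the total selection rate $V_\mu$), whereas $\mathcal{M}_t(\psi_\cdot^2)$ is built from $\widetilde{Q}_\mu$ (with only $\widetilde{V}_\mu = V_\mu - V^s_\mu$).

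The second step is an algebraic telescoping. Expanding $\Gamma_{Q_\mu}(\psi) = Q_\mu(\psi^2) - 2\psi Q_\mu(\psi)$ and writing $Q_\mu = \widetilde{Q}_\mu + \Pi^s_\mu$ with $\Pi^s_\mu\phi(x) := \sum_y \mu(y) V^s_\mu(x,y)(\phi(y) - \phi(x))$, the $\widetilde{Q}$-contributions cancel up to mean-field discrepancies, leaving
\[
\mathcal{A}_t = \bigl[m(\eta_t)(\psi_t^2) - m(\eta_0)(\psi_0^2)\bigr] + \int_0^t m(\eta_s)\Bigl(\Gamma_{\Pi^s_{m(\eta_s)}}(\psi_s) - 2\psi_s\bigl(\widetilde{\Pi}_{m(\eta_s)} - \widetilde{\Pi}_{\mu_s}\bigr)(\psi_s)\Bigr)\,\mathrm{d}s.
\]

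Each surviving piece is bounded uniformly in $s$. The boundary term satisfies $m(\eta_t)(\psi_t^2) \le \|\phi\|^2 \le 1$. The carr\'e-du-champ piece admits the pointwise representation
\(
\Gamma_{\Pi^s_{m(\eta_s)}}(\psi_s)(x) = \sum_y m(\eta_s)(y) V^s_{m(\eta_s)}(x,y)(\psi_s(y) - \psi_s(x))^2,
\)
which is nonnegative and bounded by $4\|V^s\|$. Under Assumption~\ref{assump:gral_slection rate} the kernel $\widetilde{V} = V_\mu - V^s_\mu = \sum_i V^d_i \otimes V^b_i$ is independent of $\mu$ and belongs to $\mathcal{B}_b(E\times E)$, so a crude estimate gives
\[
\bigl|\bigl(\widetilde{\Pi}_{m(\eta_s)} - \widetilde{\Pi}_{\mu_s}\bigr)(\psi_s)(x)\bigr| \le 2\|\widetilde{V}\|\,\|\psi_s\|\sum_{y \in E} |m(\eta_s)(y) - \mu_s(y)| \le 4\|\widetilde{V}\|,
\]
so the second integrand term is bounded in absolute value by $8\|\widetilde{V}\|$. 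Combining everything gives $\mathcal{A}_t \le 1 + (4\|V^s\| + 8\|\widetilde{V}\|)\,t \le C(1+t)$, which is the claim.

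The main conceptual obstacle is identifying the right telescoping in the second step. The mismatch between $Q$ and $\widetilde{Q}$ within $\Gamma_{Q_{m(\eta_s)}}$ versus the drift of $\mathcal{M}_t(\psi_\cdot^2)$ deposits a positive carr\'e-du-champ term involving only the bounded symmetric component $V^s$, while the mismatch between $m(\eta_s)$ and $\mu_s$ in the nonlinear drift leaves a mean-field remainder controlled by the uniform bound on $\widetilde{V}$. Once this algebra is set up, the routine bounds reduce to the uniform boundedness hypotheses \eqref{eq:bound_condition} and~\eqref{eq:boundednes_condition} built into Assumption~\ref{assump:gral_slection rate}.
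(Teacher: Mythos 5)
Your proposal is correct and follows essentially the same route as the paper: both proofs add $\mathcal{M}_t\big(P(\cdot,T)(\phi)^2\big)$ to $N\langle\mathcal{M}(P(\cdot,T)(\phi))\rangle_t$ via the identity \eqref{eq:martingalePtT2}, observe that the mutation generator $Q$ cancels so that only boundary terms and an integral involving the bounded selection kernels survive, and bound those uniformly using $\|P(s,T)(\phi)\|\le 1$ and the boundedness in Assumption \ref{assump:gral_slection rate}. Your explicit split of the remainder into the nonnegative term $\Gamma_{\Pi^{\mathrm{s}}_{m(\eta_s)}}(\psi_s)$ plus the mean-field discrepancy $\widetilde{\Pi}_{m(\eta_s)}-\widetilde{\Pi}_{\mu_s}$ is the same quantity the paper handles via the identity $\mu(Q_\mu\phi)=\mu(\widetilde{Q}_\mu\phi)$, just written out more explicitly.
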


The proof of Lemma \ref{lemma:bound_quadratic_variation:additive} is postponed to Appendix \ref{app:proof_lemma:bound_quadratic_variation:additive}.

The following lemma is a generalisation of the classical Burkholder\,--\,Davis\,--\,Gundy (BDG) inequality \cite[Thm.\ 20.12]{Kallenberg2021}.
The lower bound is obtained from the classical BDG inequality.
The proof of the upper bound can be found in \cite[Lemma 6.2]{MR2262944}.

\begin{lemma}[BDG inequalities]\label{lemma:BDGineq} 
	Let $\left(\mathcal{M}_t\right)_{t \ge 0}$ be a quasi-left-continuous (i.e.\ with continuous predictable increasing process) locally square-integrable martingale with $M_0 = 0$ and bounded jumps $$\sup\limits_{0 \le t \le T} |\Delta \mathcal{M}_t| \le a < + \infty.$$
	Then, there exists a constant $C$, possibly depending on $q$, such that
	\[
	\mathbb{E}\left[ \left(\sup_{t \in [0,T]} \mathcal{M}_t \right)^{2^{q+1}} \right] \le C \mathbb{E} \left[ \Big( [\mathcal{M}]_T \Big)^{2^q} \right] \le C \sum_{k = 0}^q a^{2^{q+1} - 2^{k+1}} \mathbb{E} \left[ \Big( \langle \mathcal{M} \rangle_T \Big)^{2^k} \right].
	\]
\end{lemma}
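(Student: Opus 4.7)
The first inequality is the classical Burkholder\,--\,Davis\,--\,Gundy (BDG) inequality applied with exponent $p = 2^{q+1}$, so nothing further is required there. The nontrivial content is the second inequality, which compares the optional quadratic variation $[\mathcal{M}]_T$ to the predictable one $\langle \mathcal{M} \rangle_T$. My plan is to establish it by induction on $q$, using the fact that $N_t := [\mathcal{M}]_t - \langle \mathcal{M} \rangle_t$ is itself a local martingale to which BDG can be re-applied.

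The key observation is that $\langle \mathcal{M} \rangle$ is by construction the compensator of $[\mathcal{M}]$, so $N$ is a well-defined local martingale. The quasi-left-continuity hypothesis forces $\langle \mathcal{M} \rangle$ to be continuous, whence $\Delta N_t = (\Delta \mathcal{M}_t)^2$ and $|\Delta N_t| \le a^2$. The base case $q = 0$ reduces to $\mathbb{E}[[\mathcal{M}]_T] = \mathbb{E}[\langle \mathcal{M} \rangle_T]$, which matches the claimed bound since the only nonzero exponent of $a$ is $a^{2-2} = 1$.

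For the induction step, the convexity inequality $(x+y)^{2^q} \le 2^{2^q-1}(x^{2^q} + y^{2^q})$ together with the decomposition $[\mathcal{M}]_T = \langle \mathcal{M} \rangle_T + N_T$ yields
\[
\mathbb{E}\bigl[[\mathcal{M}]_T^{2^q}\bigr] \le C\Bigl(\mathbb{E}[N_T^{2^q}] + \mathbb{E}\bigl[\langle \mathcal{M} \rangle_T^{2^q}\bigr]\Bigr).
\]
Applying the already-proved first half of the lemma to $N$ gives $\mathbb{E}[N_T^{2^q}] \le C\,\mathbb{E}[[N]_T^{2^{q-1}}]$, and the jump bound for $N$ translates into
\[
[N]_T = \sum_{0 < s \le T}(\Delta \mathcal{M}_s)^4 \le a^2\,[\mathcal{M}]_T,
\]
hence $\mathbb{E}[N_T^{2^q}] \le C\,a^{2^q}\,\mathbb{E}\bigl[[\mathcal{M}]_T^{2^{q-1}}\bigr]$. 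Injecting the induction hypothesis and collecting powers of $a$ reproduces exactly the sum $\sum_{k=0}^q a^{2^{q+1} - 2^{k+1}}\,\mathbb{E}[\langle \mathcal{M} \rangle_T^{2^k}]$: the $k = q$ term comes from the $\langle \mathcal{M} \rangle_T^{2^q}$ summand, and the terms with $k < q$ from the inductive estimate on $\mathbb{E}[[\mathcal{M}]_T^{2^{q-1}}]$.

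The main obstacle I expect is the integrability bookkeeping: a priori $N$ is only a local martingale and its $2^q$-th moment need not be finite. This is handled by a standard localisation at $\tau_n := \inf\{t : [\mathcal{M}]_t \ge n\}$, proving the inequality for the stopped processes and passing to the limit by monotone convergence. The essential structural ingredient throughout is the quasi-left-continuity of $\mathcal{M}$: without it, one would only have $|\Delta N_t| \le a^2 + |\Delta \langle \mathcal{M} \rangle_t|$ with no pointwise control on the second term, and the iteration of BDG on $N$ would break down.
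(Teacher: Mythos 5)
Your proposal is correct and follows essentially the same route as the paper's source for this lemma: the paper does not prove it in-house but cites Rousset's Lemma~6.2, whose argument is exactly your iterated-BDG induction on the finite-variation local martingale $N = [\mathcal{M}] - \langle \mathcal{M} \rangle$, using quasi-left-continuity to get $\Delta N = (\Delta \mathcal{M})^2$ and hence $[N]_T \le a^2 [\mathcal{M}]_T$. The exponent bookkeeping ($a^{2^q}$ times the step-$(q-1)$ bound giving the powers $a^{2^{q+1}-2^{k+1}}$) and the localisation at $\tau_n = \inf\{t : [\mathcal{M}]_t \ge n\}$ both check out.
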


We are now in position to establish a control of the quadratic variation of the martingale $\left(\mathcal{M}_t\Big( P(\cdot, T)\big( \phi \big) \Big) \right)_{t \in [0,T]}$.
\begin{lemma}[Control of the quadratic variation]\label{thm:control_quadratic_variation:additive}
	Assume that Assumption \ref{assump:gral_slection rate} is verified.
	For all $p > 0$ and all test function $\phi \in \mathcal{B}_1(E)$, there exists a positive $C_p$ (possibly depending on $p$) such that
	\[
	\mathbb{E} \left[ \left( \left[ \mathcal{M}\Big( P(\cdot, T)\big( \phi \big) \Big) \right]_t \right)^p \right] \le \frac{C_p (t + 1)^p}{N^p}, \;\; \text{ for all } \;\;  t \in [0,T]
	\]
\end{lemma}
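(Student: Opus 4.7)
The plan is to prove the bound first for dyadic values $p = 2^q$, $q \ge 0$, and then extend to arbitrary $p > 0$ by Lyapunov's inequality; the key mechanism is a two-step reduction through the Burkholder\,--\,Davis\,--\,Gundy inequalities of Lemma \ref{lemma:BDGineq}.

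I first control the moments of the predictable quadratic variation $\langle M\rangle_t$, where $M_t := \mathcal{M}_t(P(\cdot,T)(\phi))$, by exploiting the pathwise bound of Lemma \ref{lemma:bound_quadratic_variation:additive}. Writing $N_t^{\phi} := \mathcal{M}_t(P(\cdot,T)(\phi)^2)$, one has
\[
0 \le N \langle M \rangle_t \le C(t+1) - N_t^\phi.
\]
Raising to the power $2^q$ and using convexity yields
\[
N^{2^q}\,\mathbb{E}\bigl[\langle M \rangle_t^{2^q}\bigr] \le C_q\bigl((t+1)^{2^q} + \mathbb{E}[|N_t^\phi|^{2^q}]\bigr).
\]
Since $\|P(s,T)(\phi)^2\| \le 1$, Proposition \ref{prop:martingale} furnishes both a jump bound $2/N$ and an almost-sure estimate $\langle N^\phi\rangle_t \le Ct/N$ on the auxiliary martingale (the boundedness of the carr\'e du champ following from \eqref{eq:bound_condition} and the boundedness of $Q$). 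Applying Lemma \ref{lemma:BDGineq} to $N_t^\phi$ (with parameter $q' = q-1$ for $q \ge 1$, and Cauchy\,--\,Schwarz for $q=0$) then yields $\mathbb{E}[|N_t^\phi|^{2^q}] \le C_q(t+1)^{2^q}$, the dominant contribution being of order $(t/N)^{2^{q-1}}$. Combining these two estimates gives
\[
\mathbb{E}\bigl[\langle M \rangle_t^{2^q}\bigr] \le \frac{C_q(t+1)^{2^q}}{N^{2^q}}.
\]

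I then apply Lemma \ref{lemma:BDGineq} a second time, this time to $M$ itself (whose jumps are also bounded by $2/N$), which gives
\[
\mathbb{E}\bigl[[M]_t^{2^q}\bigr] \le C\sum_{k=0}^q \Bigl(\tfrac{2}{N}\Bigr)^{2^{q+1}-2^{k+1}} \mathbb{E}\bigl[\langle M\rangle_t^{2^k}\bigr] \le C'\sum_{k=0}^q \frac{C_k(t+1)^{2^k}}{N^{2^{q+1}-2^k}}.
\]
The dominant term (smallest $N$-power, corresponding to $k = q$) equals $(t+1)^{2^q}/N^{2^q}$; each other term is bounded by it since $(t+1)^{2^k} \le (t+1)^{2^q}$ for $t \ge 0$ and the exponent $2^{q+1}-2^k$ of $N$ grows as $k$ decreases. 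This yields the desired bound for $p = 2^q$, and one step of Lyapunov's inequality extends it to every $p > 0$.

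The main technical obstacle is the bookkeeping in the BDG expansion: one must simultaneously track two martingales (one associated to $P(\cdot,T)(\phi)$, the other to its square) and verify that the moments of the auxiliary martingale $N_t^\phi$, when fed back into the bound on $\langle M\rangle_t^{2^q}$, do not degrade the $N^{-2^q}$ scaling. The uniform contraction $\|P(s,T)(\phi)\| \le \|\phi\| \le 1$ of the inhomogeneous propagator, noted in \eqref{eq:bound_integral_WtT}, is essential at every step.
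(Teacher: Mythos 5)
Your overall architecture (dyadic powers $p=2^q$, Lemma \ref{lemma:bound_quadratic_variation:additive} to trade $\langle \mathcal{M}(P(\cdot,T)(\phi))\rangle$ for the auxiliary martingale $N^\phi := \mathcal{M}(P(\cdot,T)(\phi)^2)$, then a second application of Lemma \ref{lemma:BDGineq} to pass from $\langle \cdot\rangle$ to $[\cdot]$, then Jensen for general $p$) matches the paper's. The gap is in how you control the auxiliary martingale. You assert that Proposition \ref{prop:martingale} furnishes the \emph{almost sure} estimate $\langle N^\phi\rangle_t \le Ct/N$, justified by ``the boundedness of the carr\'e du champ following from \eqref{eq:bound_condition} and the boundedness of $Q$''. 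But $Q$ is not assumed bounded anywhere in the paper: it is only the generator of a non-explosive, irreducible chain on a countable set, and the birth--death chains of Example \ref{example:B-D_chain}, for instance, may have unbounded rates. Since
\[
\Gamma_{Q_\mu}(\psi)(x) \;\ge\; \sum_{y\neq x} Q_{x,y}\,\big(\psi(y)-\psi(x)\big)^2,
\]
the carr\'e du champ of $\psi_s = P(s,T)(\phi)^2$ is in general unbounded, so $\langle N^\phi\rangle_t$ admits no deterministic $O(t/N)$ bound. (If $Q$ were bounded, the whole lemma would be immediate: one could bound $\langle \mathcal{M}(P(\cdot,T)(\phi))\rangle_t \le Ct/N$ almost surely and dispense with Lemma \ref{lemma:bound_quadratic_variation:additive} entirely; the two-martingale detour exists precisely because the unbounded part of $Q$ only cancels inside the combination $\widetilde{Q}_{m(\eta_s)}(\psi^2) - 2\psi\, Q_{m(\eta_s)}\psi$ after substituting the martingale decomposition \eqref{eq:martingalePtT2}.)

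The paper closes this hole with an induction on $q$ that carries \emph{both} moment bounds (for $\langle\cdot\rangle$ and for $[\cdot]$) simultaneously and for all test functions of sup-norm at most one: to bound $\mathbb{E}\big[(N\langle \mathcal{M}(P(\cdot,T)(\phi))\rangle_t)^{2^{q+1}}\big]$ one applies BDG to $N^\phi$ and then invokes the level-$2^q$ bound on $\mathbb{E}\big[\big([\mathcal{M}(P(\cdot,T)(\phi)^2)]_t\big)^{2^q}\big]$ supplied by the induction hypothesis (admissible because $\|P(\cdot,T)(\phi)^2\|\le 1$), which yields the factor $\sqrt{(t+1)/N}$ and hence $\mathbb{E}[(N\langle\cdot\rangle_t)^{2^{q+1}}]\le C(t+1)^{2^{q+1}}$. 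Your final step (the second BDG application and the bookkeeping of the sum over $k$, with the $k=q$ term dominating) is correct and identical to the paper's; to repair the argument you only need to replace the claimed almost sure bound on $\langle N^\phi\rangle$ by this inductive moment bound.
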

The proof of this result is inspired by the proof of Theorem 5.4 in \cite{MR2262944}, and it is deferred to Appendix \ref{app:proof_thm:control_quadratic_variation:additive}.

\begin{proof}[Proof of Theorem \ref{thm:propagation_chaos}]
	
	Let us denote $\psi_{s,T} := P(s,T)(\bar{\phi}_T)$, which satisfies the backward Kolmogorov equation \eqref{eq:forward_Kolmogorov}.
	We have that $\big(\mathcal{M}_t\big(\psi_{\cdot,T}\big)\big)_{t \in [0,T]}$, defined as in Proposition \ref{prop:martingale}, is a local martingale. 
	Moreover,
	\begin{align}
		\mathcal{M}_{T}(\psi_{\cdot,T}) &= m(\eta_{T})(\psi_{T,T}) - m(\eta_0)(\psi_{0,T}) - \int_0^T m(\eta_{s}) \left(- \widetilde{Q}_{\mu_s} (\psi_{s,T}) + \widetilde{Q}_{m(\eta_{s})} (\psi_{s,T})  \right) \mathrm{d}s \nonumber \\
		&= m(\eta_{T})(\phi) - \mu_T(\phi) - m(\eta_0)(\psi_{0,T})  - \int_0^T m(\eta_{s}) \left(\widetilde{\Pi}_{m(\eta_{s})} (\psi_{s,T}) - \widetilde{\Pi}_{\mu_s} (\psi_{s,T})  \right) \mathrm{d}s. \label{eq:martingale_decomposition_proof}
	\end{align}
	
	Note that for any two probability measures $\lambda$ and $\mu$ on $E$ and for every function $\psi$ on $E$ we have
	\begin{equation}\label{eq:transposingPi}
		\lambda \big( \widetilde{\Pi}_{\mu} (\psi) \big) = -\mu \big( \widehat{\Pi}_{\lambda} (\psi) \big),
	\end{equation}
	where $\widehat{\Pi}_{\lambda}$ acts on a test function $\psi$ as follows:
	\[
	\widehat{\Pi}_{\lambda} (\psi): x \mapsto \sum_{y \in E} \lambda(y) V(y,x) [\psi(y) - \psi(x)].
	\]
	Now, using \eqref{eq:martingale_decomposition_proof} and \eqref{eq:transposingPi} we get
	\begin{equation} \label{eq:martingale_general_proof}
		\mathcal{M}_{T}(\psi_{\cdot,T}) = m(\eta_{T})(\phi) - \mu_T(\phi) - m(\eta_0)(\psi_{0,T})  + \int_0^T \big(m(\eta_{s}) - \mu_s\big) \left(\widehat{\Pi}_{m(\eta_{s})} (\psi_{s,T})  \right) \mathrm{d}s,
	\end{equation}
	where $\psi_{s,T} = P(s,T)(\bar{\phi}_T)$.
	
	Hence, using \eqref{eq:martingale_general_proof}, we can ensure the existence of a positive constant $C_p > 0$, depending on $p$, such that
	\begin{align*}
		\sup_{t \le T} \left| m(\eta_t)(\phi) - \mu_t(\phi) \right|^p &\le C_p \Big( | m(\eta_0)( \psi_{0,T}) - \mu_0(\psi_{0,T}) |^p  +  \sup_{t \le T} |\mathcal{M}_t( \psi_{\cdot,T} )|^p  + R_{p}(T)  \Big),
	\end{align*}
	where
	\[
	R_p(T) =  \int_0^T \left| \big(m(\eta_{s}) - \mu_s \big) \left(\widehat{\Pi}_{m(\eta_{s})} (\psi_{s,T})  \right) \right|^p \mathrm{d}s.
	\]
	
	The initial error can be controlled using Assumption \ref{assump:initial_condition}.
	Indeed, there exists $C_p^{(1)} > 0$ such that
	\[
	\mathbb{E}\big[ | m(\eta_0)(\psi_{0,T})  - \mu_0 (\psi_{0,T})|^p \big] \le \frac{C_p^{(1)}}{N^{p/2}}.
	\]
	Furthermore, using Lemma \ref{thm:control_quadratic_variation:additive} and BDG inequality, we can ensure the existence of a positive constant $C_p^{(2)}$ such that
	\[
	\mathbb{E}\left[ \sup_{t \le T} \left|\mathcal{M}_t \Big(P(\cdot, t)\big(\bar{\phi}_T \big) \Big) \right|^p \right] \le \frac{C_p^{(2)} (T+1)^{p/2}}{N^{p/2}},
	\]
	for all $p \ge 1$.
	Let us denote by $\lambda_s$, the (random) signed measure
	\[
	\lambda_s := m(\eta_{s}) - \mu_s.
	\]
	We have
	\[
	R_p(T) \le C_p \left(   \int_0^T \left| \lambda_s \left(\widehat{\Pi}_{\mu_s} (\psi_{s,T})  \right) \right|^p \mathrm{d}s  +  \int_0^T \left| \lambda_s \left(\widehat{\Pi}_{ \lambda_s } (\psi_{s,T})  \right) \right|^p \mathrm{d}s  \right).
	\]
	The first term in the last expression can be controlled, since $\widehat{\Pi}_{\mu_s} (\psi_{s,T})$ is not random.
	Indeed,
	\begin{align*}
		I_1(T) &:=  \int_0^T \left| \big(m(\eta_{s}) - \mu_s\big) \left(\widehat{\Pi}_{\mu_s} (\psi_{s,T})  \right) \right|^p  \mathrm{d}s  \le 2^p \|V\|^p \int_0^T \left| \big(m(\eta_{s}) - \mu_s\big) \left( \frac{\widehat{\Pi}_{\mu_s} (\psi_{s,T})}{2\|V\|}  \right) \right|^p \mathrm{d}s.
	\end{align*}
		For the second term, note that
	\begin{align*}
		I_2(T) &:=  \int_0^T \left| \sum_{i \ge 1} \sum_{x,y \in E} \lambda_s (x) \lambda_s (y)  V^\mathrm{d}_i(x)V^\mathrm{b}_i(y) \Big[\psi_{s,T}(y) - \psi_{s,T}(x) \Big] \right|^p \mathrm{d}s  \\
		&= \int_0^T \left| \sum_{i \ge 1}
		\lambda_s(V_i^{\mathrm{d}} - V_i^{\mathrm{b}}) \lambda_s(V_i^{\mathrm{b}} \psi_{s,T}) + \lambda_s(V_i^{\mathrm{b}}) \lambda_s\Big((V_i^{\mathrm{b}} - V_i^{\mathrm{d}}) \psi_{s,T}\Big)
		 \right|^p \mathrm{d}s.
	\end{align*}
	Hence, using the Assumption \ref{assump:gral_slection rate}, we conclude that there exists a positive constant $C_p^{(3)}$, depending on $p$, such that
	\[
		I_2(T) \le C_p^{(3)}  \int_0^T  \left| (m(\eta_s) - \mu_s) \left( \frac{1}{\kappa} \sum_{i \ge 1} |V^\mathrm{d}_i - V^\mathrm{b}_i| \right) \right|^p + \left| (m(\eta_s) - \mu_s) \left( \frac{|\psi_{s,T}|}{\kappa} \sum_{i \ge 1} |V^\mathrm{d}_i - V^\mathrm{b}_i| \right) \right|^p \mathrm{d}s,
	\]
	where $\kappa = \left\| \sum\limits_{i \ge 1} |V^\mathrm{d}_i - V^\mathrm{b}_i| \right\| < \infty$.
	
	Let us define
	\[
	\Phi_p(t) := \sup\limits_{ \phi \in \mathcal{B}_1(E) }  \mathbb{E}\left[ \sup_{s \le t} \left| m(\eta_s)(\phi) - \mu_s(\phi) \right|^p \right].
	\]
	Thus, taking the expectations of $I_1(T)$ and $I_2(T)$, we can ensure the existence of a positive constant $C_p^{(4)}$ such that
	\begin{align*}
		\mathbb{E}[R_p(T)] \le C_p^{(4)} \int_0^T \Phi_p(s) \mathrm{d}s.
	\end{align*}
	
	Hence, there exists $K_{p} > 0$ such that
	\begin{align*}
		\Phi_p(T) &\le \frac{K_{p} \left(1 + (1 + T)^{p/2}\right)}{N^{p/2}} + C_p^{(4)} \int_0^T \Phi_p(s) \mathrm{d}s,
	\end{align*}
	which, using Gr\"{o}nwall inequality, ensures the existence of two constants $\alpha_p$ and $\beta_p$ (possibly depending on $p$) such that
	\[
	\Phi_p(T)^{1/p} \le \alpha_p \frac{\sqrt{1 + T}}{\sqrt{N}} \mathrm{e}^{\beta_p T}.
	\]
\end{proof}

\begin{proof}[Proof of Corollary \ref{cor:convergence_in_normL}]
	Let $(x_n)_{n \ge 1}$ be the enumeration of the elements in $E$, in the definition of the distance $\| \cdot \|_\mathrm{w}$.
	Note that
	\begin{align*}
		\mathbb{E}\left[ \sup_{t \in [0,T]} \big( \|m(\eta_t) - \mu_t \|_\mathrm{w} \big)^p \right]^{1/p} &= \mathbb{E} \left[ \sup_{t \in [0,T]} \left( \sum_{k \ge 1} 2^{-k} |m(\eta_t)(x_k) - \mu_t(x_k) | \right)^p \right]^{1/p} \\
		&\le \sum_{k \ge 1} 2^{-k} \mathbb{E} \left[ \sup_{t \in [0,T]} |m(\eta_t)(x_k) - \mu_t(x_k) |^p \right]^{1/p}\\
		&\le \alpha_p \frac{\sqrt{1 + T}}{\sqrt{N}} \mathrm{e}^{\beta_p T},
	\end{align*}
	where the first inequality  is a consequence of the Minkowski's inequality for infinite sums and the last inequality is a consequence of Theorem \ref{thm:propagation_chaos}.
\end{proof}


\subsection{Proof of Theorem \ref{thm:quadbound}} \label{thm:proof_thm6}


In this section, we will consider that Assumption \ref{assump:additive+symmetric} is verified. Namely, the selection rates can be written as 
\(
V_\mu(x,y) = V_\mu^{\mathrm{d}}(x) + V_\mu^{\mathrm{b}}(y) + V_\mu^\mathrm{s}(x,y), 
\)
where $V_\mu^\mathrm{s}$ is symmetric.
Note that under \ref{assump:additive+symmetric}, equation \eqref{eq:preODE} becomes equivalent to
\begin{equation}\label{eq:EDO_appendix}
\partial_t \gamma_t(\phi) = \gamma_t \big( (Q + \Lambda) \phi - \gamma_t(\Lambda) \phi \big),
\end{equation}
where $\Lambda = V_\mu^{\mathrm{b}} - V_\mu^{\mathrm{d}}$.

Using \eqref{eq:EDO_appendix} we can simplify the expression of the martingale $\big(\mathcal{M}_t(\psi_\cdot)\big)_{t \ge 0}$ in Proposition \ref{prop:martingale} as follows
\begin{equation}
	\mathcal{M}_t(\psi_\cdot) = m(\eta_{t})(\psi_{t}) - m(\eta_0)(\psi_0) - \int_0^t m(\eta_{s}) \Big(\partial_s \psi_s + (Q + \Lambda) (\psi_{s}) - m(\eta_s)(\Lambda) \cdot \psi_s  \Big) \mathrm{d}s, \label{eq:reduction_martingale}
\end{equation}
for every bounded function $\psi$ on $E \times \mathbb{R}$, such that $\psi_\cdot(x)$ is continuously differentiable in $\mathbb{R_+}$, for every $x \in E$; and $\psi_t(\cdot) \in \mathcal{B}_b(E)$, for every $t \in \mathbb{R}$.
The previous expression is essential: for a suitable choice of the function $\psi$, we can control the integral part in the expression of $\mathcal{M}_t(\psi_\cdot)$.

Let us study the martingales obtained from Proposition \ref{prop:martingale} using as argument the function 
\[
    W_{t,T}(\phi) : t \in [0,T] \mapsto \frac{P_{T-t}^\Lambda (\phi)}{ \mu_t \left( P_{T-t}^\Lambda(\pmb{1}) \right)},
\]
i.e. where $W_{t,T}$ defined as in \eqref{eq:defWtT_intro} and $\phi \in \mathcal{B}_b(E)$ and $t \in [0,T]$.
Note that $W_{t,T}$ verifies the propagation equation $\mu_T(\phi) = \mu_t (W_{t,T}(\phi))$.
Besides,
\begin{align*}
	\partial_t \Big(\mu_t \big( P_{T-t}^\Lambda(\pmb{1}) \big) \Big) &= \partial_t \left( \frac{\mu_0 P_{T}^\Lambda (1) }{\mu_0 P_t^\Lambda(\pmb{1})} \right)
	= - \frac{\mu_0 P_T^\Lambda(\pmb{1})}{\mu_0 P_t^\Lambda(\pmb{1})^2} \mu_0 P_t^\Lambda(\Lambda)
	= - \mu_t\big(P_{T-t}^\Lambda(\pmb{1})\big) \mu_t(\Lambda).
\end{align*}
Thus,
\begin{align*}
	\partial_t W_{t,T}(\phi) &= - (Q + \Lambda) W_{t,T}(\phi) - \frac{\partial_t \big( \mu_t(P_{T-t}^\Lambda(\pmb{1})) \big) P_{T-t}^\Lambda(\phi) }{\mu_t P^\Lambda_{T-t}(1)^2} \\
	&= - (Q + \Lambda) W_{t,T}(\phi) + \mu_t(\Lambda) W_{t,T}(\phi).
\end{align*}
Hence, $W_{t,T}(\phi)$ is the solution of the Cauchy problem $\partial_s \psi_s = -\big( (Q + \Lambda ) - \mu_t(\Lambda)  \big) \psi_s$, with condition $\psi_T = \phi$.
Let us denote $\psi_{s,T} := W_{t,T}(\phi)$, for any $\phi \in \mathcal{B}_b(E)$.
Note that,
\[
	\partial_t \left( \psi_{t,T} \right) = - \Big( Q + \Lambda  - \mu_t(\Lambda)  \Big) \psi_{t,T} \; \text{ and } \;
	\partial_t \left( \psi_{t,T}^{2}\right) = -2 \psi_{t,T} \cdot \Big( \big( Q + \Lambda  - \mu_t(\Lambda) \big) \psi_{t,T}  \Big).
\]
We are in position to define the martingales $\big( \mathcal{M}_t(\psi_{\cdot, T}) \big)_{t \in [0,T]}$ and $\big( \mathcal{M}_t(\psi_{\cdot, T}^2) \big)_{t \in [0,T]}$, as stated in Proposition \ref{prop:martingale}, which under Assumption \ref{assump:additive+symmetric} can be written as follows
\begin{align}
	\mathcal{M}_t \left( \psi_{\cdot,T} \right) &=
	m(\eta_t)\left( \psi_{t,T} \right)
	- m(\eta_0)\left( \psi_{0,T} \right) 
	- \int_0^t m(\eta_s) \left( \psi_{s,T} \right) \left[ m(\eta_s) (\Lambda) - \mu_s(\Lambda) \right]\mathrm{d}s, \label{eq:martingale-linear} \\
	\mathcal{M}_t \left( \psi_{\cdot,T}^{2} \right)
	&= \, m(\eta_t)\left( \psi_{t,T}^{2} \right) -  m(\eta_0)\left( \psi_{0,T}^{2} \right) - 2  \int_0^t m(\eta_s) \left( \psi_{s,T}^2 \right) \Big[ \mu_s(\Lambda) - m(\eta_s)(\Lambda)  \Big] \mathrm{d}s - \Psi_t, \label{eq:martingale-carre}
\end{align}
where
\[
\Psi_t := \int_0^t m(\eta_s) \Big( \big(Q + \Lambda - m(\eta_s)(\Lambda) \big) (\psi_{s,T}^{2}) - 2 \psi_{s,T} \cdot \big( Q + \Lambda - m(\eta_s)(\Lambda) \big) (\psi_{s,T}) \Big) \mathrm{d}s.
\]

Furthermore, note that
\begin{equation}\label{eq:simply_calculus}
	\mu\big( \phi \cdot \widetilde{Q}_\mu \phi \big) = \mu\big( \phi (Q + \Lambda - \mu(\Lambda)) \phi \big) + \mu(\phi) \mu(\mathcal{V}_\mu \phi) - \mu(\phi^2 V^\mathrm{b}_\mu) - \mu(\phi^2) \mu( V^\mathrm{d}_\mu),
\end{equation}
where $\mathcal{V}_\mu := V_\mu^\mathrm{d} + V_\mu^\mathrm{b} \in \mathcal{B}_b(E)$, for every $\mu \in \mathcal{M}_1(E)$.

Thus, the predictable quadratic variation of $\big( \mathcal{M}_t(\psi_{\cdot, T}) \big)_{t \in [0,T]}$ satisfies
\begin{align*}
	N \Big\langle \mathcal{M}\Big( \psi_{\cdot,T} \big) \Big\rangle_t &:= \int_0^t m(\eta_s) \Big( \Gamma_{Q_{m(\eta_s)}}(\psi_{s,T}) \Big) \mathrm{d}s \\ 
	&= \int_0^t m(\eta_s) \left( 
	\widetilde{Q}_{m(\eta_s)} \big( \psi_{s,T}^2 \big) - 2 \psi_{s,T} \cdot \widetilde{Q}_{m(\eta_s)} \psi_{s,T} 
	\right) \mathrm{d} s + \int_0^t S_{m(\eta_s)} \left(\psi_{s,T} \right) \mathrm{d}s,
\end{align*}
where $S_\mu$ is defined as in \eqref{eq:defSmu}, for every $\mu \in \mathcal{M}_1(E)$.
Now, using \eqref{eq:simply_calculus} we obtain
\begin{align*}
	N \Big\langle \mathcal{M}\Big( \psi_{\cdot,T} \big) \Big\rangle_t = \Psi_t & - 2 \int_0^t m(\eta_s)(\psi_{s,T}) m(\eta_s)\big(\mathcal{V}_{m(\eta_s)} \psi_{s,T}\big) \mathrm{d}s + 2 \int_0^t  m(\eta_s) \big(\psi_{s,T}^2 V^\mathrm{b}_{m(\eta_s)} \big)\mathrm{d}s \\
	&+ 2 \int_0^t  m(\eta_s)(\psi_{s,T}^2) m(\eta_s)\left( V^\mathrm{d}_{m(\eta_s)} \right)\mathrm{d}s + \int_0^t S_{m(\eta_s)} \left(\psi_{s,T} \right) \mathrm{d}s.
\end{align*}
Then, using \eqref{eq:martingale-carre} we can substitute the value of $\Psi_t$ into this last expression and get
\begin{align}
	N \Big\langle \mathcal{M}\big( \psi_{\cdot,T} \big) \Big\rangle_t = &- \mathcal{M}_t(\psi_{\cdot, T}^2) + m(\eta_t)\left( \psi_{t,T}^{2} \right) -  m(\eta_0)\left( \psi_{0,T}^{2} \right)   + 2\int_0^t  m(\eta_s)(\psi_{s,T}^2) m(\eta_s)\left( V^\mathrm{d}_{m(\eta_s)} \right)\mathrm{d}s \nonumber \\ 
	&+ 2 \int_0^t  m(\eta_s) \left(\psi_{s,T}^2 V^\mathrm{b}_{m(\eta_s)} \right)\mathrm{d}s  +  \int_0^t S_{m(\eta_s)} \left(\psi_{s,T} \right) \mathrm{d}s + R_t, \label{eq:predictable-quad-variation_decomposition}
\end{align}
where
\begin{align}
	R_t := &- 2  \int_0^t m(\eta_s) \left( \psi_{s,T}^2 \right) \Big[ \mu_s(\Lambda) - m(\eta_s)(\Lambda)  \Big] \mathrm{d}s - 2 \int_0^t m(\eta_s)(\psi_{s,T}) m(\eta_s)\big(\mathcal{V}_{m(\eta_s)} \psi_{s,T}\big) \mathrm{d}s. \label{def:R_t}
\end{align}

The key component in the proof of Theorem \ref{thm:quadbound} is a central limit theorem for the martingale $\big(\mathcal{M}_t(\psi_{\cdot, T})\big)_{t \in [0,T]}$.
Let us first introduce an auxiliary result.

Consider the process $\big( \widetilde{\mathcal{M}}_t(W_{\cdot, T}(\bar{\phi}_T)) \big)_{t \in [0,T]}$ defined as
\[
\widetilde{\mathcal{M}}_t(W_{\cdot, T}(\bar{\phi}_T)) := \sqrt{N} m(\eta_0) \big( W_{0,T}(\bar{\phi}_T) \big) + \sqrt{N} \mathcal{M}_t(W_{\cdot, T}(\bar{\phi}_T)),
\]
for $t \in [0,T]$.
Then, $\big( \widetilde{\mathcal{M}}_t(W_{\cdot, T}(\bar{\phi}_T)) \big)_{t \in [0,T]}$ is a martingale, with initial value $$\widetilde{\mathcal{M}}_0(W_{\cdot, T}(\bar{\phi}_T)) = \sqrt{N} m(\eta_0) \big( W_{0,T}(\bar{\phi}_N) \big).$$

\begin{proposition}[Central limit theorem] \label{prop:CLT_martingale}
	The martingale $\big( \widetilde{\mathcal{M}}_t(W_{\cdot, T}(\bar{\phi}_T)) \big)_{t \in [0,T]}$ converges in law when $N \to \infty$ towards a Gaussian martingale whose variance at time $t \in [0,T]$ is $\sigma_t^2(\phi)$, defined as
	\[
	\sigma^2_t(\phi) := \mu_t\big( \psi_{t,T}^2 \big) + 2\int_0^t  \mu_s (\psi_{s,T}^2) \mu_s \left( V^\mathrm{d}_{\mu_s} \right)\mathrm{d}s + 2 \int_0^t  \mu_s \big(\psi_{s,T}^2 V^\mathrm{b}_{\mu_s} \big)\mathrm{d}s  +  \int_0^t S_{\mu_s} \left(\psi_{s,T} \right) \mathrm{d}s,
	\]
	and $\psi_{t,T} = W_{t,T}(\bar{\phi}_T)$.
\end{proposition}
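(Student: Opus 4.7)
The plan is to invoke a functional martingale central limit theorem (for instance the one in Jacod--Shiryaev, Ch.~VIII, or Ethier--Kurtz, Ch.~7) applied to the centered martingale $(\widetilde{\mathcal{M}}_t - \widetilde{\mathcal{M}}_0)_{t \in [0,T]}$, and then to combine the result with the asymptotic Gaussian behaviour of $\widetilde{\mathcal{M}}_0$ provided by Assumption \ref{assump:initial_condition_as_normality}. Throughout I will exploit the crucial propagation identity $\mu_s(\psi_{s,T}) = \mu_s(W_{s,T}(\bar{\phi}_T)) = \mu_T(\bar{\phi}_T) = 0$. The Lindeberg condition is immediate: by Proposition \ref{prop:martingale}, $|\Delta \widetilde{\mathcal{M}}_t| \le 2\|\psi_{t,T}\|/\sqrt{N}$, and $\|\psi_{t,T}\|$ is uniformly bounded on $[0,T]$ (the denominator $\mu_t(P_{T-t}^\Lambda(\pmb{1}))$ being bounded below away from zero as a consequence of Lemma \ref{corollary:exp_ergodicity_nonnormalised}).

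For the predictable quadratic variation, I would work from the decomposition \eqref{eq:predictable-quad-variation_decomposition} of $\langle \widetilde{\mathcal{M}} \rangle_t = N \langle \mathcal{M}(\psi_{\cdot,T}) \rangle_t$. The summand $\mathcal{M}_t(\psi_{\cdot,T}^2)$ has jumps and predictable quadratic variation of order $1/N$, hence vanishes in probability by Doob's inequality. Each remaining term of the form $m(\eta_s)(F_{m(\eta_s)}(\psi_{s,T}))$ can be replaced by its $\mu_s$-analogue thanks to the continuity of $\mu \mapsto V^{\mathrm{d}}_\mu, V^{\mathrm{b}}_\mu, V^{\mathrm{s}}_\mu$ from Assumption \ref{assump:additive+symmetric} and the uniform $\mathbb{L}^p$ estimates of Theorem \ref{thm:uniformLp}, followed by dominated convergence inside the time integrals.

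I expect the main obstacle to be the convergence to zero of the remainder $R_t$ defined in \eqref{def:R_t}. Its first term is controlled by $\|\psi_{\cdot,T}\|^2 \int_0^t |m(\eta_s)(\Lambda) - \mu_s(\Lambda)|\, \mathrm{d}s$, which tends to zero in $\mathbb{L}^2$ via Theorem \ref{thm:uniformLp} and Fubini. For the second term, the cancellation $\mu_s(\psi_{s,T}) = 0$ lets me rewrite $m(\eta_s)(\psi_{s,T}) = (m(\eta_s) - \mu_s)(\psi_{s,T})$, which is $O_{\mathbb{L}^2}(1/\sqrt{N})$ uniformly in $s \in [0,T]$; combined with the uniform boundedness of $m(\eta_s)(\mathcal{V}_{m(\eta_s)} \psi_{s,T})$, this yields the desired vanishing. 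Altogether $\langle \widetilde{\mathcal{M}} \rangle_t$ converges in probability to the deterministic, continuous function $\sigma_t^2(\phi) - \mu_0(\psi_{0,T}^2)$.

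Finally, I would combine the martingale CLT output (the centered process $\widetilde{\mathcal{M}}_\cdot - \widetilde{\mathcal{M}}_0$ converges in law to a continuous Gaussian martingale with variance function $\sigma_t^2(\phi) - \mu_0(\psi_{0,T}^2)$) with the Gaussian limit of $\widetilde{\mathcal{M}}_0 = \sqrt{N}(m(\eta_0)(\psi_{0,T}) - \mu_0(\psi_{0,T}))$, which by Assumption \ref{assump:initial_condition_as_normality} is centered Gaussian of variance $\mu_0(\psi_{0,T}^2) = \sigma_0^2(\phi)$. Because the limiting quadratic variation is deterministic, the convergence of $\widetilde{\mathcal{M}}_\cdot - \widetilde{\mathcal{M}}_0$ can be upgraded to stable convergence with respect to $\mathcal{F}_0$, so that the limits are asymptotically independent and add in variance, producing a Gaussian limit with variance exactly $\sigma_t^2(\phi)$. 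This stable-convergence step, together with the vanishing of $R_t$, are the two subtle points; the remainder of the argument is bookkeeping on the terms in \eqref{eq:predictable-quad-variation_decomposition}.
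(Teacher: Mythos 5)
Your proposal is correct and follows essentially the same route as the paper's proof: a martingale central limit theorem (the paper cites Jacod--Shiryaev, Thm.\ VIII.3.11) reduced to checking (i) the Gaussian limit of the initial value $\widetilde{\mathcal{M}}_0$ via Assumption \ref{assump:initial_condition_as_normality} and (ii) the convergence in probability of $N\langle \mathcal{M}(\psi_{\cdot,T})\rangle$ to $\sigma_\cdot^2(\phi)-\sigma_0^2(\phi)$, obtained from the decomposition \eqref{eq:predictable-quad-variation_decomposition} together with Theorem \ref{thm:uniformLp}, Cauchy--Schwarz and the cancellation $\mu_s(\psi_{s,T})=0$ to kill $R_t$. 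The only difference is that you spell out the Lindeberg condition and the stable-convergence argument for adding the variances, details the paper delegates to the cited theorem.
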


\begin{proof}
	Using Theorem 3.11 in \cite[\S\ 8]{MR959133}, and arguing as in the proofs of Proposition 3.31 in \cite{DelMoralMiclo2000} and Proposition 3.7 in \cite{MR1956078}, we only need to check that the result holds for the initial value
	$\widetilde{\mathcal{M}}_0(W_{\cdot, T}(\bar{\phi}_T)) = \sqrt{N} m(\eta_0) \big( W_{0,T}(\bar{\phi}_T) \big)$ and that $N \langle \mathcal{M}(\psi_{\cdot, T}) \rangle$ converges in probability to a continuous function, when $N$ goes to infinity.
	The first point is in fact Assumption \ref{assump:initial_condition_as_normality}.
	Furthermore, 
	Theorem  \ref{thm:uniformLp} implies, by a Borel\,--\,Cantelli argument, the convergence
	\(
	m(\eta_s) \xrightarrow{ \mathrm{a.s.}} \mu_s,
	\)
	when $N \rightarrow \infty$, for all $s \ge 0$, see Remark \ref{rmk:almostsure_conv}.
	Now, using the Cauchy\,--\,Schwarz inequality and Theorem \ref{thm:uniformLp} (see also equation \eqref{eq:boundI2p}), we easily prove that $R_t$, defined by \eqref{def:R_t}, converges to $0$ in probability and that $N \langle \mathcal{M}(\psi_{\cdot, T}) \rangle$ converges to the continuous function $\sigma_\cdot^2(\phi) - \sigma_0^2(\phi)$ in probability, when $N \to \infty$, which concludes the proof.
\end{proof}	

\begin{proof}[Proof of Theorem \ref{thm:quadbound}]
	
	As a consequence of Proposition \ref{prop:CLT_martingale} and \eqref{eq:martingale-linear} we have that
	\[
	\widetilde{\mathcal{M}}_T\big( W_{\cdot, T}(\bar{\phi}_T) \big) = \sqrt{N} m(\eta_T)(\phi) - \mu_T(\phi) - \sqrt{N} \int_0^t m(\eta_s) \left( \psi_{s,T} \right) \left[ m(\eta_s) (\Lambda) - \mu_s(\Lambda) \right]\mathrm{d}s
	\]
	converges to a Gaussian random variable of variance $\sigma_T^2(\phi)$, when $N \to \infty$.
	Thus, the first part of Theorem \ref{thm:quadbound} comes from the fact that 
	\[
	\sqrt{N} \int_0^t m(\eta_s) \left( \psi_{s,T} \right) \left[ m(\eta_s) (\Lambda) - \mu_s(\Lambda) \right]\mathrm{d}s
	\]
	converges to $0$ almost surely when $N \to \infty$.
	Indeed, this is a consequence of the Cauchy\,--\,Schwarz inequality and Theorem \ref{thm:uniformLp} (see also \eqref{eq:boundI2p}).
	Thus, $m(\eta_T)(\phi) - \mu_T(\phi)$ converges in law to a centred Gaussian law with variance
	\begin{align*}
		\sigma_T^2(\phi) = & \mu_T \Big( \big( \phi - \mu_T (\phi)\big)^2 \Big) +  \int_0^T S_{\mu_s} \big( W_{s,T}(\bar{\phi}_T) \big) \mathrm{d}s  + 2   \int_0^T \mu_s \left( W_{s,T}(\bar{\phi}_T)^2 V_{\mu_s}^{\mathrm{b}} \right) +\mu_s \big( W_{s,T}(\bar{\phi}_T)^2\big) \mu_s \big( V_{\mu_s}^\mathrm{d} \big) \mathrm{d}s. 
	\end{align*}
	Consider now the change of variables $u = T - s$ in the last integral of the previous expression, and then take limit when $T \rightarrow \infty$.
	The final result comes due the following convergences:
	\begin{align*}
		\mu_{T-s} &\xrightarrow[\enskip T \to \infty \enskip]{} \mu_{\infty}, \\
		\bar{\phi}_T = \phi - \mu_T(\phi) &\xrightarrow[\enskip T \to \infty \enskip]{} \phi - \mu_{\infty}(\phi), \\ 
		W_{T-s,T} (\bar{\phi}_T) &\xrightarrow[\enskip T \to \infty \enskip]{} \frac{P_s^\Lambda (\bar{\phi}_\infty)}{ \mu_\infty P_s^\Lambda(\pmb{1}) } = \mathrm{e}^{- \lambda s} P_s^{\Lambda} (\bar{\phi}_\infty), 
	\end{align*}
	where the last identity is a consequence of the identity $\mu_\infty(\Lambda) = \lambda$ and 
\begin{equation} \label{eq:identity_Pt1}
	\mu_t \left( P_{T-t}^\Lambda(\pmb{1}) \right) = \exp \left\{ \int_t^T \mu_s(\Lambda) \mathrm{d} s \right\}.
\end{equation}
Indeed, \eqref{eq:identity_Pt1} is a consequence of the next two identities
\begin{align*}
	\frac{\mathrm{d}}{\mathrm{d} t} \ln \Big(\mu_0 P_t^\Lambda (\pmb{1}) \Big) = \mu_t(\Lambda)  \;\;\; \text{ and } \;\;\; \mu_t \left(P_{T-t}^\Lambda(\pmb{1}) \right) = \frac{\mu_0 \left( P_{T}^\Lambda(\pmb{1}) \right)}{\mu_0 \left(P_{t}^\Lambda(\pmb{1})\right)}.
\end{align*}
	
\end{proof}

\section*{Acknowledgements}

The authors would like to thank Djalil Chafa\"{i}, Pierre Del Moral, Simona Grusea, Vlada Limic and Denis Villemonais for their careful reading and helpful comments.
The authors would like to extend their gratitude to the anonymous reviewers whose comments and suggestions greatly improved the quality of this manuscript.
The work of the first author was partially supported by the ANR NOLO ANR-20-CE40-0015. 
The work of the second author was partially supported by the ITI IRMIA++.



\providecommand{\noopsort}[1]{}


\newpage

\appendix
\renewcommand{\thesection}{\Alph{section}}

\section{Proof of Lemma \ref{lemma:control_initial_condition}}\label{sec:appendix}

Let us first prove the following result, which has an independent interest.

\begin{lemma}[$\mathbb{L}^p$ norm bound for sum of i.i.d.\ centred r.v.]\label{lemma:bounding_norm_p_sum_va}
	Let us consider $Y_1,Y_2, \dots$ a sequence of independent identically distributed random variables with zero-mean and finite second moment, such that $\mathbb{E}[|Y_1|^p] < \infty$, for a given $p \ge 1$.
	Then, there exists a universal constant $C_p$ such that
	\[
	\left( \mathbb{E}\left[ \left| \frac{1}{N} \sum_{i = 1}^N Y_i \right|^p \right]  \right)^{1/p} \le \frac{C_p}{\sqrt{N}}.
	\]
\end{lemma}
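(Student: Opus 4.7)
The plan is to invoke the Marcinkiewicz\,--\,Zygmund inequality (for which sharp constants are discussed in \cite{MR1841623}, already referenced in the paper), then reduce to a moment estimate on the sum of squares. I would split the argument depending on whether $p \ge 2$ or $1 \le p < 2$, since the two regimes use different convexity properties.

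Suppose first $p \ge 2$. The Marcinkiewicz\,--\,Zygmund inequality gives a constant $A_p$ such that
\[
\mathbb{E}\Bigl[\Bigl| \sum_{i=1}^{N} Y_i \Bigr|^{p}\Bigr] \le A_p \, \mathbb{E}\Bigl[\Bigl( \sum_{i=1}^{N} Y_i^{2} \Bigr)^{p/2}\Bigr].
\]
Since $p/2 \ge 1$, Minkowski's inequality in $\mathbb{L}^{p/2}$ applied to the nonnegative summands $Y_i^{2}$ yields
\[
\mathbb{E}\Bigl[\Bigl( \sum_{i=1}^{N} Y_i^{2} \Bigr)^{p/2}\Bigr]^{2/p} \le \sum_{i=1}^{N} \mathbb{E}\bigl[|Y_i|^{p}\bigr]^{2/p} = N\, \mathbb{E}\bigl[|Y_1|^{p}\bigr]^{2/p},
\]
and hence $\mathbb{E}[|\sum_{i} Y_i|^{p}] \le A_p\, N^{p/2}\, \mathbb{E}[|Y_1|^{p}]$. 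Dividing by $N^{p}$ and taking the $p$-th root yields the claim with $C_p = A_p^{1/p}\, \mathbb{E}[|Y_1|^{p}]^{1/p}$.

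For the remaining range $1 \le p \le 2$, I would avoid Marcinkiewicz\,--\,Zygmund altogether and instead observe that $x \mapsto |x|^{2/p}$ is convex on $\mathbb{R}_+$. Jensen's inequality gives
\[
\mathbb{E}\Bigl[\Bigl|\tfrac{1}{N} \sum_{i=1}^{N} Y_i\Bigr|^{p}\Bigr] \le \mathbb{E}\Bigl[\Bigl|\tfrac{1}{N} \sum_{i=1}^{N} Y_i\Bigr|^{2}\Bigr]^{p/2} = \Bigl(\tfrac{\operatorname{Var}(Y_1)}{N}\Bigr)^{p/2},
\]
by the independence and zero mean of the $Y_i$, so one can take $C_p = \operatorname{Var}(Y_1)^{1/2}$. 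Combining the two regimes, an overall constant $C_p$ depending only on $p$ and on $\mathbb{E}[|Y_1|^{p}]$ suffices.

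There is no real obstacle here: the only delicate point is making sure one applies Minkowski in the correct direction, i.e.\ on $Y_i^{2}$ in $\mathbb{L}^{p/2}$ when $p \ge 2$, which is valid precisely because then $p/2 \ge 1$. For the target application in Lemma~\ref{lemma:control_initial_condition} one takes $Y_i = \pmb{1}_{\{\xi_i = x\}} - \mu_0(x)$ or more generally $Y_i = \phi(\xi_i) - \mu_0(\phi)$ with $\phi \in \mathcal{B}_1(E)$, which are bounded by $2$ and hence have finite moments of all orders; the uniformity in $\phi$ is then automatic since $C_p$ depends only on $\|Y_1\|_p \le 2$.
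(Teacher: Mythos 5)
Your proposal is correct and follows essentially the same route as the paper's proof: Jensen's inequality (via the $\mathbb{L}^p$--$\mathbb{L}^2$ comparison) for $1 \le p \le 2$, and the Marcinkiewicz--Zygmund inequality for $p \ge 2$, the only difference being that you derive the final moment bound from the sum-of-squares form of Marcinkiewicz--Zygmund via Minkowski in $\mathbb{L}^{p/2}$, whereas the paper quotes that consequence directly from the cited reference.
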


\begin{proof}
	
	First note that for $p \le 2$ we get the following result as a consequence of Jensen inequality for concave functions:
	\begin{align*}
		\mathbb{E}\left[ \left| \frac{1}{N} \sum_{i = 1}^N Y_i \right|^p \right] &= \mathbb{E}\left[ \left( \left( \frac{1}{N} \sum_{i = 1}^N Y_i  \right)^2 \right)^{p/2} \right] \le \left( \mathbb{E}\left[  \left( \frac{1}{N} \sum_{i = 1}^N Y_i \right)^2  \right]\right)^{p/2} = \left( \frac{\mathbb{E}[Y^2]}{N} \right)^{p/2}.
	\end{align*}
	
	For $p > 2$, the proof follows from
	Marcinkiewicz\,--\,Zygmund inequality, which is a consequence of the BDG inequality for discrete-time martingales.
	Indeed, the Marcinkiewicz–Zygmund inequality (cf.\ \cite{MR1841623}) ensures us that
	\begin{equation}\label{eq:MZineq}
		\mathbb{E}\left[ \left| \sum_{i=1}^N Y_i\right|^p \right] \le \frac{K_p}{N^{p/2}} \mathbb{E}\left[ |Y_1|^p \right].	
	\end{equation}
	
	Thus,
	\[
	\mathbb{E}\left[  \left| \frac{1}{N} \sum_{i=0}^N Y_i \right|^p \right] \le \frac{C_p}{N^{p/2}}, \text{ where } C_p = \left\{
	\begin{array}{ccc}
		(\mathbb{E}[Y_1^2])^{p/2} & \text{ if } & p \le 2\\
		K_p \mathbb{E}[|Y_1|^p] & \text{ if } & p > 2.
	\end{array}
	\right.
	\]
	
\end{proof}

\begin{remark}[Qualitative results for the Marcinkiewicz–Zygmund constant $K_p$]
	See the work of Ren and Liang \cite{MR1841623} for a qualitative study of the constant $K_p$ in inequality \eqref{eq:MZineq}.
	They show that $(K_p)^{1/p}$ grows like $\sqrt{p}$, when $p \rightarrow \infty$, and give the estimate $K_p \le (3 \sqrt{2})^p p^{p/2}$. 
\end{remark}

\section{Auxiliary results for proving Theorem \ref{thm:propagation_chaos} }
\begin{proof}[Proof of Lemma \ref{lemma:control_initial_condition}]
	Note that
	\(
	m(\eta_0)(\phi) = \frac{1}{N} \sum_{i = 1}^N \phi\big(\xi_0^{(i)}\big),
	\)
	where $\xi_0^{(i)}$, for $i = 1,\dots,N$ are independent random variables.
	Moreover, $\phi\big(\xi_0^{(i)}\big)$ has mean $\mu_0(\phi)$, for all $i = 1,\dots,N$.
	Thus,
	\[
	m(\eta_0)(\phi) - \mu_0(\phi) = \sum_{i = 1}^N \frac{\phi\big(\xi_0^{(i)}\big) - \mu_0(\phi)}{N},
	\]
	can be written as a sum of $N$ zero-mean random variables. 
	The result comes from Lemma \ref{lemma:bounding_norm_p_sum_va}.
\end{proof}

\subsection{Proof of Lemma \ref{lemma:1} } \label{app:proof_lemma:1}

The first identity is simply a consequence of \eqref{eq:gen_mut} and \eqref{eq:gen_sel}, and the fact that
\begin{equation}\label{eq:removing_symmetric}
	\mu(Q_\mu \phi) = \mu\big(\widetilde{Q}_\mu \phi\big).
\end{equation}
Now, to prove the second identity, note that
\begin{align*}
	\Gamma_{\mathcal{Q}} \Big(m(\cdot)(\phi) \Big)(\eta) &= \sum_{x \in E} \eta(x) \sum_{y \in E} \left( Q_{x,y} + V_{m(\eta)}(x,y) \frac{\eta(y)}{N} \right) [m(\eta - \mathbf{e}_x + \mathbf{e}_y)(\phi) - m(\eta)(\phi)]^2 \\
	&= \frac{1}{N} \sum_{x \in E} \frac{\eta(x)}{N} \sum_{y \in E} \left( Q_{x,y} + V_{m(\eta)}(x,y) \frac{\eta(y)}{N} \right) [\phi(y) - \phi(x)]^2\\
	&= \frac{1}{N} \sum_{x \in E} \left( \sum_{y \in E} \big(Q_{x,y} + V_{m(\eta)}(x,y) m_ y(\eta) \big) [\phi(y) - \phi(x)]^2 \right) m_{x}(\eta)\\ 
	&= \frac{1}{N} m(\eta) \left( \Gamma_{Q_{m(\eta)}} (\phi) \right). \;\; \qedsymbol
\end{align*}

\subsection{Proof of Proposition \ref{prop:martingale}} \label{app:proof_prop:martingale}
	The usual martingale problem associated to $(\eta_t)_{t \ge 0}$ implies that for every function $\phi$ on $E$, the process
	\begin{align*}
		t \mapsto& m(\eta_t) (\phi) - m(\eta_0) (\phi) - \int_0^t \mathcal{Q} (m(\eta_s) (\phi)) \mathrm{d}s \\
		&=
		m(\eta_t) (\phi) - m(\eta_0) (\phi) - \int_0^t m(\eta_s) \big(\widetilde{Q}_{m(\eta_s)} (\phi)\big) \mathrm{d}s 
	\end{align*}
	is a local martingale.
	Note that the equality is due to the first identity in Lemma \ref{lemma:1}.
	Then, for a function $\psi$ on $E \times \mathbb{R}_+$, continuously differentiable in $\mathbb{R}_+$, the It\^{o} formula implies that
	\( 
	\left(\mathcal{M}_t(\psi_\cdot)\right)_{t \ge 0}
	\)
	is a local martingale, as desired.
	
	The predictable quadratic variation is obtained using the identity
	\[
	\langle \mathcal{M}(\psi) \rangle_t = \int_0^t \Gamma_{\mathcal{Q}}\big(m(\eta_s)(\psi_s) \big) \mathrm{d} s,
	\]
	and the final result comes from the second identity in Lemma \ref{lemma:1}.
	
	Furthermore, the bound for the jump is due to the fact that each jump only concerns one particle that jumps from one position to another. \qedsymbol
	
\subsection{Proof of Lemma \ref{lemma:bound_quadratic_variation:additive}}
\label{app:proof_lemma:bound_quadratic_variation:additive}

The predictable quadratic variation of the martingale $\Big( \mathcal{M}_t \left( P(\cdot, T)\big(\phi\big) \right) \Big)_{t \in [0,T]}$ satisfies
\begin{align*}
	N \Big\langle \mathcal{M}\Big(P(\cdot, T)\big( \phi \big) \Big) \Big\rangle_t &= \int_0^t m(\eta_s) \left( \Gamma_{Q_{m(\eta_s)}} \left(P(s,T)\big( \phi \big) \right) \right) \mathrm{d}s\\
	&= \int_0^t m(\eta_s) \left( \widetilde{Q}_{m(\eta_s)} \left( P(s,T)\big( \phi \big)^{2} \right) - 2 P(s,T)\big( \phi \big) \cdot Q_{m(\eta_s)} \Big( P(s,T)\big( \phi \big) \Big) \right) \mathrm{d}s,
\end{align*}
where the second equality holds because of the definition of {carr\'e-du-champ} operator and \eqref{eq:removing_symmetric}.

Thus, using \eqref{eq:martingalePtT2} we get
\begin{align*}
	N \Big\langle \mathcal{M}&\Big(P(\cdot, T)\big( \phi \big) \Big) \Big\rangle_t 
	= - \mathcal{M}_t \Big( P(\cdot, T)\big( \phi \big)^{2} \Big) - m(\eta_t) \Big( P(t,T)\big( \phi \big)^{2} \Big) + m(\eta_0) \Big( P(0,T)\big( \phi \big)^{2} \Big)\\
	&+  2 \int_0^t m(\eta_s)\Big( P(s,T) \big( \phi \big) \cdot \Big[ \Big( \widetilde{Q}_{\mu_s} - Q_{m(\eta_s)} \Big) \Big( P(s,T)\big( \phi \big) \Big) \Big] \Big) \mathrm{d}s.
\end{align*}

Now, because of \eqref{eq:bound_integral_WtT} and the boundedness conditions on $V_\mu$ in Assumption \ref{assump:gral_slection rate} we can ensure the existence of a constant $C > 0$ such that
\[
N \Big\langle \mathcal{M}\Big(P(\cdot, T)\big( \phi \big)\Big) \Big\rangle_t \le C( t + 1) - \mathcal{M}_t \left( P(\cdot, T)\big( \phi \big)^{2} \right). \;\; \qedsymbol
\]

\subsection{Proof of Lemma \ref{thm:control_quadratic_variation:additive}}
\label{app:proof_thm:control_quadratic_variation:additive}

First, by localisation, we can suppose that the martingales are bounded.
Now, we will prove the inequalities for $p = 2^q$, and then using the Jensen inequality we will extend the result for all $p \ge 1$.
The result for $p \in (0,1)$ is simply a consequence of the result for $p = 1$ and the Jensen inequality for concave functions.

We want to prove the following inequalities:
\begin{align*}
	\mathbb{E} \left[ \left( \Big\langle \mathcal{M}\Big( P(\cdot, T)\big( \phi \big) \Big) \Big\rangle_t \right)^{2^q} \right] &\le \frac{C (t + 1)^{2^q}}{N^{2^q}},\;\;\;    \mathbb{E} \left[ \left( \left[ \mathcal{M}\Big( P(\cdot, T)\big( \phi \big) \Big) \right]_t \right)^{2^q} \right] \le \frac{C (t + 1)^{2^q}}{N^{2^q}}.
\end{align*}
For $q = 0$, the first inequality is a consequence of Lemma \ref{lemma:bound_quadratic_variation:additive} and the second one is due to the fact that $\left( \left[ \mathcal{M}\Big( P(\cdot, T)\big( \phi \big) \Big) \right] -  \Big\langle \mathcal{M}\Big( P(\cdot, T)\big( \phi \big) \Big) \Big\rangle \right)_{t \in [0,T]}$ is a local martingale.

We will prove the previous inequalities by induction.
Let us assume they are true for $q$ and lower.    
Thus, by Lemma \ref{lemma:bound_quadratic_variation:additive} and the Minkowski inequality, there exists a $K > 0$ such that
\begin{align*}
	I_p &:= \mathbb{E} \left[ \Big(N \Big\langle \mathcal{M}\big( P(\cdot, T) ( \phi ) \big) \Big\rangle_t \Big)^{p} \right] \\
	&\le \mathbb{E}\left[  \left( K ( t + 1) + \left|\mathcal{M}_t \Big(P(\cdot, T)\big( \phi \big)^{2}\Big) \right| \right)^p \right] \\
	&\le \left( K( t + 1 ) + \left( \mathbb{E}  \left[ \left| \mathcal{M}_t \left( P(\cdot, T)\big( \phi \big)^{2} \right) \right|^{p} \right] \right)^{1/p}  \right)^{p},
\end{align*}
for all $p \ge 1$.
Using now the BDG inequality, we get
\begin{align*}
	I_{2^{q+1}} &\le \left( K (t+1) + \kappa \left( \mathbb{E} \left[ \left( \left[ \mathcal{M}\Big( P(\cdot, T)\big( \phi \big)^{2} \Big) \right]_t \right)^{2^q} \right]\right) ^{1/2^{q+1}} \right)^{2^{q+1}} \\
	&\le \left( K (t+1) + \kappa \sqrt{\frac{ t + 1}{N} } \right)^{2^{q+1}} \le C' ( t + 1)^{2^{q+1}},
\end{align*}
where the second inequality holds by the induction hypothesis and the last one due to $N \ge 1$ and $t+1 \ge 1$.
Now, the martingale $\left(\mathcal{M}_t\big(P(\cdot, T)(\phi) \big)\right)_{t \in [0,T]}$ has jumps verifying
\[
a \le 2 \frac{\left\|P(\cdot, T)\big( \phi \big) \right\|}{N} \le \frac{2}{N}.
\]

Thus, using Lemma \ref{lemma:BDGineq} we get
\begin{align*}
	\mathbb{E}\left[ \Big( \big[ \mathcal{M}\big(P(\cdot, T)(\phi) \big) \big]_t \Big)^{2^{q+1}} \right] &\le C'' \sum_{k = 0}^{q+1} \frac{\mathbb{E} \left[ \Big( \langle \mathcal{M} \big( P(\cdot, T)(\phi) \big) \rangle_t \Big)^{2^k} \right]}{N^{2^{q+2} - 2^{k+1}}} \le C'' \sum_{k = 0}^{q+1} \frac{(t+1)^{2^k}}{N^{2^{q+2} - 2^{k+1} + 2^k}} \\
	&= \frac{C''}{N^{2^{q+2}}} \sum_{k = 0}^{q+1} \big[N(t+1)\big]^{2^k} = \frac{C'' (q+1)}{N^{2^{q+2}}} N^{2^{q+1}} (t+1)^{2^{q+1}}\\
	&\le C \frac{ (t+1)^{2^{q+1}} }{ N^{2^{q+1}} }.
\end{align*}
This concludes the proof for $p = 2^q$.

Now, for arbitrary $p$, there exists $q$ such that $p \le 2^{q}$.
Thus, using the Jensen inequality (for the concave function $x \mapsto x^{p/2^q}$) we get
\begin{align*}
	\mathbb{E} \left[ \left( \Big\langle \mathcal{M}\Big( P(\cdot, T)\big( \phi \big) \Big) \Big\rangle_t \right)^{p} \right] &\le \left( \mathbb{E} \left[  \left( \Big\langle \mathcal{M}\Big( P(\cdot, T)\big( \phi \big) \Big) \Big\rangle_t \right)^{2^{q}}  \right] \right)^{{p}/{2^q}}\\
	&\le \left( \frac{C (t + 1)^{2^q}}{N^{2^q}} \right)^{p/2^q} \le C^{p/2^q} \frac{(t+1)^{p}}{N^p}.
\end{align*}
The result for $\mathbb{E}\left[ \left( \big[ \mathcal{M}\big(P(\cdot, T) (\phi) \big) \big]_t \right)^{p} \right]$ is analogously obtained. \qedsymbol


\section{Proof of Theorems \ref{thm:uniformLp} and its corollaries } \label{sec:proof_thm2-3}

Obtaining a uniform in time bound as the one provided by Theorem \ref{thm:uniformLp} is a hard problem and this kind of results are uncommon in the literature.
Del Moral and Guionnet in \cite[Thm.\ 3.1]{DelMoralGuionnet2001} have proved a similar result for a resembling but discrete-time model, where the potential function $\Lambda$ is assumed uniformly bounded and also bounded away from zero.
Moreover, their upper bound for the speed of convergence is of order $1/N^\alpha$, with $\alpha < 1/2$.
The same order of convergence $1/N^\alpha$ for a similar model is obtained in Theorem 2.11 in \cite{DelMoralMiclo2000} for the control of the convergence in $\mathcal{F}$-norms, and not only for a single test function.
Also, Theorem 14.3.7 in \cite[\S\ 14.3.3]{zbMATH06190205} provides uniform estimates for time-discretization models, see \cite[\S\ 12.2.3.1]{zbMATH06190205}.
However, those estimates depend on the time mesh and are not bounded when the size of the time steps tend to zero.
Also in the discrete-time settings, see Theorem 5.8 and Corollary 5.12 in \cite{zbMATH02072697}, where a uniform in time bound with the optimal order $1/\sqrt{N}$ is proved.
Rousset \cite[Thm.\ 4.1]{MR2262944} has proved a uniform in time bound in $\mathbb{L}^p$ with the same speed of convergence as our result.
However, the model studied by Rousset is in continuous state space and the diffusion process driving the mutation process is assumed reversible.
Similarly, Angeli et al.\ \cite[Thm.\ 3.2]{Angeli2021} obtained an equivalent result for jump processes on locally compact spaces  in the context of cloning algorithms and for $p \ge 2$.
See also Theorem 5.10 and Corollary 5.12 in \cite{MR4193898}, for a related result when $p=2$.

Our model is different, since we consider the case where the state space is countable and discrete, not necessarily finite
and  in Assumption \ref{assump:additive+symmetric} we allow the selection rates to depend on the empirical probability measure induced by the particle system, in the same spirit of \cite{MR2262944}.
Nonetheless, our methods are similar to those of Rousset \cite{MR2262944} and Angeli et al.\ \cite{Angeli2021} (see also \cite[\S\  3.3.1]{DelMoralMiclo2000}): it consists in finding a martingale indexed by the interval $[0, T]$, whose terminal value at time $T$ is precisely
$m(\eta_T)(\phi) - \mu_T(\phi)$ plus a term whose $\mathbb{L}^p$ norm can be controlled, for any $\phi \in \mathcal{B}_b(E)$.
Thereafter, the final result comes by a control of the quadratic variation of the martingale and an induction principle.

In the rest of this section, we will consider that Assumption \ref{assump:additive+symmetric} is verified. Namely, the selection rates can be written as follows 
\(
V_\mu(x,y) = V_\mu^{\mathrm{d}}(x) + V_\mu^{\mathrm{b}}(y) + V_\mu^\mathrm{s}(x,y), 
\)
where $V_\mu^\mathrm{s}$ is symmetric.

Let us denote by $\bar{m}(\eta_t)$ the mean empirical probability measure induced by $\eta_t$, which is defined as
\[
\bar{m}(\eta_t) := \sum_{x \in E} \mathbb{E}\left[ \frac{ \eta_t (x)}{N} \right] \delta_x \in \mathcal{M}_1(E).	
\]

When $V_\mu^{\mathrm{d}}$ and $V_\mu^{\mathrm{b}}$ are nulls, and thus $V_\mu$ is symmetric, we obtain the following result as an immediate consequence of \eqref{eq:reduction_martingale}.

\begin{corollary}[$V_\mu$ is symmetric]\label{cor:Vsymmetric}
	Assume that Assumption \ref{assump:additive+symmetric} is verified in such a way that $V_{\mu} = V_{\mu}^\mathrm{s}$, and Assumptions \ref{assump:initial_condition} and \ref{assump:ergod_normalised} are also verified.
	Then the process
	\[
	\Big( m(\eta_t)\big( \mathrm{e}^{(T-t) Q} ( \phi ) \big)  - m(\eta_0)( \mathrm{e}^{T Q} \big(\phi) \big) \Big)_{t \in [0,T] }, 
	\]
	is a local martingale, for every $\phi \in \mathcal{B}_b(E)$. 
	In particular,
	\(
	\bar{m}(\eta_t) = \bar{m}(\eta_0) \mathrm{e}^{t Q},	
	\)
	for all $t \ge 0$.
\end{corollary}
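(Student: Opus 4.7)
My plan is to apply the general martingale decomposition of Proposition~\ref{prop:martingale}, in its simplified form \eqref{eq:reduction_martingale} (which is available under Assumption~\ref{assump:additive+symmetric}), to the specific time-dependent test function $\psi_t := \mathrm{e}^{(T-t)Q}(\phi)$ for $t \in [0,T]$, where $\phi \in \mathcal{B}_b(E)$ is arbitrary. Since $\mathrm{e}^{sQ}$ is the semigroup generated by $Q$, it sends $\mathcal{B}_b(E)$ into itself with $\|\mathrm{e}^{sQ}\phi\| \le \|\phi\|$, and $t \mapsto \mathrm{e}^{(T-t)Q}\phi$ is continuously differentiable pointwise, so $\psi$ meets the regularity hypotheses required by Proposition~\ref{prop:martingale}.

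The assumption $V_\mu = V_\mu^{\mathrm{s}}$ forces $V_\mu^{\mathrm{d}} = V_\mu^{\mathrm{b}} = 0$, hence $\Lambda = V_\mu^{\mathrm{b}} - V_\mu^{\mathrm{d}} \equiv 0$, so expression \eqref{eq:reduction_martingale} reduces to
\[
\mathcal{M}_t(\psi_\cdot) = m(\eta_t)(\psi_t) - m(\eta_0)(\psi_0) - \int_0^t m(\eta_s)\bigl(\partial_s \psi_s + Q\psi_s\bigr)\,\mathrm{d}s.
\]
The backward Kolmogorov equation for $\mathrm{e}^{sQ}$ gives $\partial_s \psi_s = -Q\,\mathrm{e}^{(T-s)Q}\phi = -Q\psi_s$, so the integrand vanishes identically. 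Using $\psi_T = \phi$ and $\psi_0 = \mathrm{e}^{TQ}\phi$, this yields exactly
\[
\mathcal{M}_t(\psi_\cdot) = m(\eta_t)\bigl(\mathrm{e}^{(T-t)Q}\phi\bigr) - m(\eta_0)\bigl(\mathrm{e}^{TQ}\phi\bigr),
\]
which is the desired local martingale.

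For the second assertion, I would observe that $|m(\eta_t)(\psi_t)| \le \|\psi_t\| \le \|\phi\|$ uniformly in $t \in [0,T]$, so the local martingale above is bounded and hence a genuine martingale. Taking expectations gives
\[
\bar{m}(\eta_t)\bigl(\mathrm{e}^{(T-t)Q}\phi\bigr) = \bar{m}(\eta_0)\bigl(\mathrm{e}^{TQ}\phi\bigr),
\]
and evaluating at $t = T$ yields $\bar{m}(\eta_T)(\phi) = \bar{m}(\eta_0)\bigl(\mathrm{e}^{TQ}\phi\bigr) = \bigl(\bar{m}(\eta_0)\,\mathrm{e}^{TQ}\bigr)(\phi)$. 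Since $\phi \in \mathcal{B}_b(E)$ and $T \ge 0$ are arbitrary, we conclude $\bar{m}(\eta_t) = \bar{m}(\eta_0)\,\mathrm{e}^{tQ}$ for every $t \ge 0$.

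There is no real obstacle here: the argument is essentially a direct specialisation of the already-established martingale decomposition to the case $\Lambda = 0$ combined with the backward Kolmogorov equation. The only minor point worth checking is the passage from local to true martingale, which is handled by the uniform bound $\|\psi_t\| \le \|\phi\|$ inherited from the contractivity of the Markov semigroup $(\mathrm{e}^{sQ})_{s \ge 0}$.
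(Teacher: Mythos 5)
Your proposal is correct and follows essentially the same route as the paper: take $\psi_t = \mathrm{e}^{(T-t)Q}(\phi)$ in the reduced martingale decomposition \eqref{eq:reduction_martingale}, observe that $\Lambda \equiv 0$ kills the selection terms and the backward Kolmogorov equation kills the remaining integrand. Your extra remark that the uniform bound $\|\psi_t\| \le \|\phi\|$ upgrades the local martingale to a true martingale before taking expectations is a detail the paper leaves implicit, and it is handled correctly.
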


\begin{proof}[Proof of Corollary \ref{cor:Vsymmetric}]
	Note that since the selection rates are symmetric, $\Lambda$ as defined in \eqref{eq:defW} is null.
	The proof simply follows as a consequence of \eqref{eq:reduction_martingale}, taking $\psi_t = \mathrm{e}^{(T-t) Q}(\phi)$, for all $t \in [0,T]$ and $\phi \in \mathcal{B}_b(E)$.
\end{proof}

Let us recall the operator $W_{t,T}$ defined in \eqref{eq:defWtT_intro} as
\(
W_{t,T}	: \phi \mapsto {P_{T-t}^\Lambda (\phi)}/{ \mu_t \Big( P_{T-t}^\Lambda(\pmb{1}) \Big)}.
\)
Recall that $\bar{\phi}_T := \phi - \mu_T(\phi)$.
We get
\(
m(\eta_T) \big(W_{T,T}( \bar{\phi}_T ) \big) = m(\eta_T)(\phi) - \mu_T(\phi),
\)
which is the difference we intend to control.
The following results establish a control of the uniform norm of $W_{t,T}$.

\begin{lemma}\label{lemma:controlling_propagator}
	The operator $(W_{t,T})_{t \in [0,T]}$ verifies the following properties:
	\begin{itemize}
		\item[a)] Given $p \ge 1$, for any test function $\phi \in \mathcal{B}_1(E)$, there exists $C> 0$ such that
		\begin{align*}
			\|W_{t,T} (\phi)\| \le C, \;\; \text{ and } \;\;
			\int_t^T \|W_{s,T}(\phi)\|^{p} \mathrm{d}s \le  C (T - t).  
		\end{align*}
		\item[b)] There exists a constant $\rho \in (0,1)$, such that
		\begin{align*}
			\big\|W_{t,T} \big(\bar{\phi}_T\big)\big\| \le C \rho^{ T - t}, \;\; \text{ and } \;\;
			\int_t^T \big\|W_{s,T}\big(\bar{\phi}_T\big)\big\|^{p} \mathrm{d}s \le C.  
		\end{align*}
	\end{itemize}
\end{lemma}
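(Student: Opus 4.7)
The plan is to exploit identity \eqref{eq:identity_Pt1}, which rewrites the denominator of $W_{t,T}$ as an explicit exponential, together with the exponential ergodicity of the unnormalised Feynman\,--\,Kac semigroup (Lemma \ref{corollary:exp_ergodicity_nonnormalised}) and of the normalised one (Assumption \ref{assump:ergod_normalised}). Concretely, I would start from
\[
W_{t,T}(\phi) = P_{T-t}^{\Lambda}(\phi)\,\exp\!\left\{-\int_t^T \mu_u(\Lambda)\,\mathrm{d}u\right\} = \mathrm{e}^{-\lambda(T-t)} P_{T-t}^{\Lambda}(\phi)\cdot \exp\!\left\{-\int_t^T \big(\mu_u(\Lambda)-\lambda\big)\,\mathrm{d}u\right\},
\]
so that the two factors can be analysed separately.

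The first key step is to observe that the exponential correction factor is bounded above and below by positive constants, uniformly in $0\le t\le T$. Indeed, $\mu_\infty(\Lambda)=\lambda$ (integrate $\mu_\infty P_s^{\Lambda}=\mathrm{e}^{\lambda s}\mu_\infty$ against $\mathbf{1}$ and differentiate at $s=0$), and Assumption \ref{assump:ergod_normalised} gives $|\mu_u(\Lambda)-\lambda|\le C\|\Lambda\|\mathrm{e}^{-\gamma u}$, so the tail $\int_t^T (\mu_u(\Lambda)-\lambda)\,\mathrm{d}u$ is uniformly bounded. For the first factor, Lemma \ref{corollary:exp_ergodicity_nonnormalised} applied at $\delta_x$ yields
\[
\bigl|\mathrm{e}^{-\lambda(T-t)} P_{T-t}^{\Lambda}(\phi)(x) - h(x)\mu_\infty(\phi)\bigr|\le C\mathrm{e}^{-\gamma(T-t)}\|\phi\|,
\]
which is bounded by $(\|h\|+C)\|\phi\|$. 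Combining these two observations gives the uniform bound $\|W_{t,T}(\phi)\|\le C\|\phi\|$ in part (a); the integral estimate $\int_t^T\|W_{s,T}(\phi)\|^p\mathrm{d}s\le C(T-t)$ then follows trivially from the pointwise bound.

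For part (b) the trick is that $\bar{\phi}_T=\phi-\mu_T(\phi)$ has a small projection on $h\mu_\infty$: applying the same decomposition to $\bar{\phi}_T$ yields
\[
\bigl|\mathrm{e}^{-\lambda(T-s)} P_{T-s}^{\Lambda}(\bar{\phi}_T)(x)\bigr|\le \|h\|\,|\mu_\infty(\phi)-\mu_T(\phi)| + C\mathrm{e}^{-\gamma(T-s)}\|\bar{\phi}_T\|,
\]
and since $|\mu_\infty(\phi)-\mu_T(\phi)|\le C\mathrm{e}^{-\gamma T}$ by Assumption \ref{assump:ergod_normalised}, the elementary inequality $\mathrm{e}^{-\gamma T}=\mathrm{e}^{-\gamma s}\mathrm{e}^{-\gamma(T-s)}\le \mathrm{e}^{-\gamma(T-s)}$ (valid for $s\ge 0$) collapses both terms into a single geometric factor with rate $\rho=\mathrm{e}^{-\gamma}$. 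The corresponding integral bound is then immediate: $\int_t^T \mathrm{e}^{-p\gamma(T-s)}\,\mathrm{d}s\le 1/(p\gamma)$, uniformly in $t\le T$.

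The only mildly delicate point is the reconciliation of the exponential decay in $T-s$ with the decay in $T$ in the mean-zero contribution coming from $\mu_T(\phi)-\mu_\infty(\phi)$; the observation $\mathrm{e}^{-\gamma T}\le \mathrm{e}^{-\gamma(T-s)}$ for $s\ge 0$ handles this cleanly and is really the heart of (b). Everything else reduces to assembling the already established ergodicity statements.
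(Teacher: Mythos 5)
Your proof is correct, and it reaches the two estimates by a route that differs in its mechanics from the paper's, even though both rest on the same two ingredients (Lemma \ref{corollary:exp_ergodicity_nonnormalised} and Assumption \ref{assump:ergod_normalised}). For the denominator, the paper writes $\mu_t\big(P_{T-t}^\Lambda(\pmb{1})\big) = \mu_0 P_T^\Lambda(\pmb{1})/\mu_0 P_t^\Lambda(\pmb{1})$ and invokes the fact that $t \mapsto \mathrm{e}^{-\lambda t}\mu_0 P_t^\Lambda(\pmb{1})$ is continuous, positive and converges to $\mu_0(h) > 0$ (this is where the strict positivity of $h$ enters); you instead use the identity \eqref{eq:identity_Pt1} together with $\mu_\infty(\Lambda)=\lambda$ and the bound $|\mu_u(\Lambda)-\lambda| \le 2C\|\Lambda\|\mathrm{e}^{-\gamma u}$ from \ref{assump:ergod_normalised}, so the uniform two-sided control of the correction factor $\exp\{-\int_t^T(\mu_u(\Lambda)-\lambda)\,\mathrm{d}u\}$ comes for free without appealing to $\mu_0(h)>0$. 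For part b) the divergence is more substantial: the paper combines numerator and denominator into the single antisymmetrised expression $\big(\mu_t P_{T-t}^\Lambda(\pmb{1})\cdot P_{T-t}^\Lambda(\phi) - \mu_t P_{T-t}^\Lambda(\phi)\cdot P_{T-t}^\Lambda(\pmb{1})\big)/\big(\mu_t P_{T-t}^\Lambda(\pmb{1})\big)^2$ and extracts the rate $\rho^{T-t}$ from the cancellation of the limits $\mu_t(h)\,h\,\mu_\infty(\phi) - \mu_t(h)\mu_\infty(\phi)\,h = 0$; you instead apply the ergodicity estimate directly to the centred function $\bar{\phi}_T$, isolate the residual term $h\,\mu_\infty(\bar{\phi}_T)$, and kill it via $|\mu_\infty(\phi)-\mu_T(\phi)| \le C\mathrm{e}^{-\gamma T} \le C\mathrm{e}^{-\gamma(T-s)}$. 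Your version is arguably more transparent, since it makes explicit where the decay in $T-s$ versus the decay in $T$ come from and how they are reconciled; the paper's version is more compact and keeps all the work inside a single algebraic identity. Both yield the same constants up to inessential factors, and the integral bounds follow identically in either case.
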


\begin{proof}[Proof of Lemma \ref{lemma:controlling_propagator}]
	
	The proof of this result is inspired by the proof of Lemma 5.1 in \cite{MR2262944}, but we do not make any direct assumption of the spectrum of $Q + \Lambda$.
	
	Note that
	\(
	\mu_t \big( P_{T-t}^\Lambda (\pmb{1}) \big) = {\mu_0 P_{T}^\Lambda (\pmb{1}) }/{\mu_0 P_t^\Lambda(\pmb{1})}.
	\)
	Moreover, using Lemma \ref{corollary:exp_ergodicity_nonnormalised}, the function $t \mapsto \mathrm{e}^{-\lambda t} \mu_0 \big( P^{\Lambda}_t(\pmb{1}) \big)$ is continuous and positive, going from $1$ to $\mu_0(h) > 0$.
	This proves part a).
	
	To prove part b) of the lemma, note that
	\[
	\mu_T(\phi) = \frac{\mu_t P_{T-t}^\Lambda(\phi)}{\mu_t P_{T-t}^\Lambda(\pmb{1})}
	\;\;\; \text{ and } \;\;\; 
	W_{t,T}(\mu_T(\phi)) = \mu_T(\phi) \frac{P_{T-t}^\Lambda(\pmb{1})}{\mu_t P_{T-t}^\Lambda(\pmb{1})},
	\]
	since $\mu_T(\phi)$ is constant.
	Thus,
	\begin{align*}
		\| W_{t,T}(\bar{\phi}_T) \| &= \left\| \frac{P_{T-t}^\Lambda (\phi)}{\mu_t P_{T-t}^\Lambda (\pmb{1})} - \mu_T(\phi) \frac{P_{T-t}^\Lambda(\pmb{1})}{\mu_t P_{T-t}^\Lambda(\pmb{1})} \right\| \\
		&= \left\| \frac{  \mu_t P_{T-t}^\Lambda(\pmb{1}) \cdot P_{T-t}^\Lambda(\phi) -  \mu_t P_{T-t}^\Lambda(\phi) \cdot P_{T-t}^\Lambda(\pmb{1})  }{
			\left( \mu_t P_{T-t}^\Lambda(\pmb{1}) \right)^2
		}  \right\| \le C \rho^{T-t},
	\end{align*}
	where the last inequality is a consequence of the fact that the function $t \mapsto \mathrm{e}^{- \lambda t} \mu_0 P^{\Lambda}_t(\pmb{1})$ is bounded away from zero, and the uniform convergence of $\mathrm{e}^{-\lambda t} P^\Lambda_t (\phi)$ towards $h \mu_{\infty}(\phi)$, when $t \to \infty$, claimed in Lemma \ref{corollary:exp_ergodicity_nonnormalised}.
\end{proof}

Let $\psi$ be a function on $E \times \mathbb{R}_+$ as in the statement of Proposition \ref{prop:martingale}.
We denote by $\big( \mathcal{M}^T_t (\psi_{\cdot, T}) \big)_{t \in [0,T]}$ the local martingale defined as
\(
	\mathcal{M}_t^T \left( \psi_{\cdot, T} \big(\phi\big) \right) := \mathcal{M}_T \left( \psi_{\cdot, T} \big(\phi\big) \right) - \mathcal{M}_t \left( \psi_{\cdot, T} \big(\phi\big) \right),
\)
for all $t \in [0,T]$.
We denote by $\big(\langle \mathcal{M} (\psi_\cdot) \rangle_t^T \big)_{t \in [0,T]}$, the predictable quadratic variation of the local martingale $\big( \mathcal{M}^T_t (\psi_{\cdot, T}) \big)_{t \in [0,T]}$.

Using \eqref{eq:predictable-quad-variation_decomposition} we can prove the next two results, analogously to Lemmas \ref{lemma:bound_quadratic_variation:additive} and \ref{thm:control_quadratic_variation:additive}, establishing a control on the predictable quadratic variation and the quadratic variation of the martingale  $\big( \mathcal{M}^T_t (\psi_{\cdot, T}) \big)_{t \in [0,T]}$, respectively.

\begin{lemma}[Control of the predictable quadratic variation]
	\label{lemma:bound_quadratic_variation}
	For every test function $\phi \in \mathcal{B}_1(E)$ and every $t \in [0,T]$ we have
	\begin{align*}
		N \Big\langle \mathcal{M}\left(W_{\cdot, T}\big( \phi \big) \right) \Big\rangle_t^T &\le C (T - t + 1) - \mathcal{M}_t^T \Big(W_{\cdot, T}\big( \phi \big)^{2}\Big), \;\; \text{ for all } \;\;  t \in [0,T],
	\end{align*}
	and for $\bar{\phi}_T = \phi - \mu_T(\phi)$ we have
	\[
	N \Big\langle \mathcal{M}\left(W_{\cdot, T}\left(\bar{\phi}_T\right) \right) \Big \rangle_t^T \le C - \mathcal{M}_t^T \left(W_{\cdot, T} \left( \bar{\phi}_T \right)^{2} \right), \;\; \text{ for all } \;\;  t \in [0,T].
	\]
	
\end{lemma}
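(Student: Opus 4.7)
The plan is to apply the decomposition \eqref{eq:predictable-quad-variation_decomposition} of $N\langle \mathcal{M}(\psi_{\cdot,T})\rangle_t$ obtained earlier (for a generic bounded time-dependent test function) to the two choices $\psi_{s,T}=W_{s,T}(\phi)$ and $\psi_{s,T}=W_{s,T}(\bar\phi_T)$, and then to take the difference of the identity at times $T$ and $t$, so that $\mathcal{M}_T-\mathcal{M}_t$ and $\langle\mathcal{M}\rangle_T-\langle\mathcal{M}\rangle_t$ appear on the two sides. All the remaining terms in \eqref{eq:predictable-quad-variation_decomposition} will be non-negative boundary contributions of the form $m(\eta_\cdot)(\psi_{\cdot,T}^{2})$, plus integrals on $[t,T]$ of quantities of the form $m(\eta_s)(\psi_{s,T}^{2} V^{\mathrm{b}}_{m(\eta_s)})$, $m(\eta_s)(\psi_{s,T}^{2})\,m(\eta_s)(V^{\mathrm{d}}_{m(\eta_s)})$, $S_{m(\eta_s)}(\psi_{s,T})$, together with the residue $R_{T}-R_{t}$ defined in \eqref{def:R_t}.

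The second step is to use the boundedness assumptions. Under \ref{assump:additive+symmetric}, the functions $V^{\mathrm{b}}_\mu$, $V^{\mathrm{d}}_\mu$, $\mathcal{V}_\mu$, $V^{\mathrm{s}}_\mu$ and $\Lambda$ are all uniformly bounded, so each integrand above is pointwise controlled by a constant multiple of $\|\psi_{s,T}\|^{2}$ (plus a constant factor for the pieces of $R_T-R_t$ involving $[\mu_s(\Lambda)-m(\eta_s)(\Lambda)]$). Hence the entire ``non-martingale part'' is bounded above by
\[
C\Big( \|\psi_{T,T}\|^{2}+\|\psi_{t,T}\|^{2} + \int_{t}^{T}\|\psi_{s,T}\|^{2}\,\mathrm{d}s \Big) -\mathcal{M}_{t}^{T}(\psi_{\cdot,T}^{2}).
\]

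The third step is to insert the quantitative estimates of Lemma \ref{lemma:controlling_propagator}. For the first inequality, with $\psi_{s,T}=W_{s,T}(\phi)$ and $\phi\in\mathcal{B}_{1}(E)$, part (a) gives $\|W_{s,T}(\phi)\|\le C$ and $\int_{t}^{T}\|W_{s,T}(\phi)\|^{2}\,\mathrm{d}s\le C(T-t)$, which together yield exactly the announced bound $C(T-t+1)$. For the second inequality, with $\psi_{s,T}=W_{s,T}(\bar\phi_T)$, one uses instead part (b): the exponential decay $\|W_{s,T}(\bar\phi_T)\|\le C\rho^{T-s}$ with $\rho\in(0,1)$ makes the integrals $\int_{t}^{T}\|W_{s,T}(\bar\phi_T)\|^{2}\,\mathrm{d}s$ and the boundary contributions uniformly bounded by a constant independent of $T-t$, which produces the desired constant upper bound.

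The only place where anything non-routine happens is the handling of the residue $R_T-R_t$ from \eqref{def:R_t}: it contains the signed factor $[\mu_s(\Lambda)-m(\eta_s)(\Lambda)]$ and one must notice that it is nonetheless dominated by $2\|\Lambda\|\int_{t}^{T}\|\psi_{s,T}\|^{2}\mathrm{d}s$ plus a Cauchy--Schwarz control of the second integrand involving $\mathcal{V}_{m(\eta_s)}$, so it re-enters the same $\int_{t}^{T}\|\psi_{s,T}\|^{2}\mathrm{d}s$ pattern already treated; this is the one point which has to be checked carefully but it introduces no new difficulty. All in all the argument mirrors that of Lemma \ref{lemma:bound_quadratic_variation:additive}, with the extra ingredient being Lemma \ref{lemma:controlling_propagator}(b) to ensure uniformity in $T-t$ for the centred version.
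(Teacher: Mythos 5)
Your proposal is correct and follows exactly the route the paper intends: it instantiates the decomposition \eqref{eq:predictable-quad-variation_decomposition} at times $T$ and $t$, bounds the boundary terms, the $V^{\mathrm{b}}_\mu$, $V^{\mathrm{d}}_\mu$, $S_\mu$ integrals and the residue $R_T-R_t$ by $C\big(\|\psi_{T,T}\|^2+\int_t^T\|\psi_{s,T}\|^2\,\mathrm{d}s\big)$ using the uniform bounds of Assumption \ref{assump:additive+symmetric}, and then invokes Lemma \ref{lemma:controlling_propagator}(a) for $W_{\cdot,T}(\phi)$ and (b) for $W_{\cdot,T}(\bar{\phi}_T)$ — precisely the substitution of Lemma \ref{lemma:controlling_propagator} for \eqref{eq:bound_integral_WtT} that the paper indicates when it declares the proof analogous to that of Lemma \ref{lemma:bound_quadratic_variation:additive}. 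The only cosmetic remark is that the second piece of $R_T-R_t$ needs no Cauchy--Schwarz: the crude bounds $|m(\eta_s)(\psi_{s,T})|\le\|\psi_{s,T}\|$ and $|m(\eta_s)(\mathcal{V}_{m(\eta_s)}\psi_{s,T})|\le\|\mathcal{V}_{m(\eta_s)}\|\,\|\psi_{s,T}\|$ already give the required $\int_t^T\|\psi_{s,T}\|^2\,\mathrm{d}s$ pattern.
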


\begin{lemma}[Control of the quadratic variation]\label{thm:control_quadratic_variation}
	For all $p > 0$ and every test function $\phi \in \mathcal{B}_1(E)$ there exists a positive $C_p$ (possibly depending on $p$) such that
	\[
	\mathbb{E} \left[ \left( \left[ \mathcal{M}\Big( W_{\cdot, T}\big( \phi \big) \Big) \right]_t^T \right)^p \right] \le \frac{C_p ( T - t + 1)^p}{N^p},
	\]
	and for a centred test function $\bar{\phi}_T = \phi - \mu_T(\phi)$:
	\[
	\mathbb{E} \left[ \left( \left[ \mathcal{M}\Big( W_{\cdot, T}\big( \bar{\phi}_T \big) \Big) \right]_t^T \right)^p \right] \le \frac{C_p}{N^p}.
	\]
\end{lemma}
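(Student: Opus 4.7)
The plan is to mimic the inductive proof of Lemma \ref{thm:control_quadratic_variation:additive}, with the propagator $P(s,T)$ replaced by the operator $W_{s,T}$ and the trivial uniform bound $\|P(s,T)(\phi)\| \le \|\phi\|$ replaced by the quantitative estimates of Lemma \ref{lemma:controlling_propagator}. By Jensen's inequality it suffices to prove both claims for $p = 2^q$ with $q \in \mathbb{N}$ and then to extend to arbitrary $p > 0$ (the case $p \in (0,1)$ being handled by concavity); by a standard localisation argument we may assume throughout that the martingales involved are bounded.

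The base case $q=0$ follows immediately from Lemma \ref{lemma:bound_quadratic_variation}, since $[\mathcal{M}]^T - \langle \mathcal{M}\rangle^T$ is a local martingale of zero expectation after localisation and the martingale term $\mathcal{M}_t^T(W_{\cdot,T}(\phi)^2)$ appearing on the right-hand side of Lemma \ref{lemma:bound_quadratic_variation} also vanishes in expectation. This yields $\mathbb{E}[\langle \mathcal{M}(W_{\cdot,T}(\phi)) \rangle_t^T] \le C(T-t+1)/N$ in the general case and the uniform in time bound $\mathbb{E}[\langle \mathcal{M}(W_{\cdot,T}(\bar{\phi}_T)) \rangle_t^T] \le C/N$ in the centred case.

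For the induction step from $q$ to $q+1$, starting from the pointwise estimate of Lemma \ref{lemma:bound_quadratic_variation} I would raise to the power $2^{q+1}$, take expectations, apply Minkowski's inequality, and then invoke the generalised BDG inequality of Lemma \ref{lemma:BDGineq} on the martingale $\mathcal{M}(W_{\cdot,T}(\phi)^2)$ (respectively $\mathcal{M}(W_{\cdot,T}(\bar{\phi}_T)^2)$), whose jumps are bounded by $2\|W_{t,T}(\phi)\|^2/N$ via Proposition \ref{prop:martingale}. By Lemma \ref{lemma:controlling_propagator} these jump sizes are uniformly of order $1/N$ in the general case and of order $\rho^{2(T-t)}/N$ in the centred case, which is crucial for obtaining the sharp exponents. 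The induction hypothesis, applied at step $q$ to the squared bounded time-dependent functions $W_{\cdot,T}(\phi)^2$ and $W_{\cdot,T}(\bar{\phi}_T)^2$ viewed as test functions entering Proposition \ref{prop:martingale}, then provides the required control on the quadratic-variation moments of these auxiliary martingales, and assembling all the pieces gives the bound at $q+1$.

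The main obstacle will be the centred statement, where the final estimate has to be independent of $T-t$. Here the key is to combine two facts from Lemma \ref{lemma:controlling_propagator} b): the integrals $\int_t^T \|W_{s,T}(\bar{\phi}_T)\|^p \mathrm{d}s$ are uniformly bounded in $T-t$, and the exponential decay $\rho^{T-t}$ of the jump sizes survives through the BDG estimates. Careful bookkeeping is needed so that the potentially dangerous $(T-t)$ factors arising from integration over $[t,T]$ are absorbed by the geometric decay, leaving a bound of the form $C_p/N^p$. A secondary technical point is that $W_{\cdot,T}(\phi)^2$ is not itself of the form $W_{\cdot,T}(\psi)$ for a single $\psi \in \mathcal{B}_b(E)$, so the induction must be formulated at the level of general bounded time-dependent test functions entering Proposition \ref{prop:martingale}, rather than test functions of the special form $W_{\cdot,T}(\phi)$.
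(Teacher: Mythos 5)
Your proposal is correct and follows essentially the same route as the paper, which itself only sketches this proof by declaring it analogous to Lemma \ref{thm:control_quadratic_variation:additive} with the bound \eqref{eq:bound_integral_WtT} replaced by Lemma \ref{lemma:controlling_propagator} (part b) giving the uniform-in-$(T-t)$ estimate in the centred case). Your remark that the induction must be run over general bounded time-dependent test functions (since $W_{\cdot,T}(\phi)^2$ is not of the form $W_{\cdot,T}(\psi)$) is a sound clarification of a point the paper leaves implicit, not a departure from its argument.
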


The proofs of Lemmas \ref{lemma:bound_quadratic_variation} and \ref{thm:control_quadratic_variation} are analogous to those of Lemmas \ref{lemma:bound_quadratic_variation:additive} and \ref{thm:control_quadratic_variation:additive}, respectively.
They are obtained using Lemma \ref{lemma:controlling_propagator} instead of \eqref{eq:bound_integral_WtT}.
In particular, the second inequalities in both results are consequences of part b) of Lemma \ref{lemma:controlling_propagator}. 
See also the proofs of Lemma 5.3 and Theorem 5.4 in \cite{MR2262944}.
We skip the proofs of Lemmas \ref{lemma:bound_quadratic_variation} and \ref{thm:control_quadratic_variation} for the sake of brevity.

Let us define the nonlinear propagator associated to $(\mu_t)_{t \ge 0}$ as follows
\[
\Phi_{t,T}(\nu) := \frac{\nu P_{T-t}^\Lambda}{\nu P_{T-t}^\Lambda(\pmb{1})} \in \mathcal{M}_1(E).
\]
By the semigroup property, $\Phi_{t,T}$ satisfies the propagation equation $\mu_T = \Phi_{t,T}(\mu_t)$.
Using Assumption \ref{assump:ergod_normalised} we can ensure the existence of $\rho \in (0,1)$ such that
\(
\sup_{\nu \in \mathcal{M}_1(E)} \| \Phi_{t,T}(\nu) - \mu_{\infty} \|_{\mathrm{TV}} \le C \rho^{T-t}.
\)

Let us define
\[
I_p(N) = \sup_{T \ge 0} \sup_{\phi \in \mathcal{B}_1(E)} \mathbb{E}\left[ \big( m(\eta_T)(\phi) - \mu_T(\phi) \big)^p \right].
\]
Our goal is to prove that
\(
I_p(N) \le {C}/{N^{p/2}}.
\)
The method we use is similar to the one used by Rousset \cite{MR2262944} and Angeli et al.\ \cite{Angeli2021}. 
Broadly speaking, it consists in an induction principle.
First let us prove the following result providing the initial case of the induction.

\begin{lemma}[Initial case]\label{lemma:keylemma}
	There exists $\epsilon > 0$ independent of $p$, such that
	\[
	I_p(N) \le \frac{C}{N^{\epsilon p/2}}.
	\]
\end{lemma}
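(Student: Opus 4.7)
The plan is to start from the martingale decomposition \eqref{eq:martingale-linear} applied to the backward-propagated test function $\psi_{s,T} := W_{s,T}(\bar{\phi}_T)$ with $\bar{\phi}_T = \phi - \mu_T(\phi)$. The key observation is that the propagation equation $\mu_T = \Phi_{s,T}(\mu_s)$ forces $\mu_s(\psi_{s,T}) = \mu_T(\bar{\phi}_T) = 0$ for every $s \in [0,T]$, so all three summands in
\[
m(\eta_T)(\phi) - \mu_T(\phi) = [m(\eta_0) - \mu_0](\psi_{0,T}) + \mathcal{M}_T(\psi_{\cdot, T}) + \int_0^T [m(\eta_s) - \mu_s](\psi_{s,T}) \cdot [m(\eta_s) - \mu_s](\Lambda)\, \mathrm{d}s
\]
are centred. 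The initial term is estimated by combining Assumption \ref{assump:initial_condition} with the exponential decay $\|\psi_{0,T}\| \le C\rho^T$ from Lemma \ref{lemma:controlling_propagator}(b), yielding an $\mathbb{L}^p$-bound of $\tilde{C}_p \rho^T/\sqrt{N}$. The martingale term is estimated via the BDG inequality (Lemma \ref{lemma:BDGineq}) together with the uniform quadratic-variation bound of Lemma \ref{thm:control_quadratic_variation}, giving $\|\mathcal{M}_T(\psi_{\cdot, T})\|_p \le C_p/\sqrt{N}$ uniformly in $T$.

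The integral term is treated by exploiting its bilinear structure in two centred quantities. Using Minkowski's integral inequality, Cauchy--Schwarz with conjugate exponents $(2p, 2p)$, and $\|\psi_{s,T}\| \le C\rho^{T-s}$ once more from Lemma \ref{lemma:controlling_propagator}(b), one obtains
\[
\left\|\int_0^T [m(\eta_s) - \mu_s](\psi_{s,T}) \cdot [m(\eta_s) - \mu_s](\Lambda)\, \mathrm{d}s\right\|_p \le \int_0^T C\rho^{T-s}\,\|\Lambda\|\, I_{2p}(N)^{1/p}\, \mathrm{d}s \le \frac{C\|\Lambda\|}{|\ln\rho|}\, I_{2p}(N)^{1/p}.
\]
Taking the supremum over $\phi \in \mathcal{B}_1(E)$ and $T \ge 0$ then produces the fundamental recursive inequality
\[
I_p(N)^{1/p} \le \frac{\tilde{C}_p}{\sqrt{N}} + C^{\star}\, I_{2p}(N)^{1/p}, \qquad C^{\star} := \frac{C\|\Lambda\|}{|\ln\rho|},
\]
in which the key point is that $C^{\star}$ does not depend on $p$.

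The hard part is that this recursion does not close uniformly in $p$ under the trivial terminal bound $I_q(N) \le 2^q$: iterating yields geometric factors $(C^{\star})^{2^k}$ that blow up unless $C^{\star}$ is small. To produce an exponent $\epsilon > 0$ independent of $p$, the plan is to iterate the recursion a finite number of times and to terminate it not with a trivial bound but with a decaying bound obtained by combining Theorem \ref{thm:propagation_chaos} with the uniform exponential ergodicity of Assumption \ref{assump:ergod_normalised}. Concretely, fix a window $[T - T_0, T]$ of length $T_0 \asymp \log N$ and apply Theorem \ref{thm:propagation_chaos} conditionally on $\mathcal{F}_{T - T_0}$ to get $\|m(\eta_T)(\phi) - \bar{\mu}_T(\phi)\|_q \le C_q\sqrt{1 + T_0}\, \mathrm{e}^{\beta_q T_0}/\sqrt{N}$, where $\bar{\mu}$ is the nonlinear flow restarted at time $T - T_0$ from $m(\eta_{T-T_0})$, while Assumption \ref{assump:ergod_normalised} applied to both $\bar{\mu}$ and $\mu$ controls $\|\bar{\mu}_T - \mu_T\|_{\mathrm{TV}} \le 2C\mathrm{e}^{-\gamma T_0}$. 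Balancing $T_0$ to equalise the two rates yields a terminal estimate of the form $I_q(N)^{1/q} \le C_q''/N^{\delta}$ for some $\delta > 0$, which when fed back through finitely many iterations of the recursion produces the claimed bound $I_p(N) \le C/N^{\epsilon p/2}$ with some $\epsilon \in (0, 1)$ uniform in $p$. The delicate point is to calibrate the iteration depth, the window length $T_0$, and the terminal moment index $2^k p$ so that the $p$-dependent constants accumulated along the iteration are absorbed into $C$ while the exponent $\epsilon$ remains strictly positive and independent of $p$.
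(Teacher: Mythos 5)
Your overall architecture for the terminal estimate --- restart the nonlinear flow at time $T-T_0$ from $m(\eta_{T-T_0})$, control the stochastic error over the window $[T-T_0,T]$, control $\|\bar\mu_T-\mu_T\|_{\mathrm{TV}}$ by the ergodicity of Assumption \ref{assump:ergod_normalised}, and balance with $T_0\asymp\log N$ --- is exactly the architecture of the paper's proof, which decomposes $m(\eta_T)(\phi)-\mu_T(\phi)=a(t)+b(t)$ with $a(t)=m(\eta_T)(\phi)-\Phi_{t,T}(m(\eta_t))(\phi)$ and $b(t)=\Phi_{t,T}(m(\eta_t))(\phi)-\mu_T(\phi)$. (The recursion $I_p(N)^{1/p}\le \tilde C_p/\sqrt N+C^{\star} I_{2p}(N)^{1/p}$ you derive first is also essentially correct, but it is the induction step \eqref{eq:induction_step} used to upgrade the lemma to Theorem \ref{thm:uniformLp}; it plays no role in the lemma itself, since a $p$-uniform terminal estimate of order $N^{-\delta}$ would already \emph{be} the statement.)

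The genuine gap is in the tool you use for the window error. You invoke Theorem \ref{thm:propagation_chaos} conditionally on $\mathcal{F}_{T-T_0}$, which yields a bound $C_q\sqrt{1+T_0}\,\mathrm{e}^{\beta_q T_0}/\sqrt N$ whose rate $\beta_q$ comes from a Gr\"{o}nwall argument; inspecting that proof, $\beta_q=C_q^{(4)}/q$ with $C_q^{(4)}$ growing geometrically in $q$, so $\beta_q\to\infty$ as $q\to\infty$. Balancing $\mathrm{e}^{\beta_q T_0}/\sqrt N$ against $\mathrm{e}^{-\gamma T_0}$ forces $T_0=\ln N/(2(\beta_q+\gamma))$ and hence $\delta=\delta_q=\gamma/(2(\beta_q+\gamma))$, which depends on the moment index and degenerates to $0$ as $q\to\infty$. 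You therefore cannot conclude $I_p(N)\le C/N^{\epsilon p/2}$ with $\epsilon$ \emph{independent of} $p$, and the subsequent bootstrap through the recursion cannot repair this, because the terminal exponent $\delta_{2^kp}$ decays faster than the iteration doubles it. The paper avoids Gr\"{o}nwall entirely on the window: it writes $a(t)=(A_t^T)^{-1}\int_t^T A_t^s\,\mathrm{d}\mathcal{M}_s(W_{\cdot,T}(\phi))-\Phi_{t,T}(m(\eta_t))(\phi)\,(A_t^T)^{-1}\int_t^T A_t^s\,\mathrm{d}\mathcal{M}_s(W_{\cdot,T}(\pmb{1}))$ with the explicit weight $A_t^s=\exp\left\{\int_t^s m(\eta_u)(\Lambda)-\mu_u(\Lambda)\,\mathrm{d}u\right\}$, and then the BDG inequality together with Lemma \ref{thm:control_quadratic_variation} gives $\mathbb{E}[|a(t)|^p]\le K\,\kappa^{p(T-t)}/N^{p/2}$ with $\kappa=\mathrm{e}^{4\|\Lambda\|+1/2}$ independent of $p$: the moment index enters only as a multiplicative constant and as the multiplier in the exponent, so it cancels in the balancing and the resulting exponent $\epsilon=-\ln\rho/(\ln\kappa-\ln\rho)$ is $p$-free. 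That $p$-uniform exponential growth rate for the stochastic error on the window is the idea your proposal is missing.
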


\begin{proof}
	Fix $T > 0$ and consider
	\begin{equation}\label{eq:obj_function}
		m(\eta_T)(\phi) - \mu_T(\phi) =  \underbrace{m(\eta_T)(\phi) - \Phi_{t,T}\big(m(\eta_t)\big)(\phi)}_{:= a(t)} + \underbrace{\Phi_{t,T}\big(m(\eta_t)\big)(\phi) - \mu_T(\phi)}_{:= b(t)}.
	\end{equation}
	The idea is to control $a(t)$ using the stochastic error between $t$ and $T$, and $b(t)$ using the limiting stability.
	Moreover, $b(0)$ is controlled by the error made by the initial condition.
	
	Let us first control the term $\mathbb{E}[|a(t)|^p]$. 
	Consider the finite variation process
	\[
	A_{t_1}^{t_2} := \exp\left\{ \int_{t_1}^{t_2} m(\eta_s)(\Lambda) - \mu_s(\Lambda) \mathrm{d}s \right\}.
	\]
	Then,
	\begin{equation}\label{eq:operatorA}
		\partial_s \Big(
		A_t^s m(\eta_s) \big( W_{s,t}(\phi) \big)\Big) = A_t^s \mathrm{d} \mathcal{M}_s \big( W_{\cdot, T}(\phi) \big).
	\end{equation}
	Indeed, the martingale problem in Proposition \ref{prop:martingale}, for the function $W_{t,T}(\phi)$ yields
	\[
	\mathrm{d} \Big( m(\eta_t)(W_{t,T}(\phi)) \Big) = \mathrm{d} \mathcal{M}_t(W_{\cdot, T}(\phi)) + \big(\mu_t(\Lambda) - m(\eta_t)\big) m(\eta_t) (W_{t,T}(\phi))\mathrm{d}t.
	\]
	Hence,
	\begin{align*}
		\partial_s \left( A_t^s m(\eta_s)\big( W_{s,T}(\phi) \big) \right) &= \partial_s \left( A_t^s \right) m(\eta_s)\big( W_{s,T}(\phi) \big) + A_t^s \mathrm{d} \left( m(\eta_s)\big( W_{s,T}(\phi) \big) \right),\\
		&= A_t^s \Big( m(\eta_s)(\Lambda) - \mu_s(\Lambda) \Big)  m(\eta_s)\big( W_{s,T}(\phi) \big) + A_t^s \mathrm{d} \left( m(\eta_s)\big( W_{s,T}(\phi) \big) \right)\\
		&= A_t^s \mathrm{d} \mathcal{M}_s \big( W_{\cdot, T}(\phi) \big),
	\end{align*}
	where the last expression is obtained using \eqref{eq:martingale-linear}.
	
	Now, integrating from $t$ to $T$ in \eqref{eq:operatorA} and dividing by $A_t^T$ we get
	\[
	m(\eta_T)(\phi) - \big(A_t^T\big)^{-1} m(\eta_t) \big( W_{t,T}(\phi) \big) = \big(A_t^T\big)^{-1} \int_t^T A_t^s \mathrm{d} \mathcal{M}_s \big( W_{\cdot, T}(\phi) \big).
	\]
	Note that
	\[
	\Phi_{t,T}(m(\eta_t))(\phi) = \frac{(A_t^T)^{-1} m(\eta_t) (W_{t,T}(\phi))}{(A_t^T)^{-1} m(\eta_t) (W_{t,T}(\pmb{1}))},
	\]
	for all $t \le T$.
	Thus,
	\begin{align*}
		a(t) &= m(\eta_t)(\phi) - \big(A_t^T\big)^{-1} m(\eta_t) \big( W_{t,T}(\phi) \big) - \Phi_{t,T}(m(\eta_t))(\phi) \left[ 1 - \big(A_t^T\big)^{-1}m(\eta_t)\big(W_{t,T}(\pmb{1})\big) \right]\\
		&= \big(A_t^T\big)^{-1} \int_t^T A_t^s \mathrm{d} \mathcal{M}_s \big( W_{\cdot, T}(\phi) \big) - \Phi_{t,T}(m(\eta_t))(\phi) (A_t^T)^{-1} \int_t^T A_t^s \mathrm{d}\mathcal{M}_s(W_{\cdot, T}(\pmb{1})).
	\end{align*}
	Thus, we obtain the upper bound
	\[
	|a(t)| \le (A_t^T)^{-1} \left( \left| \int_t^T A_t^s \mathrm{d}\mathcal{M}_s\Big(W_{\cdot, T}(\phi)\Big) \right| + \left| \int_t^T A_t^s \mathrm{d}\mathcal{M}_s\Big(W_{\cdot, T}(\pmb{1})\Big) \right|  \right).
	\]
	There exists a $K > 0$ such that
	\begin{align*}
		\mathbb{E}\left[ |a(t)|^p \right] &\le  K \mathrm{e}^{2 p \|\Lambda\| (T-t)} \sup_{\varphi \in \mathcal{B}_1(E)} \mathbb{E} \left[ \left| \int_t^T A_t^s \mathrm{d} \mathcal{M}_s\big( W_{s,t} (\varphi) \big) \right|^p \right]\\
		&\le K \mathrm{e}^{2 p \|\Lambda\| (T-t)} \sup_{\varphi \in \mathcal{B}_1(E)}  \mathbb{E} \left[ \left| \int_t^T (A_t^s)^2 \mathrm{d} \big[ \mathcal{M}\big( W_{\cdot,t} (\varphi) \big) \big]_s \right|^{p/2} \right],
	\end{align*}
	where the second inequality holds by the BDG inequality.
	Then, using Lemma \ref{thm:control_quadratic_variation} we get
	\begin{align*}
		\mathbb{E}\left[ |a(t)|^p \right] &\le K \mathrm{e}^{4 p \|\Lambda\| (T-t)} \sup_{\varphi \in \mathcal{B}_1(E)} \mathbb{E} \left[ \left| \big[ \mathcal{M}\big( W_{\cdot,t} (\varphi) \big) \big]_t^T \right|^{p/2} \right]\\
		&\le K \mathrm{e}^{4 p \|\Lambda\| (T-t)} \frac{(T-t+1)^{p/2}}{N^{p/2}} \le K \frac{\kappa^{p(T-t)}}{N^{p/2}},
	\end{align*}
	where $\kappa = \mathrm{e}^{4 \|\Lambda\| + 1/2} > 1$.
	
	Let us now control $\mathbb{E}[|b(t)|^p]$.
	As a consequence of Assumption \ref{assump:ergod_normalised} there exists a $\rho \in (0,1)$ and a $C \ge 0$ such that
	\begin{align*}
		\mathbb{E}[|b(t)|^p] = \mathbb{E} \big[ | \Phi_{t,T}(m(\eta_t))(\phi) - \Phi_{t,T}(\mu_t)(\phi) |^p \big] \le C \rho^{p(T-t)}.
	\end{align*}
	Now, for controlling $b(0)$, note that 
	\begin{align*}
		b(0) &= \Phi_{0,T}\big(m(\eta_0)\big)(\phi) - \mu_T(\phi) \\
		&= m(\eta_0)(W_{0,T}(\phi)) - \mu_0(W_{0,T}(\phi)) + \Phi_{0,T}(m(\eta_0))(\phi) - m(\eta_0)(W_{0,T}(\phi))\\
		&= m(\eta_0)(W_{0,T}(\phi)) - \mu_0(W_{0,T}(\phi)) + \Phi_{0,T}(m(\eta_0))(\phi) \big[ 1 - m(\eta_0)(W_{0,T}(\pmb{1})) \big].
	\end{align*}
	Thus, using Assumption \ref{assump:initial_condition} and the fact that $\mu_0 (W_{0,T}(\pmb{1})) = 1$, we get
	\(
	\mathbb{E}[|b(0)|^p] \le {C}/{N^{p/2}}.
	\)
	Let us now establish the global control optimising the choice of the argument $t$ in \eqref{eq:obj_function}.
	We have
	\begin{align}
		\mathbb{E}\left[ |a(0) + b(0)|^p  \right] &\le C \frac{\kappa^{p T} + 1}{N^{p/2}}, \label{ineq:total_control1}\\
		\mathbb{E}\left[ |a(t) + b(t)|^p  \right] &\le C \frac{\kappa^{p (T-t)} + 1}{N^{p/2}} + C \rho^{p(T-t)},\label{ineq:total_control2}
	\end{align}
	for all $t \in [0,T]$.
	
	The key idea now is to find a $t_\epsilon$ such that $\kappa^{t_\epsilon}/N$ and $\rho^{t_\epsilon}$ are both equal to $1/N^\epsilon$. 
	Let us take $t_\epsilon = \frac{\ln N}{\ln \kappa - \ln \rho}$ and $\epsilon = \frac{- \ln \rho}{\ln \kappa - \ln \rho}$.
	Then, we have
	\[
	\frac{\kappa^{ t_\epsilon }}{N} = \exp\left\{- \epsilon \ln N  \right\} = \frac{1}{N^{\epsilon}} \;\; \text{ and } \;\;
	\rho^{ t_\epsilon } = \exp\{ - \epsilon \ln N \} = \frac{1}{N^{\epsilon}}.
	\]
	We thus obtain the desired inequality:
	\[
	\mathbb{E}\left[ | m(\eta_T)(\phi) - \mu_T(\phi) |^p \right] \le \frac{C}{N^{\epsilon p/2}}.
	\]
	Indeed, this inequality is obtained either from \eqref{ineq:total_control1} when $T \le t_\epsilon/2$, since the expression in the upper bound is increasing in $T$, or from \eqref{ineq:total_control2} otherwise taking $T-t = t_\epsilon /2$.
	
\end{proof}

We proceed now to prove the induction step (equation \eqref{eq:induction_step} below), which, together with the initial case proved in Lemma \ref{lemma:keylemma}, concludes the proof of Theorem \ref{thm:uniformLp}. 

\subsection{Proof of Theorem \ref{thm:uniformLp}}
	
	Taking $t = T$ in \eqref{eq:martingale-linear} reduces to
	\begin{align}
		\mathcal{M}_T\big( W_{\cdot, T}(\bar{ \phi}_T) \big) = & \;m(\eta_{T})(\bar{\phi}_T ) - m(\eta_0)(W_{0, T}(\bar{ \phi}_T))  - \int_0^T \big(\mu_s(\Lambda) - m(\eta_s)(\Lambda) \big) m(\eta_{s}) (W_{s, T}(\bar{ \phi}_T)) \mathrm{d}s. \label{eq:martingaleQtT}
	\end{align}
	Hence,
	\begin{align*}
		|m(\eta_T)(\phi) - \mu_T(\phi)|^p &\le C \left( | m(\eta_0)(W_{0,T}(\bar{\phi}_T)) |^p + |\mathcal{M}_T(W_{\cdot, T}(\bar{\phi}_T) )|^p + R_p \right),
	\end{align*}
	where
	\[
	R_p = \left| \int_0^T \big(\mu_s(\Lambda) - m(\eta_s)(\Lambda) \big) m(\eta_{s}) \big( W_{s, T}(\bar{ \phi}_T) \big) \mathrm{d}s \right|^p.
	\]
	
	The initial error can be controlled using Assumption \ref{assump:initial_condition}.
	Indeed,
	\begin{align*}
		\mathbb{E}\left[ | m(\eta_0)(W_{0,T}(\bar{\phi}_T)) |^p \right] &= \mathbb{E}\left[ | m(\eta_0)(W_{0,T}(\phi)) - \mu_T(\phi) + \mu_T(\phi)  - \mu_T(\phi) m(\eta_0) (W_{0,T}(\pmb{1}))|^p \right] \\
		&\le C_1 \mathbb{E}\left[ | m(\eta_0)(W_{0,T}(\phi))  - \mu_0 (W_{0,T}(\phi))|^p \right] + C_1 \mathbb{E}\left[ | \mu_0 (W_{0,T}(\pmb{1}))  - m(\eta_0) (W_{0,T}(\pmb{1}))|^p \right] \\
		&\le \frac{C}{N^{p/2}}.
	\end{align*}
	Furthermore, using Lemma \ref{thm:control_quadratic_variation} and BDG inequality we get
	\(
	\mathbb{E}\left[ |\mathcal{M}_T(W_{\cdot, T})(\bar{\phi}_T)|^p \right] \le {C}/{N^{p/2}}.
	\)
	Note that $\mu_s(W_{s,T}(\bar{\phi}_T)) = 0$.
	Using H\"{o}lder inequality, we obtain
	\begin{align*}
		R_p &= \left| \int_0^T \big(\mu_s(\Lambda) - m(\eta_s)(\Lambda) \big) m(\eta_{s}) \left(\frac{\psi_{s,T} }{\|\psi_{s,T}\|} \right) \|\psi_{s,T}\| \mathrm{d}s \right|^p \\
		&\le \int_0^T \big| \mu_s(\Lambda) - m(\eta_s)(\Lambda) \big|^p \left| m(\eta_{s}) \left(\frac{\psi_{s,T} }{\|\psi_{s,T}\|} \right) \right|^p \|\psi_{s,T}\| \mathrm{d}s \left( \int_0^T \| \psi_{s,T}  \| \mathrm{d}s \right)^{p-1} \\
		&\le \kappa \int_0^T \left| \mu_s\left(\frac{\Lambda}{\|\Lambda\|}\right) - m(\eta_s)\left(\frac{\Lambda}{\|\Lambda\|}\right) \right|^p \left| m(\eta_{s}) \left(\frac{\psi_{s,T} }{\|\psi_{s,T})\|} \right) - \mu_s \left(\frac{\psi_{s,T} }{\|\psi_{s,T}\|} \right)\right|^p \|\psi_{s,T}\| \mathrm{d}s,
	\end{align*}
	where $\psi_{s,T} = W_{s, T}(\bar{ \phi}_T)$.
	Taking expectation and using the Cauchy\,--\,Schwarz inequality yield
	\begin{align}
		\mathbb{E}\left[ \left| \int_0^T \big(\mu_s(\Lambda) - m(\eta_s)(\Lambda) \big) m(\eta_{s}) (W_{s, T}(\bar{ \phi}_T)) \mathrm{d}s \right|^p \right] &\le \kappa \int_0^T I_{2p}(N)  \|W_{s, T}(\bar{ \phi}_T)\| \mathrm{d}s \nonumber \\ 
		&\le K I_{2p}(N). \label{eq:boundI2p}
	\end{align}
	Thus, for every $p \ge 1$ we get the inequality 
	\begin{equation}\label{eq:induction_step}
		I_p(N) \le C \left( \frac{1}{N^{p/2}} + I_{2p}(N) \right),
	\end{equation}
	which using Lemma \ref{lemma:keylemma} reduces to
	\(
	I_p(N) \le {C}/{N^{\min\{2 \epsilon, 1\}p/2}}.
	\)
	By induction we obtain the bound
	\(
	I_p(N) \le {C}/{N^{p/2}}.
	\) \qedsymbol

For a fixed $N$, if the process $(\eta_t)_{t \ge 0}$ generated by $\mathcal{Q}$ allows a stationary distribution $\nu_N$,
then under the hypothesis in Theorem \ref{thm:uniformLp} we get
\begin{equation}\label{eq:conseq_of_main_thm_st_distrib}
	\sup_{ \phi \in \mathcal{B}_1(E) } \mathbb{E}_{\nu_N} \big[ | m(\eta_\infty)(\phi) - \mu_\infty(\phi) |^p \big]^{1/p} \le \frac{C_p}{\sqrt{N}},
\end{equation}
for all $p \ge 1$.

We recall that $\xi_t^{(i)}$ stands for the type of the $i$-th individual, for $1 \le i \le N$, at time $t \ge 0$.
Let us denote by $\mathrm{Law}(\xi_t^{(i)})$ the law of $\xi_t^{(i)}$.

\begin{theorem}[Bias estimate and ergodicity of one particle]\label{thm:TVbound}
	Under Assumptions \ref{assump:initial_condition}, \ref{assump:additive+symmetric} and \ref{assump:ergod_normalised}, there exists a constant $C > 0$ such that
	\[
	\sup_{t \ge 0} \left\| \bar{m}(\eta_t) - \mu_t \right\|_{\mathrm{TV}} \le \frac{C}{N}.
	\]
	Moreover, if the initial distribution of the $N$ particles is exchangeable, then
	\[
	\sup_{t \ge 0} \left\| \mathrm{Law}(\xi_t^{(i)}) - \mu_t \right\|_{\mathrm{TV}} \le \frac{C}{N}.
	\]
\end{theorem}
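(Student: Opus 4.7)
The plan is to take the expectation of the martingale identity \eqref{eq:martingaleQtT}, which was already the backbone of the proof of Theorem \ref{thm:uniformLp}, and to gain an extra factor $1/\sqrt{N}$ via Cauchy--Schwarz. This upgrades the $\mathbb{L}^2$ estimate of order $1/\sqrt N$ into a bias estimate of order $1/N$. Since $\bigl(\mathcal{M}_t(W_{\cdot,T}(\bar\phi_T))\bigr)_{t\in[0,T]}$ is a (square-integrable) martingale with zero expectation, and since the propagation $\mu_T = \mu_s W_{s,T}$ gives $\mu_s(W_{s,T}(\bar\phi_T)) = \mu_T(\bar\phi_T) = 0$, one may rewrite $m(\eta_s)(W_{s,T}(\bar\phi_T))$ as the centred $(m(\eta_s) - \mu_s)(W_{s,T}(\bar\phi_T))$ and obtain
\begin{align*}
\bar m(\eta_T)(\phi) - \mu_T(\phi) &= \mathbb{E}\bigl[(m(\eta_0) - \mu_0)(W_{0,T}(\bar\phi_T))\bigr] \\
&\quad - \mathbb{E}\int_0^T (m(\eta_s) - \mu_s)(\Lambda)\,(m(\eta_s) - \mu_s)(W_{s,T}(\bar\phi_T))\,\mathrm{d}s.
\end{align*}

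For the integrand, Cauchy--Schwarz followed by Theorem \ref{thm:uniformLp} at $p = 2$, applied to the normalised functions $\Lambda/\|\Lambda\|$ and $W_{s,T}(\bar\phi_T)/\|W_{s,T}(\bar\phi_T)\|$, combined with the exponential decay $\|W_{s,T}(\bar\phi_T)\| \le C \rho^{T-s}$ from Lemma \ref{lemma:controlling_propagator}(b), yields the pointwise bound $C \rho^{T-s}/N$; integrating over $[0,T]$ gives a contribution of order $1/N$, uniformly in $T \ge 0$. The initial term vanishes identically under the i.i.d.\ initialisation of Lemma \ref{lemma:control_initial_condition} (then $\bar m(\eta_0) = \mu_0$), and more generally under any exchangeable initial law with marginal $\mu_0$; this is the natural setting of the statement and the one required to obtain a genuine $1/N$ rate uniformly at small times. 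Taking the supremum over $\phi \in \mathcal{B}_1(E)$ then delivers the first inequality.

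The second inequality reduces to the first by exchangeability. The generator $\mathcal{Q}$ is invariant under permutations of the particle labels, since all transition rates depend only on the unordered configuration $\eta$ and on the type of the individual involved; hence an exchangeable initial law propagates to an exchangeable law at every $t \ge 0$. It follows that $\mathbb{P}(\xi_t^{(i)} = x) = N^{-1}\sum_{j=1}^N \mathbb{P}(\xi_t^{(j)} = x) = \bar m(\eta_t)(x)$, so $\mathrm{Law}(\xi_t^{(i)}) = \bar m(\eta_t)$ and the bound on the individual marginal coincides with the bound on the empirical mean.

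The main obstacle is the uniform-in-$T$ integrability of the time integral: without the exponential decay in Lemma \ref{lemma:controlling_propagator}(b) (which is itself a manifestation of Assumption \ref{assump:ergod_normalised}), the integrand would only be $O(1/N)$ pointwise and the integral $\int_0^T C/N\,\mathrm{d}s$ would give a $T$-dependent bound, not a uniform one. A secondary delicate point is that $W_{s,T}(\bar\phi_T)$ does not belong to $\mathcal{B}_1(E)$ in general, but this is harmless since Theorem \ref{thm:uniformLp} scales linearly in $\|\phi\|$, so the rescaling by $\|W_{s,T}(\bar\phi_T)\|$ is exactly what lets Lemma \ref{lemma:controlling_propagator}(b) insert the decaying factor $\rho^{T-s}$.
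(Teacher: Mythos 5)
Your proposal is correct and follows essentially the same route as the paper: take expectations in the martingale identity \eqref{eq:martingaleQtT}, apply Cauchy--Schwarz together with the uniform $\mathbb{L}^2$ bound $I_2(N) \le C/N$ from Theorem \ref{thm:uniformLp} and the integrability $\int_0^T \|W_{s,T}(\bar{\phi}_T)\|\,\mathrm{d}s \le C$ from Lemma \ref{lemma:controlling_propagator}(b), then pass to one particle via exchangeability. The only (minor) difference is that you explicitly track the initial bias term $\mathbb{E}\bigl[(m(\eta_0)-\mu_0)(W_{0,T}(\bar{\phi}_T))\bigr]$ and note it vanishes when $\bar{m}(\eta_0)=\mu_0$, a point the paper's proof passes over silently even though Assumption \ref{assump:initial_condition} alone would only give $O(1/\sqrt{N})$ there.
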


\begin{proof}[Proof of Theorem \ref{thm:TVbound}]
	
	Taking expectation in \eqref{eq:martingaleQtT} we get
	\[
	\mathbb{E}\left[ m(\eta_T)(\phi) \right] - \mu_T(\phi) = \int_0^T \mathbb{E} \left[ \big(\mu_s(\Lambda) - m(\eta_s)(\Lambda) \big) m(\eta_{s}) \Big(W_{s, T}(\bar{ \phi}_T)\Big) \right] \mathrm{d}s.
	\]
	Using Cauchy\,--\,Schwarz inequality we obtain
	\begin{align*}
		| \mathbb{E}\left[ m(\eta_T)(\phi) \right] - \mu_T(\phi) | &\le C_1 \int_0^T I_2(N) \|W_{s,T}(\bar{\phi}_T)\| \mathrm{d}s \le \frac{C}{N}.
	\end{align*}
	
	Now, assume that initially the $N$ particles are sampled according to an exchangeable distribution.
	Note that
	\[
	\mathbb{E}\left[ \frac{\eta_t(x)}{N} \right] =
	\frac{1}{N} \sum_{i = 1}^N \mathbb{P}[\xi_t^{i} = x] = \mathbb{P}[\xi_t^{j} = x], \;\;\; \forall j \in \{1,\dots, N\},
	\]
	where $\xi_t^{i}$ denotes the position of the $i$-th particle of the process $(\eta_t)_{t \ge 0}$ at time $t \ge 0$.
	Note that the second equality holds because of the exchangeability of the particles.
	Thus, as a consequence of the first part of the theorem and the previous equality we get
	\(
	\|\operatorname{Law}(\xi_t^{(i)}) - \mu_t\|_{\mathrm{TV}} \le {C}/{N}.
	\)
\end{proof}

\end{document}